\numberwithin{equation}{section}
\newcommand{\bq}{\begin{equation}}
\newcommand{\eq}{\end{equation}}
\newcommand{\bqa}{\begin{eqnarray*}}
\newcommand{\eqa}{\end{eqnarray*}}
\newcommand{\Rr}{\mathbb{R}}
\newcommand{\Zz}{\mathbb{Z}}
\newcommand{\Nn}{\mathbb{N}}
\newcommand{\T}{\mathbb{T}}
\newcommand{\la}{\label}
\newcommand{\na}{\nabla}
\newcommand{\be}{\begin{equation}}
\newcommand{\ee}{\end{equation}}
\newcommand{\ba}{\begin{array}{l}}
\newcommand{\ea}{\end{array}}
\theoremstyle{plain}
\newtheorem{theo}{Theorem}[section]
\newtheorem{prop}[theo]{Proposition}
\newtheorem{lemm}[theo]{Lemma}
\theoremstyle{definition}
\newtheorem{rema}[theo]{Remark}
\DeclareMathOperator{\dive}{div}
\DeclareMathOperator{\dist}{dist}
\DeclareMathOperator{\curl}{curl}
\DeclareMathOperator{\supp}{supp}
\DeclareSymbolFont{pletters}{OT1}{cmr}{m}{sl}
\DeclareMathSymbol{s}{\mathalpha}{pletters}{`s}
\def\tt{\theta}
\def\eps{\varepsilon}
\def\na{\nabla}
\def\la{\left\lvert}
\def\le{\leq}
\def\mez{\frac{1}{2}}
\def\ra{\right\rvert}
\def\tdm{\frac{3}{2}}
\def\P{\mathbb P}
\def\cD{\mathcal{D}}
\def\p{\partial}
\def\RR{\mathbb{R}}
\newcommand{\hk}{\hspace*{.15in}}
\begin{document}
\title{On global stability of optimal rearrangement maps }

\author{
Huy Q. Nguyen\footnotemark[1] \and 
Toan T. Nguyen\footnotemark[2]
}

\maketitle

\renewcommand{\thefootnote}{\fnsymbol{footnote}}

\footnotetext[1]{Department of Mathematics, Brown University, Providence, RI 02912. Email: hnguyen@math.brown.edu }

\footnotetext[2]{Department of Mathematics, Penn State University, State College, PA 16803. Email: nguyen@math.psu.edu}

\maketitle

\begin{abstract}

We study the nonlocal vectorial transport equation $\p_ty+ (\P y \cdot \na) y=0$ on bounded domains of $\Rr^d$ where $\P$ denotes the Leray projector. This equation was introduced to obtain the unique optimal rearrangement of  a given map $y_0$ as the infinite time limit of the solution with initial data $y_0$ (\cite{AHT, Macthesis, Brenier09}). We rigorously justify this expectation by proving that for initial maps $y_0$ sufficiently close to  maps with strictly convex potential, the solutions $y$ are global in time and converge exponentially fast to the optimal rearrangement of $y_0$ as time tends to infinity.


\end{abstract}

\section{Introduction} 
 Let $\Omega$ be a bounded domain in $\Rr^d$ equiped with the Lebesgue measure. Two $L^2$ maps $y_1, y_2:\Omega\to \Rr^d$ are rearrangements of each other if they define the same image measure of the Lebesgue measure, i.e.
\[
\int_\Omega f(y_1(x))dx=\int_\Omega f(y_2(x))dx
\]
for all compactly supported continuous function $f:\Rr^d\to \Rr$. A celebrated theorem due to Brenier \cite{Brenier91} asserts that for each $L^2$ map $y_0: \Omega\to \Rr^d$ there exists a unique rearrangement  $y^*$ with {\it convex potential}, i.e. $y^*=\na p^*$ for some convex function $p^*$. Moreover,  among all  possible rearrangements of $y_0$, $y^*$ minimizes the quadratic cost function 
\[
\int_{\Omega}|y(x)-x|^2dx.
\]
  We shall refer to $y^*$ as the {\it optimal rearrangement of $y_0$}. Finding the unique optimal rearrangement $y^*$ for a given map $y_0$ is thus among the main concerns in  optimal transport theory.  As an attempt to get the optimal rearrangement $y^*$ of $y_0$ as an {\it equilibrium state in the infinite time} of a dynamical system that could be efficiently solved by computer,  Angenent, Haker, and Tannenbaum \cite{AHT} (see also McCann \cite{Macthesis, Mac1} and Brenier \cite{Brenier09}) proposed the following nonlocal vectorial  transport model (AHT)
\bq\label{AHT}
\begin{aligned}
\p_ty+u\cdot \nabla y&=0,\\
u&=\P y,
\end{aligned}
\eq
where $y = y(x,t) \in \RR^d$, $x\in \Omega \subset \RR^d$, $t\ge 0$, and  $\P$ denotes the classical Leray projector onto the space of divergence-free vector fields. Throughout the paper, we take either $\Omega = \T^d$ (periodic domain) or a bounded domain in $\RR^d$, $d\ge 2$ with smooth boundary.  The Leray projector $u = \P y$ is defined as follows. For a given map $y:\Omega\to \Rr^d$, we construct the potential $p$ that solves 
\begin{equation}\label{def-P} 
\left\{\begin{aligned}\Delta p &= \nabla \cdot y  \qquad \mbox{in}\quad \Omega 
\\
\frac{\partial p}{\partial n} & = y \cdot n  \qquad \mbox{on}\quad \partial \Omega\end{aligned}
\right.\end{equation}
where $n$ is the unit outward normal to $\p\Omega$. Then we define 
\[
\P y = y - \nabla p.
\]
 As a consequence of the definition, the velocity $u=\P y$ is tangent to the boundary,
\begin{equation}\label{BCs} 
u\cdot n = 0  \qquad \mbox{on} \quad \partial\Omega.\end{equation}  
When $\Omega = \T^d$, the Leray projector $\P =(\P_{ij})$ is simply a Fourier multiplier matrix of order $0$ with 
\[
\P_{ij} (\xi)= 1 - \frac{\xi_i\xi_j}{|\xi|^2},\quad \xi \in \mathbb{Z}^d.
\]


%
Interestingly, the AHT model \eqref{AHT} can also be obtained as the zero inertial limit of generalized (damped) Euler-Boussinesq equations in convection theory  \cite{Brenier09, Bre1}.  In addition, by specifying  $y(x)=(0, \rho(x))$,  \eqref{AHT} reduces to the incompressible porous media (IPM) equations 
\bq\label{IPM}
\begin{cases}
\p_t \rho+(u\cdot\na)\rho =0,\quad x\in \Omega\subset \Rr^2,\\
u+\na p= (0, \rho)^T.
\end{cases}
\eq
Here $\rho$ plays the role of fluid density. Stability of the special solution $\rho_*(x_1, x_2)=x_2$ of \eqref{IPM} has been proved in \cite{Tarek} for $\Omega=\Rr^2,~\T^2$, and in \cite{CCL} for $\Omega=\T \times (-l, l)$ which posses two horizontal boundaries. The presence of boundaries, though only flat boundaries, makes the proof in \cite{CCL} more involved. 

Following \cite{Brenier09} let us explain why \eqref{AHT} is expected to capture the optimal rearrangement of initial maps as steady states in infinite time. First, since the velocity $u$ is divergence-free and tangent to the boundary, we have
\[
\frac{d}{dt}\int_\Omega f(y(x, t))dx=0\quad\forall t>0
\]
for any  compactly supported continuous function $f:\Rr^d\to \Rr$. Integrating this in time we obtain that each $y(t)$, $t>0$ is a rearrangement of $y(0)$. 
Second, it is readily checked that the balance law 
$$ \frac{d}{dt} \int_\Omega \frac12 |y-x|^2 \; dx  = - \int_\Omega |u|^2\; dx$$
holds. In particular, steady states must be gradients since their Leray projections vanish. Conversely, all gradients are clearly steady states of \eqref{AHT}. Now if $y$ is global and the infinite-time limit $y_\infty$ of $y$ exists (in a sufficiently strong topology) then the integral $\int_0^\infty \int_\Omega |u|^2dx dt$ is finite. Consequently, $u$ vanishes as $t\to \infty$ and thus $y_\infty$ must be a gradient, $y_\infty=\na p_\infty$. If we have  in addition that $p_\infty$ is a convex function, then coupling with the fact that $y_\infty$ is a rearrangement of $y(0)$ we conclude by virtue of the aforementioned theorem of Bernier that $y_\infty$ is the unique optimal rearrangement of $y(0)$. The remaining issues in the above argument are global existence and long time behavior for \eqref{AHT}.  On the other hand, the objects that we expect \eqref{AHT} to capture in infinite time are maps with convex potential. A natural problem then is: 
\[
\text{\it Are maps with convex potential globally stable?} 
\]
Our  goal in the present paper is to prove that maps with {\it strictly convex} potential are globally stable. Precisely, our main theorem reads as follows.
\begin{theo}\label{theo:global}
Let $s>1+\frac{d}{2}$ be an integer with $d\ge 2$. Let $\Omega$ be a $C^\infty$ bounded domain in $\Rr^d$.  Consider $y_*=\nabla p_*$ for some strictly convex function $p_*:\Omega\to \Rr$ whose Hessian  satisfies 
\bq\label{convexity}
\na^2 p_*(x)\ge \theta_0\text{Id}\quad\forall x\in \Omega,\quad\tt_0>0.
\eq
Then, there exists a small  positive number $\eps$ depending only on $\theta_0$ and $\Vert y_*\Vert_{H^{s+1}(\Omega)}$ such that for all $y_0\in H^s(\Omega)$ with $\Vert y_0-y_*\Vert_{H^s(\Omega)}\le \eps$,  problem \eqref{AHT}  has a unique global solution $y$. In addition, there is a positive constant $C$ depending only on $\theta_0$ and $\Vert y_*\Vert_{H^{s+1}(\Omega)}$ so that 
\bq\label{stab:est}
\Vert y(t)-y_*\Vert_{H^s(\Omega)}\le C\Vert y_0-y_*\Vert_{H^s(\Omega)}
\eq
and
\bq\label{damping:u}
\Vert\P y(t)\Vert_{H^s(\Omega)}\le C\Vert y_0-y_*\Vert_{H^s(\Omega)}e^{-\frac{\theta_0}{C} t}
\eq
for all $t\ge 0$. Moreover, there exists a strictly convex function $p_\infty:\Omega\to \Rr$ such that
\bq\label{relax}
 \| y(t)-\nabla p_\infty\|_{H^{s-1}(\Omega)}\le Ce^{-\frac{\tt_0t}{C}}\quad\forall t\ge 0.
\eq
In particular,  $\na p_\infty$ is the optimal rearrangement of $y_0$.
\end{theo}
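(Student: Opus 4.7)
I would set $w = y - y_*$; since $y_* = \na p_*$, $\P y_* = 0$, so $u := \P y = \P w$ and \eqref{AHT} becomes
\[\p_t w + (u \cdot \na)w + (\na^2 p_*)\, u = 0, \qquad u = \P w.\]
Hodge decompose $w = u + \na \tilde q$, where $\tilde q = p - p_*$ satisfies the Neumann problem $\Delta \tilde q = \na\cdot w$, $\p_n \tilde q = w\cdot n$, so elliptic regularity gives $\lA \tilde q\rA_{H^{s+1}} \les \lA w\rA_{H^s}$. Local $H^s$ well-posedness follows from standard quasilinear transport theory with $\P$ acting as a bounded zeroth-order operator on $H^s$; the continuation criterion is controlled by $\int \lA \na u\rA_{L^\infty}\,dt$.

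\textbf{Coercive energy estimate and bootstrap.} The heart of the argument is a coupled pair of $H^s$ estimates run under the bootstrap hypothesis $\lA w(t)\rA_{H^s} \le 2\eps$. Projecting the equation onto the divergence-free subspace gives
\[\p_t u + \P[(u\cdot\na)u] + \P[(\na^2 p_*)\,u] + \P[(\na^2 \tilde q)\,u] = 0.\]
Testing in $H^s$ and using (a) the cancellation $\int u\cdot(u\cdot\na)u = 0$ (from $\dive u = 0$ and $u\cdot n = 0$), (b) the pointwise coercivity $v\cdot(\na^2 p_*)v \ge \tt_0 |v|^2$, (c) Kato-Ponce commutator estimates for $[\p^\alpha,\P]$ and $[\p^\alpha,\na^2 p_*]$ with constants depending on $\lA y_*\rA_{H^{s+1}}$, and (d) smallness of the feedback $\P[(\na^2 \tilde q)\,u]$ via $\lA\na^2 \tilde q\rA_{L^\infty} \les \eps$, I aim for
\[\frac{d}{dt}\lA u\rA_{H^s}^2 + 2\tt_0 \lA u\rA_{H^s}^2 \le C\eps\,\lA u\rA_{H^s}^2,\]
yielding the exponential decay \eqref{damping:u}. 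Separately, the transport cancellation at each order produces
\[\frac{d}{dt}\lA w\rA_{H^s}^2 \le C\lA u\rA_{H^s}\bigl(\lA w\rA_{H^s}^2 + \lA w\rA_{H^s}\bigr),\]
and Gronwall combined with $\lA u\rA_{L^1_t H^s} \les \eps$ bounds $\lA w(t)\rA_{H^s} \le (1 + C\eps)\lA w_0\rA_{H^s}$, closing the bootstrap and giving \eqref{stab:est}.

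\textbf{Relaxation and Brenier identification.} The exponential decay of $\lA u\rA_{H^s}$ and the equation imply $\lA \p_t w(t)\rA_{H^{s-1}} \les e^{-\tt_0 t/C}$, so $w(t) \to w_\infty$ exponentially in $H^{s-1}$. Combined with the uniform $H^s$ bound, Banach-Alaoglu yields $w_\infty \in H^s$ with $\lA w_\infty\rA_{H^s}\le C\eps$. Setting $y_\infty = y_* + w_\infty$, continuity of $\P$ on $H^{s-1}$ gives $\P y_\infty = 0$, whence $y_\infty = \na p_\infty$ for some $p_\infty$. Sobolev embedding (valid because $s > 1 + \frac{d}{2}$, so $H^{s-1}\hookrightarrow L^\infty$) gives $\lA \na^2 p_\infty - \na^2 p_*\rA_{L^\infty} = \lA\na w_\infty\rA_{L^\infty}\les \eps$, so $\na^2 p_\infty \ge \frac{\tt_0}{2}\mathrm{Id}$: $p_\infty$ is strictly convex. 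Each $y(t)$ is a rearrangement of $y_0$ since its Lagrangian flow is measure-preserving ($\dive u = 0$, $u\cdot n|_{\p\Omega} = 0$); passing to the $L^2$ limit, $\na p_\infty$ is a rearrangement of $y_0$ with strictly convex potential, and by Brenier's theorem it is the unique optimal rearrangement of $y_0$, which gives \eqref{relax}.

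\textbf{Main obstacle.} The main technical difficulty is the clean $H^s$ dissipation inequality for $u$ with the coefficient $2\tt_0$. On a bounded curved domain, $\P$ fails to commute with spatial derivatives, and integrating by parts at higher order produces boundary contributions that do not automatically vanish despite $u\cdot n = 0$. Moreover, testing the equation for $w$ directly generates a Hodge cross-term $\int \p^\alpha \na \tilde q \cdot (\na^2 p_*) \p^\alpha u$ with no a priori sign, which cannot be absorbed without introducing a spurious $\lA w\rA_{H^s}^2$ term destroying global boundedness. Working with the equation for $u$ rather than $w$ circumvents the cross-term but transfers the difficulty to the commutators $[\p^\alpha,\P]$, which must be controlled through a careful tangential/normal decomposition of derivatives respecting $u\cdot n = 0$, together with Kato-Ponce estimates built on the fixed-coefficient bound $\lA y_*\rA_{H^{s+1}}$.
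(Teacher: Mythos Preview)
Your overall architecture (bootstrap, exponential $H^s$ decay of $u=\P w$ driven by strict convexity, $L^1_t$ integrability of $\p_t w$ in $H^{s-1}$ producing the limit $\na p_\infty$, Brenier identification) matches the paper, and you correctly flag $[\p^\alpha,\P]$ on a bounded domain as the central obstruction.

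There is, however, a genuine derivative-count gap in your treatment of the nonlinearity. Splitting $\P[(u\cdot\na)w] = \P[(u\cdot\na)u] + \P[(\na^2\tilde q)\,u]$ loses a derivative in \emph{both} pieces at the $H^s$ level. For the feedback piece, the Kato--Ponce commutator $[\p^\alpha,\na^2\tilde q]u$ requires $\na^2\tilde q\in H^s$, i.e.\ $\tilde q\in H^{s+2}$, but elliptic regularity from $w\in H^s$ only gives $\tilde q\in H^{s+1}$; the smallness $\lA\na^2\tilde q\rA_{L^\infty}\les\eps$ handles only the symmetric top-order part, not this commutator. For the transport piece, extracting the cancellation $\int\p^\alpha u\cdot(u\cdot\na)\p^\alpha u=0$ requires removing $\P$, which brings in $[\p^\alpha,\P]$ acting on $(u\cdot\na)u\in H^{s-1}$ --- and this commutator does \emph{not} gain a derivative on a bounded domain (it is not a Kato--Ponce commutator at all; $\P$ is a nonlocal projection, not a multiplication operator). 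The paper avoids both losses by \emph{not} decomposing: it rewrites $\P(u\cdot\na w)=u\cdot\na u+[\P,u\cdot\na]w$ and proves directly (Theorem~\ref{theo:cmt}) that $\lA[\P,u\cdot\na]w\rA_{H^s}\le C\lA u\rA_{H^s}\lA w\rA_{H^s}$, the key computation being that $[\P,u\cdot\na]w\cdot n\vert_{\p\Omega}$ collapses to zeroth-order terms precisely because $u\cdot n\vert_{\p\Omega}=0$. For the linear damping $\P[(\na^2 p_*)u]$, the paper likewise never attempts to bound $[\p^\alpha,\P]$ directly: it introduces boundary-adapted derivatives $\p_{\tau_j},\p_n$, shows that purely tangential strings commute with $\P$ up to lower order (Proposition~\ref{prop:estuO}), and closes a \emph{hierarchy} of norms $\lA u\rA_{s,k}$ indexed by the number $k$ of normal derivatives, each level absorbed into the previous via Young's inequality. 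Your ``tangential/normal decomposition'' hint points in the right direction but does not yet supply either of these two mechanisms.
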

\begin{rema}
  The domain $\Omega$ need not be $C^\infty$ but only $C^{[s]+n_0}$ for a  sufficiently large integer $n_0$.
\end{rema}
For periodic perturbations, our result reads as follows.  
\begin{theo}\label{theo:globalT}
Let $s>1+\frac{d}{2}$ be a real number with $d\ge 2$. Let  $\Omega$ be the box $[0, L]^d$, $L>0$.  Consider $y_*=\nabla p_*$ for some  function $p_*:\Omega\to \Rr$ whose Hessian is $L$-periodic  and satisfies 
\bq\label{convexityT}
\na^2 p_*(x)\ge \theta_0\text{Id}\quad\forall x\in \Omega,\quad\tt_0>0.
\eq
Then, there exists a small  positive number $\eps$ depending only on $\theta_0$ and $\Vert y_*\Vert_{H^{s+1}(\Omega)}$ such that the following holds. For all $y_0\in H^s(\Omega)$ such that $y_0-y_*$ is $L$-periodic and $\Vert y_0-y_*\Vert_{H^s(\Omega)}\le \eps$,  problem \eqref{AHT}  has a unique global solution $y$. In addition, there is a positive constant $C$ depending only on $\theta_0$ and $\Vert y_*\Vert_{H^{s+1}(\Omega)}$ so that $y(t)-y_*$ is $L$-periodic, 
\bq\label{stab:estT}
\Vert y(t)-y_*\Vert_{H^s(\Omega)}\le C\Vert y_0-y_*\Vert_{H^s(\Omega)}
\eq
and
\bq\label{damping:uT}
\Vert\P y(t)\Vert_{H^s(\Omega)}\le C\Vert y_0-y_*\Vert_{H^s(\Omega)}e^{-\frac{\theta_0}{4} t}
\eq
for all $t\ge 0$. Moreover, there exists a strictly convex function $p_\infty:\Omega\to \Rr$ such that
\bq\label{relaxT}
 \| y(t)-\nabla p_\infty\|_{H^{s-1}(\Omega)}\le Ce^{-\frac{\tt_0t}{4}}\quad\forall t\ge 0.
\eq
In particular,  $\na p_\infty$ is the optimal rearrangement of $y_0$.
\end{theo}
\begin{rema}
In Theorem \ref{theo:globalT}, the steady solution $y_*=\na p_*$ need not be periodic, only its Hessian $\na^2p_*$ is required to be periodic and positive. A typical example is $p_*=|x|^2+q_*(x)$ where $q_*$ is $L$-periodic with sufficiently small $\|\na^2q_*\|_{L^\infty(\Omega)}$. The initial map $y_0$ need not be periodic, but if the initial perturbation $y_0-y_*$ is periodic then it remains so for all positive times. 
\end{rema}
The estimates  \eqref{relax} and \eqref{relaxT} exhibit the exponential convergence towards the optimal rearrangement of $y_0$ provided that $y_0$ is sufficiently close to a map with strictly convex potential. This justifies the efficiency of the AHT model \eqref{AHT}. Theorem \ref{theo:global} also provides the first class of time-dependent global solutions to this nonlocal vectorial transport equation for which the issues of global regularity and finite-time blowup remain open. 

Let $y_* = \nabla p_*$ be a steady state of \eqref{AHT} where $p_*$ satisfies the strict convexity condition \eqref{convexity}. Introduce the perturbation $z=y-y_*$. 
Noticing that $\P y_* = 0$, equation \eqref{AHT} yields 
\bq\label{eq:z0}
\begin{aligned}
\p_tz+u\cdot\nabla y_*+u\cdot \nabla z&=0,\\
u&=\P z,
\end{aligned}
\eq
where $u\cdot n =0$ on $\partial \Omega$.  In order to obtain the global stability, some form of decay is needed. Since $z$ is transported, it is not expected to decay.  Our idea is to obtain decay for the divergence-free part $u$ of $z$. Indeed, taking Leray's projection of \eqref{eq:z0} one finds that $u$ obeys
\bq\label{eq:u:intro}
\p_t u+\P (u\cdot\nabla y_*)+\P(u\cdot \nabla z)=0.
\eq
An $L^2$ energy estimate combined with  the strict convexity of $p_*$ and the fact that $\P$ is self-adjoint in $L^2$ shows that $u$ decays exponentially when measured in $L^2$. We need however decay of high Sobolev norms of $u$ in order to close the nonlinear iteration. 
In performing a direct $H^s$ energy estimate for $u$ at the level of \eqref{eq:u:intro}, there are at least two difficulties:
\begin{itemize}
\item[(i)] the term $u\cdot \nabla z$ would induce a loss of derivatives due to the presence of $\na z$;
\item[(ii)] to reveal the damping mechanism due to $\na y_*=\na^2p_*\ge \theta_0\text{Id}$, one needs to make appear the term $D^s u\cdot \na y_*$ where $D^s$ denotes any partial derivatives of order $s$. However, in the presence of boundaries,  $D^s$ do not commute with $\P$. Moreover, in general the commutator  $[D^s, \P]$ does not exhibit a gain of derivative, and hence is of the same order as the damping term.
\end{itemize}
\hk To handle (i) we commute $\P$ with $u\cdot   \na$ as follows
\bq\label{eq:u0}
\p_tu+\P(u\cdot \nabla y_*)+u\cdot\nabla u+[\P, u\cdot\nabla] z=0.
\eq
 The new nonlinear term $u\cdot \na u$ is now an advection term, and thus does not induce any loss of derivatives. However, a gain of one derivative in $[\P, u\cdot\nabla] z$ is then needed. As mentioned in (ii), such a gain is not true in general for $[\P, \p_j]$. Interestingly, if one replaces partial derivatives $\p_j$ with $u\cdot \na$, this holds even in domains with boundary, provided only that $u$ is tangent to the boundary. This is the content of the next theorem, which is of independent interest. Throughout this paper we denote
\[
  \T^d:=(\Rr/L\Zz)^d.
  \]
\begin{theo}\label{theo:cmt} Let $s> 1+\frac{d}{2}$ be an integer with $d\ge 2$. Consider  $\Omega = \T^d$ or $\Omega$ a bounded domain in $\RR^d$ with smooth boundary. Let $\P$ denote the Leray projector associated to $\Omega$. Then, for any vector fields $u$, $z\in H^s(\Omega; \Rr^d)$ with $u\cdot n\vert_{\p\Omega}=0$ when $\partial\Omega \not =\emptyset$, the commutator estimate
\bq\label{cmt:elliptic1}
\| [\P, u\cdot\nabla]z\|_{H^s(\Omega)}\le C\|u\|_{H^s(\Omega)}\| z\|_{H^s(\Omega)}
\eq
holds for some universal constant $C$. 
\end{theo}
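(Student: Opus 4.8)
The plan is to write the commutator explicitly via two Neumann problems and then to recover the missing derivative by an elliptic estimate for the div--curl system, exploiting three cancellations of top-order terms. With $p$ as in \eqref{def-P} (so $\P z=z-\nabla p$), let $q$ solve $\Delta q=\nabla\cdot(u\cdot\nabla z)$ in $\Omega$ with $\partial_n q=(u\cdot\nabla z)\cdot n$ on $\partial\Omega$, so that $\P(u\cdot\nabla z)=u\cdot\nabla z-\nabla q$. Then
\[
\Phi:=[\P,u\cdot\nabla]z=\P(u\cdot\nabla z)-u\cdot\nabla(\P z)=u\cdot\nabla\nabla p-\nabla q .
\]
Since $u\cdot\nabla z\in H^{s-1}(\Omega)$ and $\P$ is bounded on $H^{s-1}$, we know a priori that $\Phi\in H^{s-1}(\Omega)$ with $\|\Phi\|_{H^{s-1}}\lesssim\|u\|_{H^s}\|z\|_{H^s}$; the content of the theorem is to gain one derivative. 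Write $v:=\P z$, which is divergence-free with $v\cdot n=0$ on $\partial\Omega$ by \eqref{BCs}; since $z=v+\nabla p$ one has $\nabla p-z=-v$.

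The first step is to check that $\curl\Phi$, $\dive\Phi$ and the normal trace $\Phi\cdot n|_{\partial\Omega}$ are each one derivative smoother than $\Phi$. For the curl, $\curl\Phi=\curl(u\cdot\nabla\nabla p)$ since $\curl\nabla q=0$, and in $\curl(u\cdot\nabla\nabla p)$ the third-order derivatives of $p$ cancel by symmetry of mixed partials, leaving a sum of products of first derivatives of $u$ with second derivatives of $p$, hence an element of $H^{s-1}$. For the divergence, using $\Delta q=\nabla\cdot(u\cdot\nabla z)$,
\[
\dive\Phi=\nabla\cdot\big(u\cdot\nabla(\nabla p-z)\big)=-\nabla\cdot(u\cdot\nabla v)=-\sum_{i,j}\partial_iu_j\,\partial_jv_i ,
\]
the term $u\cdot\nabla(\dive v)$ having dropped out because $\dive v=0$; this is again a product of first derivatives, hence in $H^{s-1}$. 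For the normal trace, $\partial_n q=(u\cdot\nabla z)\cdot n$ gives $\Phi\cdot n=-(u\cdot\nabla v)\cdot n$ on $\partial\Omega$, and this is where the hypothesis $u\cdot n|_{\partial\Omega}=0$ enters: extending $n$ to a smooth vector field $N$ near $\partial\Omega$ and writing $(u\cdot\nabla v)\cdot N=u\cdot\nabla(v\cdot N)-v\cdot\big((u\cdot\nabla)N\big)$, the first term vanishes on $\partial\Omega$ because $v\cdot N=v\cdot n\equiv0$ there, so $\nabla(v\cdot N)$ is normal to $\partial\Omega$ while $u$ is tangent; hence $\Phi\cdot n=v\cdot\big((u\cdot\nabla)N\big)$ on $\partial\Omega$, which depends only on the boundary traces of $u$ and $v$ and so belongs to $H^{s-1/2}(\partial\Omega)$. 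Using that $H^{s-1}$ is a Banach algebra (as $s-1>d/2$), the elliptic bound $\|p\|_{H^{s+1}}\lesssim\|z\|_{H^s}$ for \eqref{def-P}, the estimate $\|\P z\|_{H^s}\lesssim\|z\|_{H^s}$, and the trace inequality, each of these three quantities is controlled by $C\|u\|_{H^s}\|z\|_{H^s}$.

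It then remains to invoke a classical elliptic fact: if a vector field $w\in H^1(\Omega;\Rr^d)$ has $\curl w\in H^{s-1}(\Omega)$, $\dive w\in H^{s-1}(\Omega)$ and $w\cdot n\in H^{s-1/2}(\partial\Omega)$, then $w\in H^s(\Omega)$ and
\[
\|w\|_{H^s(\Omega)}\le C\Big(\|\curl w\|_{H^{s-1}(\Omega)}+\|\dive w\|_{H^{s-1}(\Omega)}+\|w\cdot n\|_{H^{s-1/2}(\partial\Omega)}+\|w\|_{L^2(\Omega)}\Big),
\]
which is elliptic regularity for the div--curl system ($\Delta w$ being a first-order operator applied to $\dive w$ and $\curl w$). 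Applying this with $w=\Phi\in H^{s-1}\subset H^1$ and using the crude bound $\|\Phi\|_{L^2}\le\|u\|_{L^\infty}\|\nabla^2p\|_{L^2}+\|\nabla q\|_{L^2}\lesssim\|u\|_{H^s}\|z\|_{H^s}$ (here $\nabla q=(I-\P)(u\cdot\nabla z)$ and $I-\P$ is an orthogonal projection on $L^2$) yields \eqref{cmt:elliptic1}. The principal obstacle is conceptual: seeing that the obvious $H^{s-1}$ estimate for $\Phi$ can be improved at all. All three gains stem from genuine top-order cancellations, and the one in the normal trace --- the most delicate --- is available precisely because both the transporting field $u$ and $\P z$ are tangent to the boundary.
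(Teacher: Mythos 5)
Your proof is correct and takes essentially the same route as the paper: both express $[\P,u\cdot\nabla]z=(u\cdot\nabla)\nabla f-\nabla g$ via two Neumann potentials and then recover the $H^s$ bound from separate estimates on $\dive$, $\curl$, and the normal trace through the div--curl elliptic inequality, exploiting the same three top-order cancellations. Your derivation of the boundary cancellation via $(u\cdot\nabla v)\cdot N=u\cdot\nabla(v\cdot N)-v\cdot\big((u\cdot\nabla)N\big)$ with $v=\P z$ tangent is an equivalent reformulation of the paper's computation, which instead decomposes $u$ into normal and tangential components near $\partial\Omega$.
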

Regarding the difficulty (ii), we observe that ``tangential derivatives'' commute nicely with the Leray projector while ``normal derivatives'' do not. We then introduce a boundary adapted system of derivatives $\cD^s$ (see Section \ref{derivativesystem}) which are defined everywhere and become the usual  tangential and normal derivatives when restricted to the boundary. Next, to avoid the commutator $[\cD^s, \P]$ when dealing with the nonlocal term $\P (u\cdot\na y_*)$ we write
\[
\int_\Omega \cD^su\cdot \cD^s\P (u\cdot \na y_*)dx=\int_\Omega \cD^su\cdot \cD^s (u\cdot \na y_*)dx+\int_\Omega \cD^su\cdot \cD^s(\P-\text{Id})(u\cdot \na y_*)dx
\] 
and notice a special structure in the second integral.  This allows us  to prove a {\it hierarchy of estimates} for the velocity $u$, ordered by {\it the number of normal derivatives} in $\cD^s$, and hence to close our nonlinear iteration.

For the proof of Theorem \ref{theo:global} we will need the local well-posedness of the AHT model \eqref{AHT} in Sobolev spaces:
\begin{theo}\label{theo:lwp}
Let $\Omega$ be a bounded domain in $\Rr^d$, $d\ge 2$ with smooth boundary or periodic boundary conditions. Let $s>1+\frac{d}{2}$ be an integer. Then for any initial data $z_0\in H^s(\Omega)$, there exist a positive time $T$ depending only on $\| z_0\|_{H^s(\Omega)}$ and a unique solution $z\in C([0, T]; H^s(\Omega))$ of \eqref{AHT}.
\end{theo}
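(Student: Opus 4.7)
My plan is to prove Theorem~\ref{theo:lwp} by a standard Picard iteration for transport-type equations, using the boundary-adapted derivative system $\cD^s$ introduced later in the paper to perform the high-order energy estimates up to $\p\Omega$. The two structural facts that drive the argument are that $u=\P y$ is automatically divergence-free and tangent to $\p\Omega$, and that the Leray projector is bounded on $H^s(\Omega)$ by the elliptic estimate for \eqref{def-P}; in particular $\|\P y\|_{H^s}\lesssim\|y\|_{H^s}$.

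First I would set $y^{(0)}\equiv z_0$ and inductively solve the linear problem
\[
\p_t y^{(n+1)}+u^{(n)}\cdot\na y^{(n+1)}=0,\quad y^{(n+1)}|_{t=0}=z_0,\quad u^{(n)}=\P y^{(n)}.
\]
Because $u^{(n)}$ is divergence-free and tangent to $\p\Omega$, its flow is a volume-preserving diffeomorphism of $\overline\Omega$, so $y^{(n+1)}=z_0\circ (\Phi^{(n)}_t)^{-1}$ is well defined and inherits the regularity of $z_0$. Applying $\cD^s$ and testing with $\cD^s y^{(n+1)}$, the advection part vanishes after integration by parts (using $\dive u^{(n)}=0$ together with $u^{(n)}\cdot n=0$ to kill both the divergence and boundary contributions), leaving only a commutator
\[
\frac{1}{2}\frac{d}{dt}\| \cD^s y^{(n+1)}\|_{L^2}^2=-\int_\Omega \cD^s y^{(n+1)}\cdot[\cD^s,u^{(n)}\cdot\na]y^{(n+1)}\,dx.
\]
A Kato--Ponce-type bound for this commutator in the boundary-adapted frame, together with $\|u^{(n)}\|_{H^s}\lesssim\|y^{(n)}\|_{H^s}$ and the equivalence of the $\cD^s$ norm with the $H^s$ norm, produces a uniform bound $\|y^{(n)}\|_{L^\infty_T H^s}\leq 2\|z_0\|_{H^s}$ on a time $T$ depending only on $\|z_0\|_{H^s}$, by a standard bootstrap.

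To pass to the limit I would estimate the difference $w^{(n)}=y^{(n+1)}-y^{(n)}$ at the $L^2$ level. It satisfies
\[
\p_t w^{(n)}+u^{(n)}\cdot\na w^{(n)}=-(u^{(n)}-u^{(n-1)})\cdot\na y^{(n)},
\]
and since $\|u^{(n)}-u^{(n-1)}\|_{L^2}\lesssim \|w^{(n-1)}\|_{L^2}$ while $\na y^{(n)}$ is uniformly bounded in $L^\infty$ by Sobolev embedding (here one uses $s>1+\frac{d}{2}$), after possibly shrinking $T$ the iteration is a contraction in $L^\infty_T L^2$. Interpolating against the uniform $H^s$ bound upgrades this to strong convergence in $L^\infty_T H^{s'}$ for every $s'<s$, which is enough to pass to the limit in every term of \eqref{AHT}. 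The limit $y$ lies in $L^\infty_T H^s$ and solves the equation; uniqueness follows from the same $L^2$ contraction applied to two $H^s$ solutions, and the strong time continuity $y\in C([0,T];H^s)$ is recovered by a Bona--Smith regularization of the initial data.

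The main technical obstacle is the high-order commutator estimate for $[\cD^s,u\cdot\na]$ in the boundary-adapted frame. The tangential components of $\cD^s$ commute well with the transport operator, but the normal components do not, so one must verify that the resulting commutator still loses only a single derivative that can be absorbed by $\|u\|_{H^s}\|y\|_{H^s}$. This is a variant of the same circle of ideas underlying Theorem~\ref{theo:cmt} (with $\cD^s$ playing the role of the outer factor in place of $\P$), and once it is in place the remainder of the proof is a routine adaptation of standard local well-posedness arguments for quasilinear transport equations.
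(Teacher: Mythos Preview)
The paper does not give a detailed proof of Theorem~\ref{theo:lwp}; it simply remarks that since $u=\P z$ has the same $H^s$ regularity as $z$, the result follows from standard energy methods. The intended a~priori estimate is the one carried out for $z$ in the lemma yielding \eqref{est:z:O}: one applies a \emph{standard} partial derivative $D^s$ to the equation, uses that $u$ is divergence-free and tangent to $\p\Omega$ so that $\int_\Omega D^s z\cdot(u\cdot\nabla D^s z)\,dx=0$, and controls the remainder by the classical Moser/Kato--Ponce commutator bound $\|[D^s,u]\cdot\nabla z\|_{L^2}\lesssim\|u\|_{H^s}\|z\|_{H^s}$ (cited from \cite{MajBer}). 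This closes an $H^s$ energy inequality directly, and the rest (Picard iteration, $L^2$ contraction, Bona--Smith) is exactly as you describe.

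Your proposal is therefore correct in spirit but takes an unnecessarily heavy route: the boundary-adapted system $\cD^s$ is designed in the paper to cope with the \emph{nonlocal} commutator $[\cD^s,\P]$ in the damping term, not with the transport structure. For the pure transport equation the ordinary derivatives $D^s$ already do the job, because the only integration by parts needed is $\int_\Omega u\cdot\nabla|D^s z|^2=0$, which holds for any $D^s$ once $\dive u=0$ and $u\cdot n|_{\p\Omega}=0$. By contrast, your $\cD^s$ scheme forces you to prove a Kato--Ponce-type bound for $[\cD^s,u\cdot\nabla]$ in the curved frame, and---a point you gloss over---the $\cD^s$ seminorms alone are \emph{not} equivalent to $\|\cdot\|_{H^s}$ because of the cutoff $\chi_1$ built into $\p_n,\p_{\tau_j}$; as in \eqref{def:W}--\eqref{HZ} one must add an interior piece $\|\chi_2 z\|_{H^s}$ to recover the full $H^s$ norm. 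None of this is fatal, but it is avoidable: replacing $\cD^s$ by $D^s$ throughout your argument removes the ``main technical obstacle'' you flag and aligns the proof with the paper's one-line justification.
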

Local well-posedness of \eqref{AHT} in H\"older spaces $C^{1, \alpha}(\Omega)$ has been obtained in \cite{AHT}. Since the velocity $u$ has the same Sobolev regularity as the unknown $z$,  the proof of Theorem 1.6 is standard via energy methods, and thus will be skipped.

The paper is organized as follows. Section \ref{sec-commutator} is devoted to various commutator estimates involving the Leray projector. Theorem \ref{theo:globalT} is proved in Section \ref{sec-proofT}, and Theorem \ref{theo:global} is proved  in Section \ref{sec-proofB}.

Throughout this paper, we denote  by $\p_j$,  $j\in\{1,...,d\}$ the $j$th partial derivative and by $D^m$ any partial derivatives of order $m\in \Nn$.

\section{Commutator estimates}\label{sec-commutator}
\subsection{A boundary adapted system of derivatives}\label{derivativesystem}
For simplicity, we assume from now on that $\Omega$ is  a $C^\infty$ domain. Let $\delta(x)=\dist(x, \p\Omega)$ be the distance function. There exists a small number $\kappa>0$ such that $\delta$ is $C^\infty$  in the neighborhood of the  region
\[
\Omega_{3\kappa}=\{ x\in \Omega: \delta(x)\le 3\kappa\}
\]
and $\na \delta(x) \ne 0$ for any $x\in \Omega_{3\kappa}$. Note that the unit  outward normal $n(x)=-\na\delta(x)$ for $x\in \p\Omega$. We thus can extend $n$ to $\Omega_{3\kappa}$ by setting
\[
n(x)=-\frac{\na\delta(x)}{|\na \delta(x)|},\quad x\in \Omega_{3\kappa}.
\]
For each $x\in \Omega_{3\kappa}$, we can choose $\tau(x)=\{\tau_j(x): j=1,...,d-1\}$ an  orthonormal basis of $(n(x))^\perp$ in $\Rr^d$ such that $\tau_j\in C^\infty(\Omega_{3\kappa})$.  

Next we fix a cutoff function $\chi_1:\overline\Omega\to [0, 1]$  satisfying 
\bq\label{def:chi1}
\chi_1\equiv 1\quad\text{in a neighborhood of}~\Omega_{2\kappa},\qquad  \chi_1\equiv 0\quad\text{in}~\overline\Omega\setminus \Omega_{3\kappa}.
\eq
 For a vector field $v:\Omega\to \Rr^d$ we define  its weighted normal and tangential components respectively by 
\[
v_n(x)=\chi_1(x)v(x)\cdot n(x),\quad v_{\tau_j}(x)=\chi_1(x)v(x)\cdot \tau_j(x),\quad j=1,...,d-1
\]
for $x\in \Omega$. In particular, $v=v_n n+\sum_{j=1}^{d-1}v_{\tau_j}\tau_j$ in $\Omega_{2\kappa}$. In the special case of gradient vectors $\na f$ where $f:\Omega\to \Rr$, we write
\[
\p_n f=(\na f)_n,\quad \p_{\tau_j}f=(\na f)_{\tau_j},\quad j=1,...,d-1.
\]
Both $\p_n f$ and $\p_\tau f$ are defined over  $\Omega$ and become the usual normal and tangential derivatives  when restricted to the boundary. Note in addition that
\bq\label{decompose:na1}
\na f=n\p_n f+\sum_{j=1}^{d-1}\tau_j\p_{\tau_j}f\quad\text{in} ~\Omega_{2\kappa}.
\eq
For a vector field $v:\Omega\to \Rr^d$ we write $\p_n v=(\na v) \cdot n$ and similarly for $\p_{\tau_j}v$. Then we have
\bq\label{nav:tn}
|\na v|^2=\sum_{i=1}^d|\na v_i|^2=\sum_{i=1}^d(|\p_n v_i|^2+\sum_{j=1}^{d-1}|\p_{\tau_j} v_i|^2)=|\p_n v|^2+\sum_{j=1}^{d-1}|\p_{\tau_j} v|^2
\eq
for $x\in\Omega_{2\kappa}$.
\begin{lemm}
For $v:\Omega\to \Rr^d$ and $f:\Omega\to \Rr$ we have
\bq\label{pnt}
\p_nv\cdot n+\sum_{j=1}^{d-1}\p_{\tau_j}v\cdot\tau_j=\dive v
\eq
and
\bq\label{pnt2}
\p_n^2f+\sum_{j=1}^{d-1}\p^2_{\tau_j}f=\Delta f+\na f\cdot (n\cdot \na) n+\sum_{j=1}^{d-1}\na f\cdot (\tau_j\cdot \na) \tau_j
\eq
at any $x\in \Omega_{2\kappa}$.
\end{lemm}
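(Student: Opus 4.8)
The plan is to reduce both identities to the elementary fact that the trace of a linear map is independent of the orthonormal basis used to compute it. I would work at a fixed point $x\in\Omega_{2\kappa}$. There, by \eqref{def:chi1} the cutoff $\chi_1$ is identically $1$ on a whole neighborhood of $x$, so $\p_n$ and $\p_{\tau_j}$ act as the genuine directional derivatives: on a scalar $g$ one has $\p_n g=(n\cdot\na)g$, $\p_{\tau_j}g=(\tau_j\cdot\na)g$, and on a vector field $v$ one has $\p_n v=(Dv)\,n=(n\cdot\na)v$, $\p_{\tau_j}v=(Dv)\,\tau_j=(\tau_j\cdot\na)v$, where $Dv=(\p_k v_i)_{i,k}$ is the Jacobian matrix. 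The point is that $\{n(x),\tau_1(x),\dots,\tau_{d-1}(x)\}$ is an orthonormal basis of $\Rr^d$.

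For \eqref{pnt}: write $\dive v=\mathrm{tr}(Dv)$ and use that $\mathrm{tr}(A)=\sum_\alpha f_\alpha\cdot(Af_\alpha)$ for any orthonormal basis $\{f_\alpha\}$ (apply $\mathrm{tr}(Q^{\mathsf T}AQ)=\mathrm{tr}(A)$ with $Q$ the orthogonal matrix whose columns are the $f_\alpha$). Taking $\{f_\alpha\}=\{n,\tau_1,\dots,\tau_{d-1}\}$ gives $\dive v=n\cdot\big((Dv)n\big)+\sum_{j=1}^{d-1}\tau_j\cdot\big((Dv)\tau_j\big)=\p_nv\cdot n+\sum_{j=1}^{d-1}\p_{\tau_j}v\cdot\tau_j$, which is exactly \eqref{pnt}.

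For \eqref{pnt2}: apply \eqref{pnt} to $v=\na f$, so that $\dive v=\Delta f$, to obtain $\Delta f=n\cdot\p_n(\na f)+\sum_{j=1}^{d-1}\tau_j\cdot\p_{\tau_j}(\na f)$. It then remains to relate $n\cdot\p_n(\na f)$ to $\p_n^2 f=\p_n(\p_n f)$. Since $\p_n f=\na f\cdot n$ near $x$, the Leibniz rule gives $\p_n^2 f=(n\cdot\na)(\na f\cdot n)=\big((n\cdot\na)\na f\big)\cdot n+\na f\cdot\big((n\cdot\na)n\big)=n\cdot\p_n(\na f)+\na f\cdot(n\cdot\na)n$, hence $n\cdot\p_n(\na f)=\p_n^2 f-\na f\cdot(n\cdot\na)n$; the identical computation with $\tau_j$ in place of $n$ yields $\tau_j\cdot\p_{\tau_j}(\na f)=\p_{\tau_j}^2 f-\na f\cdot(\tau_j\cdot\na)\tau_j$. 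Substituting these into the expression for $\Delta f$ above and rearranging gives \eqref{pnt2}.

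The only genuine subtlety — and the place requiring care — is that the moving frame $\{n,\tau_j\}$ is not constant: its variation is precisely what produces the extra terms $\na f\cdot(n\cdot\na)n$ and $\na f\cdot(\tau_j\cdot\na)\tau_j$ in \eqref{pnt2} (for a fixed Cartesian frame these would vanish and one would recover the naive $\sum_\alpha\p_\alpha^2 f=\Delta f$). So the main work is the Leibniz-rule bookkeeping in the last paragraph, together with the observation that, because $\chi_1\equiv 1$ near $\Omega_{2\kappa}$, no derivatives of $\chi_1$ enter when differentiating $\p_n f$ and $\p_{\tau_j}f$ at points of $\Omega_{2\kappa}$ — which is exactly why the statement is restricted to $\Omega_{2\kappa}$ rather than $\Omega_{3\kappa}$.
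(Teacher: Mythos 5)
Your proof is correct and follows essentially the same route as the paper: identity \eqref{pnt} comes from orthonormality of the moving frame (you phrase it as trace-invariance of linear maps under orthogonal changes of basis; the paper writes it out as $RR^T=\mathrm{Id}$ with $R$ the frame matrix), and \eqref{pnt2} is the same Leibniz-rule bookkeeping with the frame-variation terms. The one small organizational difference is that you derive \eqref{pnt2} by applying \eqref{pnt} to $v=\na f$ and then reconciling $n\cdot\p_n(\na f)$ with $\p_n^2f$, whereas the paper repeats the direct index computation in parallel with \eqref{pnt}; both are valid and equivalent, and your remark about why the restriction to $\Omega_{2\kappa}$ (and not $\Omega_{3\kappa}$) matters — so that $\na\chi_1$ never enters — is exactly the right point to flag.
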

\begin{proof}
We first notice that since $\chi_1\equiv 1$  in $ \Omega_{2\kappa}$. If $R$ denotes the matrix whose first $d-1$ columns are $\tau_1,...,\tau_{d-1}$ and whose $d$th column is $n$, then $R$ is orthonormal; that is, $RR^T=\text{Id}$. Using this and the above definitions of $\p_n$ and $\p_{\tau_j}$ we have
\[
\begin{aligned}
\p_nv\cdot n+\sum_{j=1}^{d-1}\p_{\tau_j}v\cdot\tau_j&=\sum_{k, \ell=1}^d\p_kv_\ell n_kn_\ell+\sum_{j=1}^{d-1}\sum_{k, \ell=1}^d\p_kv_\ell \tau_{j,k}\tau_{j,\ell}\\
&=\sum_{k, \ell=1}^d \p_ku_\ell \sum_{j=1}^dR_{k, j}R^T_{j, \ell}\\
&=\sum_{k, \ell=1}^d \p_ku_\ell \delta_{k, \ell}\\
&=\dive u.
\end{aligned}
\]
Similarly, we have
\[
\begin{aligned}
\p_n^2f+\sum_{j=1}^{d-1}\p^2_{\tau_j}f&=\sum_{k, \ell=1}^d\p_k\p_\ell fn_kn_\ell+\sum_{j=1}^{d-1}\sum_{k, \ell=1}^d\p_k\p_\ell f\tau_{j,k}\tau_{j,\ell}+\na f\cdot (n\cdot \na) n+\sum_{j=1}^{d-1}\na f\cdot (\tau_j\cdot \na) \tau_j\\
&=\sum_{k, \ell=1}^d \p_k\p_\ell f\delta_{k, \ell}+\na f\cdot (n\cdot \na) n+\sum_{j=1}^{d-1}\na f\cdot (\tau_j\cdot \na) \tau_j\\
&=\Delta f+\na f\cdot (n\cdot \na) n+\sum_{j=1}^{d-1}\na f\cdot (\tau_j\cdot \na) \tau_j.
\end{aligned}
\]
\end{proof}
\subsection{Proof of Theorem \ref{theo:cmt}}
In this section, we study the commutator term $[\P, u\cdot\nabla] z$ appearing in \eqref{eq:u0} and prove Theorem \ref{theo:cmt}. We start with the periodic case $\Omega = \T^d$, $d\ge 2$.

\begin{lemm}\label{lemm:comm}
Let $s>1+\frac{d}{2}$ be a real number and let $\P$ be the Leray projector. Then, for any $u,z\in H^s(\T^d)$, there holds
\[
\Vert [\P, u\cdot\nabla ] z\Vert_{H^s(\T^d)}\le C\Vert u\Vert_{H^s}\Vert z\Vert_{H^s(\T^d)}
\]
for some universal constant $C$. 
\end{lemm}
\begin{proof}
For any two functions $f,~g: \Rr^d\to \Rr$, we have the following Bony's decomposition (see \cite{Bony, BCD})
\[
fg=T_fg+T_gf+R(f, g)
\]
where $T_fg$ and $T_gf$ are paraproducts, so that the following hold: 
\begin{itemize}
\item Pararoduct estimates:
\bq\label{paraprod}
\Vert T_fg\Vert_{H^r}\le C\Vert f\Vert_{L^\infty}\Vert g\Vert_{H^r}\quad \forall r\in \Rr.
\eq
\item Reminder estimates:
\bq\label{Bonyr}
\Vert R(f, g)\Vert_{H^{r_1+r_2-\frac{d}{2}}}\le C\Vert  f\Vert_{H^{r_1}}\Vert g\Vert_{H^{r_2}}
\eq
for all $r_1$ and $r_2$ in $\Rr$ satisfying $r_1+r_2>0$.
\item Commutator estimates:
\bq\label{paracomm}
\Vert [T_f, m(D)]g\Vert_{H^{r-k+\alpha}}\le C\Vert f\Vert_{W^{\alpha, \infty}}\Vert g\Vert_{H^r}
\eq
for any homogeneous operator $m(D)$ of order $k$, $\alpha\in (0, 1]$, and $r\in \Rr$.
\end{itemize}
Recall that on $\T^d$, the Leray projector $\P =(\P_{ij})$ is a Fourier multiplier matrix of order $0$. In addition, we note that $[\P,\nabla] =0$. Denoting by $v_k$ the kth component of the vector $v$, we have
\begin{align*}
([\P, u\cdot \nabla ]z)_k&=(\P(u\cdot\nabla z))_k-(u\cdot \nabla\P z)_k\\
&=\P_{kj}(u_i\p_iz_j)-u_i\p_i\P_{kj}z_j\\
&=\P_{kj}(u_i\p_iz_j)-u_i\P_{kj}\p_iz_j\\
&=\P_{kj}(T_{u_i}\p_iz_j)+\P_{kj}(T_{\p_iz_j}u_i)+\P_{kj} R(u_i, \p_iz_j) \\&\quad -T_{u_i}\P_{kj}\p_iz_j-T_{\P_{kj}\p_iz_j}u_i-R(u_i, \P_{kj}\p_iz_j)\\
&=[\P_{kj}, T_{u_i}]\p_iz_j+\P_{kj}(T_{\p_iz_j}u_i)+\P_{kj} R(u_i, \p_iz_j)-T_{\P_{kj}\p_iz_j}u_i-R(u_i, \P_{kj}\p_iz_j).
\end{align*}
Here and in what follows, the summation in $i,j$ is understood. Since $\P$ is a Fourier multiplier of oder $0$ and $T_{u_i}$ has order $0$, the commutator estimate \eqref{paracomm} yields
\[
\Vert [\P_{kj}, T_{u_i}]\p_iz_j\Vert_{H^s}\le C\Vert u\Vert_{W^{1, \infty}}\Vert \nabla z\Vert_{H^{s-1}}\le C\Vert u\Vert_{H^s}\Vert z\Vert_{H^s}
\]
where we used the embedding $H^s\subset W^{1, \infty}$ for $s>1+\frac{d}{2}$. The paraproduct terms $\P_{kj}(T_{\p_iz_j}u_i)$ and $T_{\P_{kj}\p_iz_j}u_i$ are estimated in $H^s$ by means of the paraproduct rule \eqref{paraprod}, the embedding $H^s\subset W^{1, \infty}$, and the fact that $\P$ is continuous from $H^s$ to $H^s$. Finally, the reminder terms can be treated using \eqref{Bonyr} as follows
\[
\Vert R(u_i, \p_iz_j)\Vert_{H^s}\le \Vert R(u_i, \p_iz_j)\Vert_{H^{s+s-1-\frac{d}{2}}}\le C\Vert u\Vert_{H^s}\Vert \nabla z\Vert_{H^{s-1}}\le C\Vert u\Vert_{H^s}\Vert z\Vert_{H^s},
\]
which gives the lemma. 
\end{proof}

We next turn to the case when $\Omega$ has a boundary. Fix an integer $s>1+\frac{d}{2}$. By definition, we write $z=\P z+\nabla f$, where $f$ solves
\bq\label{cmt:elliptic11}
\begin{cases}
\Delta f= \dive z\quad\text{in}~\Omega,\\
\frac{\p f}{\p n}=z\cdot n\quad\text{on}~\p\Omega.
\end{cases}
\eq
In particular, the standard elliptic regularity theory yields $\| f\|_{H^{s+1}(\Omega)} \le C \| z\|_{H^s(\Omega)}$. Similarly, we write $(u\cdot \nabla)z=\P((u\cdot \nabla)z)+\nabla g$, where $g$ solves
\bq\label{cmt:elliptic2}
\begin{cases}
\Delta g= \dive((u\cdot \nabla)z) \quad\text{in}~\Omega,\\
\frac{\p g}{\p n}=(u\cdot \nabla)z\cdot n\quad\text{on}~\p\Omega.
\end{cases}
\eq
Combining, we have
\begin{equation}\label{comm}\begin{aligned}
~[\P, u\cdot\nabla]z&=\P((u\cdot \nabla) z)-(u\cdot\nabla)\P z\\
&=(u\cdot \nabla)z-\nabla g-(u\cdot \nabla)(z-\nabla f)\\
&=(u\cdot \nabla)(\nabla f)-\nabla g.
\end{aligned}
\end{equation}
We shall bound the $H^s$ norm of $[\P, u\cdot\nabla]z$, using the following elliptic estimate
\bq\label{bd-h}
\begin{aligned}
\| h\|_{H^s(\Omega)}&\le C\| \dive h\|_{H^{s-1}(\Omega)}+C\| \curl h\|_{H^{s-1}(\Omega)}+C\| h\cdot n\|_{H^{s-\mez}(\p\Omega)},\end{aligned}
\eq
for $h = [\P, u\cdot\nabla]z$, where the terms on the right hand side are estimated in the following  lemmas.

\begin{lemm}
 There exists a positive constant $C$ such that
\bq\label{est:divh}
\| \dive ([\P, u\cdot\nabla]z)\|_{H^{s-1}(\Omega)}\le C\| u\|_{H^s(\Omega)} \|z\|_{H^{s}(\Omega)}
\eq
and 
\bq\label{est:curlh}
\| \curl  ([\P, u\cdot\nabla]z)\|_{H^{s-1}(\Omega)}\le C\| u\|_{H^s(\Omega)}\|z\|_{H^{s}(\Omega)}.
\eq
\end{lemm}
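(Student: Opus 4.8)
The plan is to start from the representation \eqref{comm}, $[\P, u\cdot\nabla]z = (u\cdot\nabla)\nabla f - \nabla g$, and to check that after the natural cancellations both $\dive$ and $\curl$ of the right-hand side are \emph{bilinear} expressions in which $u$ is differentiated at most once and $f$ at most twice. Since $s-1>\frac d2$, the space $H^{s-1}(\Omega)$ is a multiplicative algebra, so each such bilinear term is controlled in $H^{s-1}$ by the product of the $H^{s-1}$ norms of its two factors; together with the elliptic estimate $\|f\|_{H^{s+1}(\Omega)}\le C\|z\|_{H^s(\Omega)}$ already recorded after \eqref{cmt:elliptic11}, this gives \eqref{est:divh} and \eqref{est:curlh}. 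Throughout we sum over repeated indices.

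For the curl: since $\curl\nabla g=0$ we have $\curl([\P,u\cdot\nabla]z)=\curl((u\cdot\nabla)\nabla f)$, whose $(i,j)$ component is $\p_i(u_k\p_k\p_j f)-\p_j(u_k\p_k\p_i f)$. Expanding and using that mixed second derivatives commute, the third-order terms $u_k\p_k\p_i\p_j f$ cancel, leaving $\p_i u_k\,\p_k\p_j f-\p_j u_k\,\p_k\p_i f$. Each summand is a product of a first derivative of $u$ with a second derivative of $f$, hence lies in $H^{s-1}$ with norm $\le C\|\nabla u\|_{H^{s-1}(\Omega)}\|\nabla^2 f\|_{H^{s-1}(\Omega)}\le C\|u\|_{H^s(\Omega)}\|f\|_{H^{s+1}(\Omega)}\le C\|u\|_{H^s(\Omega)}\|z\|_{H^s(\Omega)}$.

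For the divergence: $\dive([\P,u\cdot\nabla]z)=\dive((u\cdot\nabla)\nabla f)-\Delta g$. A direct computation gives $\dive((u\cdot\nabla)\nabla f)=\p_i u_k\,\p_k\p_i f+(u\cdot\nabla)\Delta f$ and, from \eqref{cmt:elliptic2}, $\Delta g=\dive((u\cdot\nabla)z)=\p_i u_k\,\p_k z_i+(u\cdot\nabla)\dive z$. Because $\Delta f=\dive z$ by \eqref{cmt:elliptic11}, the advective terms $(u\cdot\nabla)\Delta f$ and $(u\cdot\nabla)\dive z$ cancel, so that $\dive([\P,u\cdot\nabla]z)=\p_i u_k(\p_k\p_i f-\p_k z_i)$, again bilinear in $\nabla u$ and in $\nabla^2 f-\nabla z$. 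The algebra property then yields $\|\dive([\P,u\cdot\nabla]z)\|_{H^{s-1}(\Omega)}\le C\|\nabla u\|_{H^{s-1}(\Omega)}(\|\nabla^2 f\|_{H^{s-1}(\Omega)}+\|\nabla z\|_{H^{s-1}(\Omega)})\le C\|u\|_{H^s(\Omega)}\|z\|_{H^s(\Omega)}$.

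The only non-routine point is these two cancellations. A priori, $\dive$ and $\curl$ applied to $(u\cdot\nabla)\nabla f$, and the term $\Delta g$, would each carry three derivatives of $f$ — respectively two derivatives of $z$ — i.e.\ a net loss of one derivative, which is exactly why $[\P,\p_j]$ does not gain a derivative in general. The relations $\Delta f=\dive z$ and the symmetry of mixed partials remove precisely those dangerous top-order contributions, after which the algebra structure of $H^{s-1}(\Omega)$ closes both estimates with nothing further to verify. No boundary information is used here beyond \eqref{comm} itself; the hypothesis $u\cdot n\vert_{\p\Omega}=0$ enters only through the trace term $\|h\cdot n\|_{H^{s-\mez}(\p\Omega)}$ in \eqref{bd-h}, treated in the following lemma.
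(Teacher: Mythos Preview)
Your proof is correct and follows essentially the same approach as the paper: both start from the representation \eqref{comm}, compute $\dive$ and $\curl$ explicitly, observe that the top-order terms cancel (via $\Delta f=\dive z$ for the divergence and via symmetry of mixed partials for the curl), and close with the algebra property of $H^{s-1}(\Omega)$ together with the elliptic estimate $\|f\|_{H^{s+1}}\le C\|z\|_{H^s}$. The only cosmetic difference is that for the curl the paper first rewrites $(u\cdot\nabla)\nabla f-\nabla g=\nabla(u\cdot\nabla f-g)-\nabla u_k\,\p_k f$ before applying $\curl$, whereas you compute $\curl((u\cdot\nabla)\nabla f)$ directly; the resulting expressions coincide.
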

\begin{proof}
In view of \eqref{comm}, we compute, using \eqref{cmt:elliptic11}, 
\begin{align*}
\dive ((u\cdot \nabla)(\nabla f))
=\na u: (\na \otimes \na)f+u\cdot \nabla \dive z.
\end{align*}
On the other hand, using equation \eqref{cmt:elliptic2}, we have 
\begin{align*}
\dive (\nabla g)&=\Delta g
=\dive((u\cdot \nabla)z).
=\na u:(\na z)^T.
\end{align*}
Combining, we have 
\bq\label{divh}
\dive ([\P, u\cdot\nabla]z)= \na u :[(\na \otimes \na)f-(\na z)^T].
\eq
The estimate \eqref{est:divh} thus follows directly from \eqref{divh}, upon using the fact that $H^{s-1}(\Omega)$ is an algebra and the elliptic estimates $\| f\|_{H^{s+1}(\Omega)} \le C \| z\|_{H^s(\Omega)}$.

Next, in view of \eqref{comm}, we write 
\[
 [\P, u\cdot\nabla]z=\nabla(u\cdot \nabla f-g)-\nabla u_k \partial_k f
\]
which gives $\curl  ([\P, u\cdot\nabla]z) =\nabla u_k\times \partial_k \nabla f$. The estimate \eqref{est:curlh} then follows from elliptic estimates as before.  
\end{proof}
\begin{lemm}
There exists a positive constant $C$ such that
\bq\label{est:hn}
\|  [\P, u\cdot\nabla]z \cdot n\|_{H^{s-\mez}(\p\Omega)}\le C\| u\|_{H^s(\Omega)} \|z\|_{H^{s}(\Omega)}.
\eq
\end{lemm}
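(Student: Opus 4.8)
The plan is to compute the normal trace of $[\P, u\cdot\nabla]z$ on $\p\Omega$ explicitly, exploiting the boundary condition $u\cdot n=0$ to uncover a cancellation. Starting from the identity \eqref{comm}, $[\P, u\cdot\nabla]z=(u\cdot\nabla)(\nabla f)-\nabla g$, the Neumann conditions in \eqref{cmt:elliptic11} and \eqref{cmt:elliptic2} give
\[
\nabla g\cdot n=(u\cdot\nabla)z\cdot n,\qquad \nabla f\cdot n=z\cdot n\qquad\text{on }\p\Omega .
\]
So everything reduces to understanding $[(u\cdot\nabla)(\nabla f)]\cdot n$ on $\p\Omega$, and the idea is to trade the two derivatives falling on $\nabla f$ for two tangential derivatives of its Neumann datum $z\cdot n$.

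The key point is that, since $u$ is tangent to $\p\Omega$, for any scalar $w\in C^1$ near $\p\Omega$ the restriction $(u\cdot\nabla w)|_{\p\Omega}$ depends only on $w|_{\p\Omega}$: the orthonormal decomposition $\nabla w=n(\nabla w\cdot n)+\sum_j\tau_j(\nabla w\cdot\tau_j)$ valid in $\Omega_{2\kappa}$ gives, using $u\cdot n=0$ on $\p\Omega$, that $u\cdot\nabla w=\sum_j(u\cdot\tau_j)(\nabla w\cdot\tau_j)$ on $\p\Omega$, a combination of purely tangential derivatives of $w$. Since $z,\nabla f\in H^s\hookrightarrow C^1(\overline\Omega)$ (as $s>1+\frac{d}{2}$) and $\nabla f\cdot n=z\cdot n$ on $\p\Omega$, applying this to $w=\nabla f\cdot n$ and to $w=z\cdot n$ together with the product rule yields, on $\p\Omega$,
\[
[(u\cdot\nabla)(\nabla f)]\cdot n+\nabla f\cdot(u\cdot\nabla)n=u\cdot\nabla(\nabla f\cdot n)=u\cdot\nabla(z\cdot n)=[(u\cdot\nabla)z]\cdot n+z\cdot(u\cdot\nabla)n .
\]
Subtracting $\nabla g\cdot n=[(u\cdot\nabla)z]\cdot n$ then gives the clean formula
\[
[\P, u\cdot\nabla]z\cdot n=(z-\nabla f)\cdot(u\cdot\nabla)n=(\P z)\cdot(u\cdot\nabla)n\qquad\text{on }\p\Omega .
\]

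With this formula in hand the estimate is routine. The normal field $n$ is smooth on the neighbourhood $\Omega_{3\kappa}$ of $\p\Omega$, so $(u\cdot\nabla)n$ (with components $u_k\p_k n_i$) lies in $H^s$ near $\p\Omega$ with $\|(u\cdot\nabla)n\|_{H^s(\Omega_{3\kappa})}\le C\|u\|_{H^s(\Omega)}$; and by the elliptic estimate $\|f\|_{H^{s+1}(\Omega)}\le C\|z\|_{H^s(\Omega)}$ recalled above, $\|\P z\|_{H^s(\Omega)}=\|z-\nabla f\|_{H^s(\Omega)}\le C\|z\|_{H^s(\Omega)}$. Taking traces, using the trace theorem $H^s(\Omega)\to H^{s-\mez}(\p\Omega)$ and the fact that $H^{s-\mez}(\p\Omega)$ is an algebra (since $s-\mez>\frac{d-1}{2}$), we obtain
\begin{align*}
\|[\P, u\cdot\nabla]z\cdot n\|_{H^{s-\mez}(\p\Omega)}
&=\|(\P z)\cdot(u\cdot\nabla)n\|_{H^{s-\mez}(\p\Omega)}\\
&\le C\|\P z\|_{H^{s-\mez}(\p\Omega)}\|(u\cdot\nabla)n\|_{H^{s-\mez}(\p\Omega)}
\le C\|z\|_{H^s(\Omega)}\|u\|_{H^s(\Omega)},
\end{align*}
which is \eqref{est:hn}. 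The main obstacle is precisely the cancellation in the first step: a priori $[\P, u\cdot\nabla]z$ belongs only to $H^{s-1}(\Omega)$ (because of the terms $(u\cdot\nabla)(\nabla f)$ and $\nabla g$), so its normal trace is controlled only in $H^{s-3/2}(\p\Omega)$, half a derivative short of the claim; that missing half-derivative is supplied entirely by the boundary condition $u\cdot n=0$, which turns the two normal derivatives of $f$ into two tangential derivatives of the datum $z\cdot n$ that exactly cancel the contribution of $g$, leaving behind only the benign lower-order term $(\P z)\cdot(u\cdot\nabla)n$. Everything after identifying this formula is standard.
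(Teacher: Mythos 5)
Your proof is correct and follows essentially the same route as the paper's: starting from $[\P,u\cdot\nabla]z=(u\cdot\nabla)(\nabla f)-\nabla g$, exploiting $u\cdot n|_{\p\Omega}=0$ so that $u\cdot\nabla$ acts tangentially on the boundary, and using the matching Neumann data $\nabla f\cdot n=z\cdot n$ to cancel the top-order terms, arriving at exactly the paper's formula $[\P,u\cdot\nabla]z\cdot n=[u\otimes(z-\nabla f)]:\nabla n=(z-\nabla f)\cdot(u\cdot\nabla)n$ on $\p\Omega$. The paper organizes the product-rule bookkeeping via the explicit decomposition $u=u_n n+\sum_j u_{\tau_j}\tau_j$ and applies the trace inequality to the bulk quantity $u\otimes(z-\nabla f)$ before using that $H^s(\Omega)$ is an algebra, while you trace each factor first and use that $H^{s-1/2}(\p\Omega)$ is an algebra; these are cosmetic differences.
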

\begin{proof} We use the decomposition $u = u_n n + \sum_{j=1}^{d-1}u_{\tau_j} \tau_j$  in $\Omega_{2\kappa}$. Then we compute 
\begin{align*}
(u\cdot \nabla)z\cdot n
&=(u\cdot \nabla)(z\cdot n)-(u\otimes z):\nabla n\\
&=u_n (n\cdot \nabla)(z\cdot n)+\sum_{j=1}^{d-1}u_{\tau_j}(\tau_j\cdot \nabla)(z\cdot n)-(u\otimes z):\nabla n
\end{align*}
in $\Omega_{2\kappa}$. Since $u\cdot n=0$ on $\p\Omega$, we have $u_n=0$ on $\p\Omega$. Taking the trace of the above equation on $\p\Omega$ and recalling \eqref{cmt:elliptic2}, 
we get 
$$
\nabla g\cdot n = (u\cdot \nabla)z\cdot n=\sum_{j=1}^{d-1}u_{\tau_j}\p_{\tau_j} (z\cdot n)-(u\otimes z):\nabla n\quad\text{on}~\p\Omega.
$$
Similarly, on $\partial \Omega$, we have \begin{align*}
(u\cdot \nabla)(\nabla f)\cdot n
&=\sum_{j=1}^{d-1}u_{\tau_j} \p_{\tau_j} (\nabla f\cdot n)-(u\otimes \nabla f):\nabla n.
\end{align*}
Recalling \eqref{comm} and using the boundary condition \eqref{cmt:elliptic11}, which gives $\partial_{\tau_j} (\nabla f \cdot n) = \partial_{\tau_j} (z\cdot n)$ on $\p\Omega$, we obtain 
$$\begin{aligned}
~[\P, u\cdot\nabla]z\cdot n &=(u\cdot \nabla)(\nabla f) \cdot n-\nabla g\cdot n = [u\otimes (z-\nabla f)]:\nabla n 
\end{aligned}
$$
on $\partial \Omega$. Using the trace inequality, we bound
\begin{align*}
\| [\P, u\cdot\nabla]z\cdot n\|_{H^{s-\mez}(\p\Omega)}&\le C\|u\otimes (z-\nabla f)\|_{H^s(\Omega)}\\
&\le  C\|u\|_{H^s(\Omega)}\big(\| z\|_{H^s(\Omega)}+\|f\|_{H^{s+1}(\Omega)}\big)
\end{align*}
which gives \eqref{est:hn}, upon recalling the elliptic estimates $\| f\|_{H^{s+1}} \le C \| z\|_{H^s}$.
\end{proof}
\subsection{Commutators between the Leray projector and tangential derivatives}\label{sec:tn}

\begin{prop}\label{prop:estuO}
Let  $m\ge 2$ be an integer.  There exists a constant $C>0$ such that 
\[
\| [P, \P] u\|_{L^2(\Omega)}\le C\| u\|_{H^{m-1}(\Omega)}
\]
for any $P\in \{\Pi_{j=1}^m\p_{\sigma_j}: \sigma_j\in \{\tau_1,...,\tau_{d-1}\}\}$ and any vector field $u\in H^{m-1}(\Omega)$.
\end{prop}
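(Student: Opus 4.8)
The plan is to deduce the bound from the elliptic div-curl-boundary estimate \eqref{bd-h} applied at the $L^2$ level, after finding a clean formula for the commutator. Writing $\P u = u - \na p$ with $p$ as in \eqref{def-P}, and $\P(Pu)=Pu-\na q$ where $q$ solves $\Delta q=\dive(Pu)$ in $\Omega$ and $\frac{\p q}{\p n}=(Pu)\cdot n$ on $\p\Omega$, subtraction yields the identity
\[
[P,\P]u=\na q-P\na p.
\]
I would prove the estimate first for smooth $u$ (so that all traces below are classical) and then pass to $u\in H^{m-1}(\Omega)$ by density, in each case bounding $\|[P,\P]u\|_{L^2(\Omega)}$ through \eqref{bd-h} by $\|\dive([P,\P]u)\|_{H^{-1}}+\|\curl([P,\P]u)\|_{H^{-1}}+\|([P,\P]u)\cdot n\|_{H^{-\mez}(\p\Omega)}$.

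For the divergence: since $\Delta q=\dive(Pu)$ and $Pu=P\P u+P\na p$, one has $\dive([P,\P]u)=\dive(P\P u)=[\dive,P]\P u$ because $\dive\P u=0$. Each $[\dive,\p_{\sigma_j}]$ is a first-order operator with smooth coefficients, so $[\dive,P]$ has order at most $m$, whence $\|\dive([P,\P]u)\|_{H^{-1}}\le C\|\P u\|_{H^{m-1}}\le C\|u\|_{H^{m-1}}$, using that $\P$ is bounded on $H^{m-1}(\Omega)$ by elliptic regularity for \eqref{def-P}. For the curl: $\curl\na q=\curl\na p=0$, so $\curl([P,\P]u)=-[\curl,P]\na p$; the same order count together with $\|\na p\|_{H^{m-1}}\le C\|p\|_{H^m}\le C\|u\|_{H^{m-1}}$ gives $\|\curl([P,\P]u)\|_{H^{-1}}\le C\|u\|_{H^{m-1}}$.

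The heart of the matter is the normal trace. Writing $P_S$ for the ordered subproduct of the $\p_{\sigma_j}$ with $j\in S\subseteq\{1,\dots,m\}$, the Leibniz rule gives $P(u_in_i)=\sum_{S}(P_Su_i)(P_{S^c}n_i)$, with the term $S=\{1,\dots,m\}$ equal to $(Pu)_i\,n_i$; summing over $i$, subtracting the analogous expansion of $P\na p$, and using $\na q\cdot n=(Pu)\cdot n$ on $\p\Omega$, one obtains on $\p\Omega$
\[
([P,\P]u)\cdot n=P\bigl(u\cdot n-\na p\cdot n\bigr)-\sum_{i}\sum_{S\subsetneq\{1,\dots,m\}}\bigl(P_S(\P u)_i\bigr)\bigl(P_{S^c}n_i\bigr).
\]
The first term vanishes on $\p\Omega$: each $\p_{\sigma_j}$ is tangent to $\p\Omega$, so the trace of $P$ applied to a function depends only on that function's trace, and $u\cdot n=\na p\cdot n=\frac{\p p}{\p n}$ there by \eqref{def-P}. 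Every remaining term carries at most $m-1$ tangential derivatives of a component of $\P u\in H^{m-1}(\Omega)$ times a smooth function; interpreting these as genuine differential operators on $\p\Omega$ acting on the trace $(\P u)_i|_{\p\Omega}\in H^{m-\frac32}(\p\Omega)$ and noting $m-\frac32-(m-1)=-\mez$, each lies in $H^{-\mez}(\p\Omega)$ with norm $\le C\|u\|_{H^{m-1}(\Omega)}$. This controls $\|([P,\P]u)\cdot n\|_{H^{-\mez}(\p\Omega)}$ and closes the proof via \eqref{bd-h}. (If $\Omega$ is not simply connected, the finite-dimensional harmonic-field correction in \eqref{bd-h} is handled routinely: such fields are smooth, $\na q$ is $L^2$-orthogonal to them, and $P\na p$ pairs with them through $\|p\|_{H^1}\le C\|u\|_{H^{m-1}}$.)

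The main obstacle is exactly the cancellation of the leading boundary term $P(u\cdot n-\na p\cdot n)$: this is the only place where the hypothesis that $P$ is a product of \emph{tangential} derivatives is used, and were $P$ to contain even one normal derivative this term would be of order $m$ in $u$, i.e.\ of the same order as the quantities being estimated, so the one-derivative gain would be lost. Everything else is routine bookkeeping with the Leibniz rule, elliptic regularity for \eqref{def-P}, and the trace theorem.
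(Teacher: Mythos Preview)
Your overall strategy—write $[P,\P]u=\na q-P\na p$, compute its divergence, curl, and normal trace, and observe that the top-order boundary term $P\bigl((u-\na p)\cdot n\bigr)$ vanishes on $\p\Omega$ because $P$ is purely tangential—is correct in spirit and isolates exactly the right cancellation. It is also genuinely different from the paper's route: the paper does not use the vector div--curl--trace estimate at all here, but instead argues by induction on $m$, reducing to the single commutator $[\p_\tau,\P]$, writing $[\p_\tau,\P]v=\na(g-\p_\tau f)-[\p_\tau,\na]f$, and then applying \emph{scalar} $H^1$ elliptic estimates to the Neumann problem satisfied by $g-\p_\tau f$ (equations \eqref{cmt:t:eq}--\eqref{cmt:t:bc}). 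Your approach is more direct and closer in flavor to the proof of Theorem~\ref{theo:cmt}; the paper's approach is more pedestrian but stays entirely within standard elliptic theory.

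There is, however, a genuine gap: you invoke \eqref{bd-h} at the $L^2$ level, i.e.\ with $s=0$ and $H^{-1}$ norms on the right. The paper only states and uses \eqref{bd-h} for $s>1+\frac d2$, and the $s=0$ version is delicate. The problem is that $H^{-1}(\Omega)=(H^1_0(\Omega))^*$, not $(H^1(\Omega))^*$. If one tries to prove the $L^2$ estimate via the Helmholtz decomposition $h=\na\phi+w$, the curl part $w$ is fine (it sits in $H^1_0$-type spaces and pairs correctly with $\curl h\in H^{-1}$), but the gradient part requires pairing $\dive h$ against the Neumann potential $\phi\in H^1(\Omega)\setminus H^1_0(\Omega)$, and this pairing is \emph{not} controlled by $\|\dive h\|_{H^{-1}}$ alone. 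So the inequality you need is not an immediate consequence of anything in the paper, and as written the argument does not close.

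A clean fix that preserves your structure: instead of applying an abstract vector estimate, write $[P,\P]u=\na(q-Pp)-[P,\na]p$ (the second term is already $\le C\|p\|_{H^m}\le C\|u\|_{H^{m-1}}$ in $L^2$) and estimate the \emph{scalar} $r:=q-Pp$ directly via the $H^1$ Neumann energy estimate. You will find $\Delta r=[\dive,P]u-[\Delta,P]p$ and, using your boundary computation, $\p_n r|_{\p\Omega}$ consists only of the lower-order Leibniz terms and $[\p_n,P]p$; each piece lands in the right space for the weak Neumann formulation (the interior source pairs against $r\in H^1$ after one integration by parts, with boundary contributions absorbed by your $H^{-1/2}$ trace bounds). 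This is essentially what the paper does for $m=1$ and then inducts; your version would do it in one shot for general $P$.
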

\begin{proof}
Without loss of generality we consider $P=\p_{\tau_1}^m$. In view of the identity 
\[
[\p_{\tau_1}^{q+1}, \P] u=[\p_{\tau_1}^q, \P] \p_{\tau_1}u+\p_{\tau_1}^q[\p_{\tau_1}, \P]u, \quad q\ge 1
\]
and by induction in $m$, it suffices to prove that
\bq\label{cmt:t:00}
\| [\p_{\tau_1}, \P] \p_{\tau_1}u\|_{L^2(\Omega)}\le C\| u\|_{H^1(\Omega)}
\eq
and
\bq\label{cmt:t:0}
\| [\p_{\tau_1}, \P] u\|_{H^j(\Omega)}\le C\| u\|_{H^{j}(\Omega)}\quad\forall j \ge 1.
\eq
 To this end, for any vector field $v$, we write $\P v=v-\na f$ and $\P (\p_{\tau_1}v)=\p_{\tau_1}v-\na g$ where $f$ and $g$ solve 
\[
\begin{cases}
\Delta f= \dive v\quad\text{in}~\Omega,\\
\frac{\p f}{\p n}=v\cdot n\quad\text{on}~\p\Omega
\end{cases}
\]
and 
\[
\begin{cases}
\Delta g= \dive (\p_{\tau_1}v)\quad\text{in}~\Omega,\\
\frac{\p g}{\p n}=(\p_{\tau_1}v)\cdot n\quad\text{on}~\p\Omega
\end{cases}
\]
respectively. Then 
\bq\label{cmt:t:form}
[\p_{\tau_1}, \P] v=\na g-\p_{\tau_1}\na f=\na(g-\p_{\tau_1}f)-[\p_{\tau_1}, \na]f.
\eq
  We compute
\[
\begin{aligned}
&\Delta \p_{\tau_1}f=\p_{\tau_1}\Delta f+\Delta (\chi_1\tau_1)\cdot \na f+2\na (\chi_1\tau_1): \na\na f,\\
&\dive(\p_{\tau_1}v)=\p_{\tau_1}\dive v+\na v: (\na (\chi_1\tau_1))^T,
\end{aligned}
\]
where $\chi_1$ is defined as in \eqref{def:chi1}. As a consequence, $h:=g-\p_{\tau_1}f$ satisfies
\bq\label{cmt:t:eq}
\Delta h=\na v: (\na (\chi_1\tau_1))^T-\Delta (\chi_1\tau_1)\cdot \na f-2\na (\chi_1\tau_1): \na\na f \quad\text{in}~\Omega.
\eq
Regarding the boundary condition, we have
\[
\begin{aligned}
&\p_{\tau_1}(v\cdot n)=\p_{\tau_1}v\cdot n+\na n: (v\otimes \tau_1),\\
&\p_n(\p_{\tau_1}f)=\na \na f: (n\otimes \tau_1)+\na \tau_1:(\na f\otimes n),\\
&\p_{\tau_1}(\p_nf)=\na \na f: (n\otimes \tau_1)+\na n:(\na f\otimes \tau_1)
\end{aligned}
\]
in $\Omega$. This yields
\bq\label{cmt:t:bc}
\begin{aligned}
\p_nh&=\p_ng-\p_{\tau_1}\p_nf-\na \tau_1:(\na f\otimes n)+\na n:(\na f\otimes \tau_1)\\
&=(\p_{\tau_1}v)\cdot n-\p_{\tau_1}(v\cdot n)-\na \tau_1:(\na f\otimes n)+\na n:(\na f\otimes \tau_1)\\
&=-\na n: (v\otimes \tau_1)-\na \tau_1:(\na f\otimes n)+\na n:(\na f\otimes \tau_1) \quad\text{on}~\p\Omega.
\end{aligned}
\eq
In addition, elliptic estimates combined with trace inequalities 
\[
\| v\|_{H^{\ell-\tdm}(\p\Omega)}\le C\| v\|_{H^{\ell-1}(\Omega)}\quad\forall \ell\ge 2
\]
yield
\bq\label{cmt:ef}
\| f\|_{H^\ell(\Omega)}\le C\| v\|_{H^{\ell-1}(\Omega)}\quad\forall \ell\ge 2.
\eq
{\it Proof of \eqref{cmt:t:0}.} In view of \eqref{cmt:t:eq}, \eqref{cmt:t:bc} we deduce using elliptic estimates, trace inequalities and \eqref{cmt:ef} that for any $\ell\ge 2$, 
\bq\label{cmt:t:esth}
\begin{aligned}
\| h\|_{H^\ell(\Omega)}&\le C\| \na v: (\na (\chi_1\tau_1))^T-\Delta (\chi_1\tau_1)\cdot \na f-2\na (\chi_1\tau_1): \na\na f\|_{H^{\ell-2}(\Omega)}\\
&\quad +C\|-\na n: (v\otimes \tau_1)-\na \tau_1:(\na f\otimes n)+\na n:(\na f\otimes \tau_1)\|_{H^{\ell-\tdm}(\p\Omega)}\\
&\le C\| \na v: (\na (\chi_1\tau_1))^T-\Delta (\chi_1\tau_1)\cdot \na f-2\na (\chi_1\tau_1): \na\na f\|_{H^{\ell-2}(\Omega)}\\
&\quad +C\|-\na n: (v\otimes \tau_1)-\na \tau_1:(\na f\otimes n)+\na n:(\na f\otimes \tau_1)\|_{H^{\ell-1}(\Omega)}\\
&\le C\| v\|_{H^{\ell-1}(\Omega)}+C\|  f\|_{H^{\ell}(\Omega)}\\
&\le C'\| v\|_{H^{\ell-1}(\Omega)}.
\end{aligned}
\eq
Note that the trace inequality used in the second inequality in \eqref{cmt:t:esth} does not hold when $\ell=1$. Now for any $j\ge 1$ using \eqref{cmt:t:form}, \eqref{cmt:ef} and \eqref{cmt:t:esth} with $\ell=j+1\ge 2$ together with the estimate
\[
\| [\p_{\tau_1}, \na]f\|_{H^j(\Omega)}\le C\| f\|_{H^{j+1}(\Omega)}
\]
we obtain 
\[
\begin{aligned}
\| [\p_{\tau_1}, \P]v\|_{H^j(\Omega)}&\le \|  h\|_{H^{j+1}(\Omega)}+\| [\p_{\tau_1}, \na]f\|_{H^j(\Omega)}\\
&\le C\| v\|_{H^j(\Omega)}+C\| f\|_{H^{j+1}(\Omega)}\\
&\le C'\| v\|_{H^j(\Omega)}
\end{aligned}
\]
which is the desired estimate \eqref{cmt:t:0} if we set $v=u$. 
\vskip 0.5cm
{\it Proof of  \eqref{cmt:t:00}.} Again, we use the equations \eqref{cmt:t:eq}, \eqref{cmt:t:bc} with $v=\p_{\tau_1}u$ and $H^1$ elliptic estimate for the Neumann problem to have
\[
\begin{aligned}
\| h\|_{H^1(\Omega)}&\le C\| \na v: (\na (\chi_1\tau_1))^T-\Delta (\chi_1\tau_1)\cdot \na f-2\na (\chi_1\tau_1): \na\na f\|_{H^{-1}(\Omega)}\\
&\quad +C\|-\na n: (v\otimes \tau_1)-\na \tau_1:(\na f\otimes n)+\na n:(\na f\otimes \tau_1)\|_{H^{-\mez}(\p\Omega)}\\
&\le C'\| v\|_{L^2(\Omega)}+C'\| f\|_{H^1(\Omega)}+C'\| v\|_{H^{-\mez}(\p\Omega)}+C'\| \na f\|_{H^{-\mez}(\p\Omega)}.
\end{aligned}
\]
Since $v=\p_{\tau_1}u$ we have $\| v\|_{L^2(\Omega)}\le C\| u\|_{H^1(\Omega)}$ and
\[
\| v\|_{H^{-\mez}(\p\Omega)}\le C\| u\|_{H^{\mez}(\p\Omega)}\le C'\| u\|_{H^1(\Omega)}.
\]
Moreover, using \eqref{decompose:na1} and the Neumann boundary condition for $f$ we can write
\[
\na f=\sum_{j=1}^{d-1}\tau_j\p_{\tau_j}f +n\p_nf =\sum_{j=1}^{d-1}\tau_j\p_{\tau_j}f+n(\p_{\tau_1}u \cdot n)\quad\text{on}~\p\Omega.
\]
This implies 
\[
\begin{aligned}
\| \na f\|_{H^{-\mez}(\p\Omega)}&\le C\sum_{j=1}^{d-1}\| \p_{\tau_j}f\|_{H^{-\mez}(\p\Omega)}+C\| \p_{\tau_1}u\|_{H^{-\mez}(\p\Omega)}\\
&\le C'\| f\|_{H^{\mez}(\p\Omega)}+C\| u\|_{H^{\mez}(\p\Omega)}\\
&\le C''\| f\|_{H^1(\Omega)}+C\| u\|_{H^1(\p\Omega)}.
\end{aligned}
\]
Thus, we obtain
\[
\| h\|_{H^1(\Omega)}\le C\| u\|_{H^1(\Omega)}+C\| f\|_{H^1(\Omega)}.
\]
The $H^1$ elliptic estimate for $f$ gives
\[
\begin{aligned}
\| f\|_{H^1(\Omega)}&\le C\| \p_{\tau_1}u\|_{L^2(\Omega)}+C\| \p_{\tau_1}u\cdot n\|_{H^{-\mez}(\p\Omega)}\\
 &\le C\| \p_{\tau_1}u\|_{L^2(\Omega)}+C'\| u\|_{H^{\mez}(\p\Omega)}\\
 &\le C''\| u\|_{H^1(\Omega)}.
\end{aligned}
\]
Consequently
\[
\| h\|_{H^1(\Omega)}\le C\| u\|_{H^1(\Omega)}
\]
which combined with the commutator estimate 
\[
\| [\p_{\tau_1}, \na]f\|_{L^2(\Omega)}\le C\| f\|_{L^2(\Omega)}\le C\| f\|_{H^1(\Omega)}
\]
  completes the proof of \eqref{cmt:t:00}.
\end{proof}
Next we fix a cutoff function $\chi_2:\overline\Omega\to [0, 1]$  satisfying 
\bq\label{def:chi2}
\chi_2\equiv 0\quad\text{in}~ \Omega_{\kappa},\qquad \chi_2\equiv 1\quad\text{in}~ \overline\Omega\setminus \Omega_{2\kappa}.
\eq
\begin{prop}\label{cmt:Pna}
Let $m\ge 1$ be an integer. There exists a constant $C>0$ such that
\[
\|[\chi_2D^m, \P]u\|_{L^2(\Omega)}\le C\| u\|_{H^{m-1}(\Omega)}
\]
for any vector field $u\in H^{m-1}(\Omega)$. 
\end{prop}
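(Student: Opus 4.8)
\emph{Proof proposal.} The plan is to exploit that $\chi_2$ is flat near $\p\Omega$, so that $\chi_2 D^m$ is supported in the interior, where the Leray projector behaves like an order‑zero Fourier multiplier and should commute with derivatives up to one order; I would make this quantitative through the Neumann potentials. Write $\P u=u-\na p$ and $\P(\chi_2 D^m u)=\chi_2 D^m u-\na q$, where $p$ solves $\Delta p=\dive u$ in $\Omega$, $\frac{\p p}{\p n}=u\cdot n$ on $\p\Omega$, and $q$ solves $\Delta q=\dive(\chi_2 D^m u)$ in $\Omega$, $\frac{\p q}{\p n}=(\chi_2 D^m u)\cdot n$ on $\p\Omega$. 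Since $\na p=u-\P u$ and $\P$ is bounded on $H^{m-1}(\Omega)$ for $m\ge1$, one has $\|p\|_{H^m(\Omega)}\le C\|u\|_{H^{m-1}(\Omega)}$ modulo constants. As $D^m$ has constant coefficients,
\[
[\chi_2 D^m,\P]u=\na q-\chi_2 D^m\na p=\na\big(q-\chi_2 D^m p\big)+(\na\chi_2)\,D^m p=:\na w+(\na\chi_2)\,D^m p ,
\]
and the second term is harmless: $\|(\na\chi_2)D^m p\|_{L^2(\Omega)}\le C\|p\|_{H^m(\Omega)}\le C\|u\|_{H^{m-1}(\Omega)}$.

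It remains to bound $\na w$, for which two observations are crucial. First, since $\chi_2\equiv0$ on the neighbourhood $\Omega_\kappa$ of $\p\Omega$, both $\chi_2 D^m u$ and $\chi_2 D^m p$ vanish near $\p\Omega$; hence $\frac{\p q}{\p n}=0$ and $\frac{\p}{\p n}(\chi_2 D^m p)=0$ on $\p\Omega$, so $w$ satisfies the \emph{homogeneous} Neumann condition $\na w\cdot n=0$ on $\p\Omega$. Second, using $\Delta p=\dive u$ and $\Delta D^m=D^m\Delta$,
\[
\Delta q=\chi_2 D^m\dive u+\na\chi_2\cdot D^m u,
\]
\[
\Delta(\chi_2 D^m p)=\chi_2 D^m\dive u+2\na\chi_2\cdot\na(D^m p)+(\Delta\chi_2)\,D^m p ,
\]
so the top‑order term $\chi_2 D^m\dive u$ cancels and $\Delta w=\na\chi_2\cdot D^m u-2\,\na\chi_2\cdot\na(D^m p)-(\Delta\chi_2)\,D^m p=:F$. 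Now $\na\chi_2$ and $\Delta\chi_2$ are supported in the compact subset $\overline{\Omega_{2\kappa}}\setminus\Omega_\kappa$ of $\Omega$, while $D^m u\in H^{-1}(\Omega)$, $\na(D^m p)\in H^{-1}(\Omega)$ and $D^m p\in L^2(\Omega)$ because $p\in H^m(\Omega)$. Hence $F$ lies in the dual of $H^1(\Omega)$ with $\|F\|_{(H^1(\Omega))^\ast}\le C\big(\|u\|_{H^{m-1}(\Omega)}+\|p\|_{H^m(\Omega)}\big)\le C\|u\|_{H^{m-1}(\Omega)}$, and $\int_\Omega F\,dx=0$. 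Testing the Neumann problem $\Delta w=F$, $\na w\cdot n=0$, against $w$ (normalized to zero mean) and using Poincaré then gives $\|\na w\|_{L^2(\Omega)}\le C\|F\|_{(H^1(\Omega))^\ast}\le C\|u\|_{H^{m-1}(\Omega)}$; one first produces the variational solution and identifies it with $q-\chi_2 D^m p$ by uniqueness. Combining the two bounds yields the claim.

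\emph{Main obstacle.} The real content lies in the two structural points rather than in any delicate estimate: (i) that $\chi_2 D^m\dive u$ is common to $\Delta q$ and $\Delta(\chi_2 D^m p)$ and hence drops out of $\Delta w$ — which is exactly why the surviving source $F$, although it contains $D^m u$, is controlled by $\|u\|_{H^{m-1}}$ after passing to $(H^1)^\ast$; and (ii) that the flatness of $\chi_2$ near $\p\Omega$ makes $w$ carry zero Neumann data, so only the elementary interior $H^1$ estimate is needed and no commutator between $\P$ and boundary derivatives has to be analysed — in contrast with the tangential commutators of Proposition~\ref{prop:estuO}. Everything else (elliptic regularity for $p$, boundedness of $\P$ on $L^2$ and $H^{m-1}$, and multiplication by smooth compactly supported functions mapping $H^{-1}$ to $H^{-1}$) is standard.
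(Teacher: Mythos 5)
Your proposal is correct and follows essentially the same route as the paper: introduce the two Neumann potentials $p$ and $q$ for $u$ and $\chi_2D^m u$, isolate the harmless term $(\na\chi_2)D^m p$, set $w=q-\chi_2D^m p$, compute $\Delta w$ and watch the top--order term $\chi_2D^m\dive u$ cancel, then estimate $\|\na w\|_{L^2}$ through the weak formulation of the Neumann problem with forcing in a negative Sobolev space. (The paper organizes this by rewriting the source in divergence form to exhibit the $H^{-1}$ bound; you phrase the same thing as an $(H^1)^*$ bound plus Lax--Milgram. Same content.)

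One small remark: at the corresponding step the paper asserts ``since $\chi_2\equiv 0$ near $\p\Omega$, $h\equiv 0$ near $\p\Omega$.'' That is not literally accurate — the Neumann solution $g$ (your $q$) is harmonic near $\p\Omega$ with zero normal derivative but need not vanish there. The correct statement is the one you make: both $\chi_2 D^m p$ and $q$ carry zero Neumann data on $\p\Omega$, so $w$ solves a homogeneous Neumann problem. This is precisely what allows the variational $H^1$ estimate without any boundary contributions, and your write-up makes this explicit (together with the compatibility $\int_\Omega F\,dx=0$ and the mean-zero normalization needed for Poincaré). So your version slightly tightens the paper's exposition while being the same proof.
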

\begin{proof}
Without loss of generality we consider $D^m=\p_1^m$. We have 
\[
\begin{aligned}
[\chi_2\p_1^m, \P]u&=\chi_2\p_1^m(u-\na f)-[\chi_2\p_1^mu-\na g]\\
&=\chi_2\p_1^m\na f-\na g\\
&=\na(\chi_2\p_1^mf-g)-\na \chi_2\p_1^mf.
\end{aligned}
\]
 where $f$ and $g$ solve
 \[
\begin{cases}
\Delta f= \dive u\quad\text{in}~\Omega,\\
\frac{\p f}{\p n}=u\cdot n\quad\text{on}~\p\Omega
\end{cases}
\]
and 
\[
\begin{cases}
\Delta g= \dive (\chi_2\p_{\tau_1}^mu)\quad\text{in}~\Omega,\\
\frac{\p g}{\p n}=(\chi_2\p_{\tau_1^m}u)\cdot n\quad\text{on}~\p\Omega
\end{cases}
\]
respectively. By elliptic estimates for $f$ we have
\bq\label{cmt:Pna1}
\| \na \chi_2\p_1^mf\|_{L^2(\Omega)}\le C\| u\|_{H^{m-1}(\Omega)}.
\eq
Setting $h=\chi_2\p_1^m\na f-g$ we compute
\[
\begin{aligned}
\Delta h&=\Delta\chi_2\p_1^mf+2\na\chi_2\cdot \na\p_1^mf-\na\chi_2\cdot \p_1^mu\\
&=\Delta\chi_2\p_1^mf+2\dive(\na\chi_2 \p_1^mf)-2\Delta \chi_2\p_1^mf-\p_j(\na\chi_2\cdot \p_1^{m-1}u)+\na \p_1\chi_2\cdot \p_1^{m-1}u.
\end{aligned}
\]
On the other hand, since $\chi_2\equiv 0$ near $\p\Omega$, $h\equiv 0$ near $\p\Omega$. Thus, standard elliptic estimates give
\bq\label{cmt:Pna2}
\| h\|_{H^1(\Omega)}\le C\| u\|_{H^{m-1}(\Omega)}.
\eq
A combination of \eqref{cmt:Pna1} and \eqref{cmt:Pna2} concludes the proof.
\end{proof}
\section{Proof of Theorem \ref{theo:globalT}}\label{sec-proofT}


Let us start with a priori estimates for the perturbation $z = y - y_*$, which solves 
\bq\label{eq:z}
\begin{aligned}
\p_tz+u\cdot\nabla y_*+u\cdot \nabla z&=0,\qquad
u=\P z
\end{aligned}
\eq
in $\Omega=[0, L]^d$. We consider $p_*$ such that $\na y_*=\na^2p_*$ is $L$-periodic. In what follows, $u$ and $z$ are  $L$-periodic smooth solutions to \eqref{eq:z}, and $s$ is real number satisfying $s>1 + \frac{d}{2}$. We shall denote by $\T^d$ the box $\Omega$ with periodic boundary conditions. We have the following. 

\begin{lemm}\label{lemm:z}
We have
\bq\label{est:z}
\mez\frac{d}{dt}\Vert z(t)\Vert_{H^s(\T^d)}^2\le C_1 ( 1 + \Vert z(t)\Vert_{H^s(\T^d)} ) \Vert u(t)\Vert_{H^s(\T^d)}\Vert z(t)\Vert_{H^s(\T^d)}, 
\eq
for some constant $C_1$ depending only on $s, d$, and $\Vert y_*\Vert_{H^{s+1}}$. 
\end{lemm}
\begin{proof}
Denote $J^s=(1-\Delta)^\frac{s}{2}$. Note that $\Vert f\Vert_{H^s(\T^d)}=\Vert J^sf\Vert_{L^2(\T^d)}$. Applying $J^s$ to \eqref{eq:z}, multiplying the resulting equation by $J^sz$ and integrating in space we obtain
\begin{align*}
\mez\frac{d}{dt}\int_{\T^d} |J^sz|^2 \; dx&=-\int_{\T^d} \Big[ J^s z\cdot J^s(u\cdot \nabla z) + J^s(u\cdot \nabla y_*)\cdot J^sz\Big] \; dx\\
&=-\int_{\T^d} \Big[ J^sz\cdot \big([J^s,u]\cdot \nabla z\big)dx + J^sz\cdot \big(u\cdot \nabla J^sz\big) + J^s(u\cdot \nabla y_*)\cdot J^sz\Big] \; dx\\
&=-\int_{\T^d}\Big[ J^sz\cdot \big([J^s,u]\cdot \nabla z\big) + \frac12u\cdot \nabla |J^sz|^2dx + J^s(u\cdot \nabla y_*)\cdot J^sz \Big]\; dx\\
&=-\int_{\T^d}\Big[ J^sz\cdot \big([J^s,u]\cdot \nabla z\big) + J^s(u\cdot \nabla y_*)\cdot J^sz \Big]\; dx
\end{align*}
where we used the fact that $u=\P z$ is divergence free. Since $s>1+\frac{d}{2}$, the Kato-Ponce's commutator estimate yields
\[
\Vert [J^s,u]\cdot \nabla z\Vert_{L^2}\le C\Vert u\Vert_{H^s}\Vert z\Vert_{H^s}.
\]
On the other hand, we have 
\[
\la \int_{\T^d}J^s(u\cdot \nabla y_*)\cdot J^sz\ra\le C\Vert y_*\Vert_{H^{s+1}}\Vert u\Vert_{H^s}\Vert z\Vert_{H^s}.
\]
Putting this altogether leads to \eqref{est:z}.
\end{proof}

\begin{lemm}\label{lemm:u} Let $\theta_0$ be the constant as in \eqref{convexity}. There hold 
\begin{align}\label{L2decay}
\mez\frac{d}{dt}\Vert u(t)\Vert_{L^2(\T^d)}^2+ \theta_0\Vert u(t)\Vert_{L^2(\T^d)}^2 &\le C_2\Vert u(t)\Vert_{L^2(\T^d)}^2\Vert z(t)\Vert_{H^s(\T^d)} 
\\
\frac{d}{dt}\Vert u(t)\Vert_{H^s(\T^d)}^2+\theta_0\Vert u(t)\Vert_{H^s(\T^d)}^2 &\le C_2\Vert u(t)\Vert_{H^s(\T^d)}^2\Vert z(t)\Vert_{H^s(\T^d)}+C_2\Vert u(t)\Vert_{L^2(\T^d)}^2   \label{est:u1}
\end{align}
for some constant $C_2$ depending only on $s,d,$ and $\| y_*\|_{H^{s+1}}$.  
\end{lemm}
\begin{proof}
Applying the Leray projection to the first equation in \eqref{eq:z} gives
\bq\label{eq:u:0}
\p_t u+\P(u\cdot \nabla y_*)+\P(u\cdot \nabla z)=0.
\eq
A direct $H^s$ energy estimate for $u$ from this equation requires a control for $z$ in $H^{s+1}$, and thus the estimates cannot be closed. To resolve the issue, we first commute the  with $u$ to have
\bq\label{eq:u}
\p_tu+\P(u\cdot \nabla y_*)+u\cdot\nabla u+[\P, u\cdot\nabla] z=0
\eq
We now perform an $H^s$ energy estimate for this equation. As in the proof of Lemma \ref{lemm:z} we have
\[
\begin{aligned}
\mez&\frac{d}{dt}\int_{\T^d} |J^su|^2\; dx+\int_{\T^d}J^su\cdot J^s\P (u\cdot \nabla y_*) \; dx
\\&=-\int_{\T^d}J^su\cdot \big([J^s,u]\cdot \nabla u\big)dx-\int_{\T^d}J^su\cdot \big(u\cdot \nabla J^su\big)dx
-\int_{\T^d}J^su\cdot J^s\big([\P, u\cdot\nabla] z\big)dx\\
&=-\int_{\T^d}J^su\cdot \big([J^s,u]\cdot \nabla u\big)dx-\mez\int_{\T^d}u\cdot \nabla |J^su|^2dx
-\int_{\T^d}J^su\cdot J^s\big([\P, u\cdot\nabla] z\big)dx\\
&=-\int_{\T^d}J^su\cdot \big([J^s,u]\cdot \nabla u\big)dx-\int_{\T^d}J^su\cdot J^s\big([\P, u\cdot\nabla] z\big)dx
\end{aligned}
\]
where the fact that $u$ is divergence-free  was used to cancel out the term $\int_{\T^d}u\cdot \nabla |J^su|^2dx$. In addition, using the facts that $[J^s, \P]=0$ and $\P$ is self-adjoint in $L^2$, we obtain
\bq
\begin{aligned}\label{damping}
\int_{\T^d}J^su\cdot J^s\P (u\cdot \nabla y_*)&=\int_{\T^d}J^su\cdot \P J^s (u\cdot \nabla y_*) =\int_{\T^d}J^su\cdot J^s (u\cdot \nabla y_*)\\
&=\int_{\T^d}J^su\cdot (J^su \cdot \nabla y_*)+\int_{\T^d}J^su\cdot \big([J^s, \nabla y_*]\cdot u\big) .
\end{aligned}
\eq
It follows from the convexity assumption \eqref{convexityT} that 
\bq\label{low-Hs}
\int_{\T^d}J^su\cdot (J^su \cdot\nabla y_*)=\int_{\T^d}J^su\cdot (J^su \cdot\nabla^2 p_*)\ge \theta_0\Vert u\Vert_{H^s}^2.
\eq
Using Kato-Ponce's commutator estimate gives
\[
\Vert [J^s,u]\cdot \nabla u\Vert_{L^2}\le C\Vert u\Vert_{H^s}\Vert u\Vert_{H^s}
\]
and 
\[
\Vert [J^s, \nabla y_*]\cdot u\Vert_{L^2}\le C\Vert  \nabla y_*\Vert_{W^{1, \infty}}\Vert u\Vert_{H^{s-1}}+C\Vert  \nabla y_*\Vert_{H^s}\Vert u\Vert_{L^\infty}\le C\Vert y_*\Vert_{H^{s+1}}\Vert u\Vert_{H^{s-1}}.
\]
By virtue of  Theorem \ref{theo:cmt} for $\Omega=\T^d$ we have
\[
\Vert [\P, u\cdot\nabla] z\Vert_{H^s}\le C\Vert u\Vert_{H^s}\Vert z\Vert_{H^s}.
\]
Combining, and using $\|u\|_{H^s} \le \|z\|_{H^s}$, we thus obtain 
\bq\label{est:u}
\frac{d}{dt}\Vert u(t)\Vert_{H^s}^2+2\theta_0\Vert u(t)\Vert_{H^s}^2\le C_2\Vert u(t)\Vert_{H^s}^2\Vert z(t)\Vert_{H^s}+C_3\Vert  y_*\Vert_{H^{s+1}}\Vert u(t)\Vert_{H^s}\Vert u(t)\Vert_{H^{s-1}} 
\eq
in which using interpolation and Young's inequality, the last term can be estimated by 
\begin{align*}
\Vert u(t)\Vert_{H^s}\Vert u(t)\Vert_{H^{s-1}}&\le \Vert u(t)\Vert_{H^s}^{2-\frac{1}{s}}\Vert u(t)\Vert_{L^2}^{\frac{1}{s}} \le \gamma \Vert u(t)\Vert_{H^s}^2+C_\gamma \Vert u(t)\Vert_{L^2}^2 
\end{align*}
for $\gamma>0$. Taking $\gamma$ sufficiently small, we get the claimed $H^s$ estimates in \eqref{est:u1} directly from \eqref{est:u}.

 Finally, by means of the  Sobolev embedding $\|\nabla z \|_{L^\infty} \le C \| z\|_{H^s}$, the fact that $\P$ is self-adjoint in $L^2$, and the convexity assumption \eqref{convexity}, an $L^2$ energy estimate for \eqref{eq:u:0} gives the estimate \eqref{L2decay}. This ends the proof of the lemma. 
\end{proof}
For $L$-periodic smooth solutions $(u,z)$ defined on the maximal interval $[0, T^*)$ of \eqref{eq:z}, which exists thanks to the local existence theory in Theorem \ref{theo:lwp}, let us introduce the bootstrap norm
\bq\label{bootstrapnorm}
\mathcal{N}(t) : =\sup_{0\le \tau\le t}\Big(\Vert z(\tau)\Vert_{H^s(\T^d)}^2+ M^2 e^{\frac{\theta_0}{2}\tau}\Vert u(\tau)\Vert_{L^2(\T^d)}^2+ M e^{\frac{\theta_0}{2}\tau}\Vert u(\tau)\Vert_{H^s(\T^d)}^2\Big)
\eq
for some fixed and large $M>0$ and for $t<T^*$.
\begin{prop}\label{prop:bootstrap}
There exist positive constants $\eps, C_*$, depending only on $\theta_0$ and $\Vert y_*\Vert_{H^{s+1}}$, such that whenever $\mathcal{N}(0)<\eps$ we have $\mathcal{N}(t)\le C_*\mathcal{N}(0)$ for all $t<T^*$. 
\end{prop} 
\begin{proof}  We shall prove that 
\begin{equation}\label{claim} \mathcal{N}(t) \le C_0\mathcal{N}(0) + C_0 \mathcal{N}(t)^{3/2} \end{equation}
for all $t<T^*$. The proposition follows directly from the standard continuous induction.   

As for the claim \eqref{claim}, we integrate \eqref{est:z} in time and use the definition of $\mathcal{N}(t)$, yielding 
\begin{equation}\label{bd-zzz}
\begin{aligned}
 \| z(t)\|_{H^s}^2 
 &\le \| z(0)\|_{H^s}^2 + C_1 \int_0^t  ( 1 + \Vert z(\tau)\Vert_{H^s} ) \Vert u(\tau)\Vert_{H^s}\Vert z(\tau)\Vert_{H^s} \; d\tau
 \\&\le \| z(0)\|_{H^s}^2 + C_1M^{-1/2} (1 + \mathcal{N}(t)^{1/2}) \mathcal{N}(t) \int_0^t e^{-\theta_0 \tau/4} \; d\tau
 \\&\le \| z(0)\|_{H^s}^2 + C_3M^{-1/2} (1 + \mathcal{N}(t)^{1/2}) \mathcal{N}(t) . 
\end{aligned}\end{equation}
Next taking $M$ sufficiently large so that $\theta_0M>C_2$, we obtain from \eqref{L2decay} and \eqref{est:u1} that
$$
\frac{d}{dt} (M\Vert u(t)\Vert_{L^2}^2 + \| u(t)\|_{H^s}^2)+ \theta_0 (M\Vert u(t)\Vert_{L^2}^2 + \| u(t)\|_{H^s}^2) \le C_4M \Vert u(t)\Vert_{H^s}^2\Vert z(t)\Vert_{H^s} 
$$
where $C_4=C_2(2+M)$. This yields 
\bq\label{bt:u}
\begin{aligned}
M^2&\Vert u(t)\Vert_{L^2}^2 + M \| u(t)\|_{H^s}^2 
\\&\le 
e^{-\theta_0 t}\Big[ M^2\Vert u(0)\Vert_{L^2}^2 + M \| u(0)\|_{H^s}^2 \Big] 
+ C_4 M \int_0^t e^{-\theta_0 (t-\tau)} \Vert u(\tau)\Vert_{H^s}^2\Vert z(\tau)\Vert_{H^s} \; d\tau 
\\
&\le e^{-\theta_0 t}\Big[ M^2\Vert u(0)\Vert_{L^2}^2 + M \| u(0)\|_{H^s}^2 \Big]  + C_4 \mathcal{N}(t)^{3/2}  \int_0^t e^{-\theta_0 (t-\tau)} e^{-\theta_0\tau/2} \; d\tau 
\\
&\le e^{-\theta_0 t/2}\Big[ M^2\Vert u(0)\Vert_{L^2}^2 + M \| u(0)\|_{H^s}^2 \Big]  + C_5 \mathcal{N}(t)^{3/2} e^{-\theta_0 t/2}.\end{aligned}
\eq
Combining with \eqref{bd-zzz} and choosing again $M$ large, if needed, we obtain the claim \eqref{claim} and hence the proposition. 
\end{proof}
 With the $\eps$ and $C_*$ given in Proposition \ref{prop:bootstrap}, we have proven that $\Vert z(t)\Vert_{H^s(\T^d)}\le C_*\mathcal{N}(0)\le C_*\eps$ and 
$\Vert u(t)\Vert_{H^s(\T^d)} \le C\Vert y_0-y_*\Vert_{H^s(\T^d)}e^{-\frac{\theta_0}{4} t}$ for all time $t <T^*$. Consequently, the solution $z$ of \eqref{eq:z} is global in time and enjoys the same bounds for all $t>0$. Using equation \eqref{eq:z} and the estimates \eqref{stab:est}, \eqref{damping:u} we deduce that $\p_t z\in L^1(0, \infty; H^{s-1}(\T^d))$. This yields
\[
\lim_{t\to\infty} z(x, t)=z_0(x)+\int_0^\infty \p_tz(x, \tau)d\tau:=z_\infty(x) \quad\text{in}~H^{s-1}(\T^d),
\]
and thus 
\[
\lim_{t\to\infty} y(x, t)=y_\infty(x):=z_\infty(x)+y_*(x)\quad\text{in}~H^{s-1}(\Omega),\quad \Omega=[0, L]^d.
\]
Furthermore, 
\bq\label{exp:conv}
\| y(t)-y_\infty\|_{H^{s-1}(\T^d)}=\| z(t)-z_\infty\|_{H^{s-1}(\T^d)}=\| \int_t^\infty u\cdot \na z(\tau)d\tau\|_{H^{s-1}(\T^d)}\le Ce^{-\frac{\tt_0t}{4}}
\eq
for all $t\ge 0$. Using the Leray projection we write 
\[
y(x, t)=u(x, t)+\nabla p(x, t),\quad y_\infty(x)=u_\infty(x)+\nabla p_\infty(x)
\]
 where $u_\infty=\P y_\infty:\Omega\to \Rr^d$ and $p:\Omega \to \Rr$. In view of the Pythagorean identity 
 \[
\| y(t)-y_\infty\|_{L^2(\Omega)}^2=\| u(t)-u_\infty\|_{L^2(\Omega)}^2+\| \nabla p(t)-\nabla p_\infty\|_{L^2(\Omega)}^2
\]
we find that each term on the right hand side converges to $0$ as $t\to \infty$. This, together with the fact that $u(t)\to 0$ in $H^s(\T^d)$, implies that $u_\infty\equiv 0$. Thus, $y_\infty=\nabla p_\infty$ is a gradient and in view of \eqref{exp:conv} we have 
\[
\| y(t)-\na p_\infty\|_{H^{s-1}(\T^d)}\le Ce^{-\frac{\tt_0t}{4}}
\]
 for all $t\ge 0$. As a consequence of this, \eqref{convexityT} and the bound $\| y-y_*\|_{L^\infty(0, \infty; H^s(\T^d))}\le C_*\eps$, if $\eps$ is sufficiently small then $\na^2 p_\infty >0$. Thus, $p_\infty$ is (strictly) convex and $\na p_\infty$ is the optimal rearrangement of $y_0$ by virtue of Brenier's theorem (\cite{Brenier91}). This ends the proof of Theorem \ref{theo:globalT}. 

\section{Proof of Theorem \ref{theo:global}}
\label{sec-proofB}
We now turn to the case when $\Omega$ is a bounded domain with smooth boundary. Recall that the perturbation $z=y-y_*$ obeys 
\bq\label{eq:z:O}
\begin{aligned}
\p_tz+u\cdot\nabla y_*+u\cdot \nabla z&=0,\qquad
u=\P z.
\end{aligned}
\eq 
In what follows, we fix an integer $s>1+\frac d2$.
\begin{lemm}
There exists $C>0$ depending only on $\| y_*\|_{H^{s+1}(\Omega)}$ such that
\bq\label{est:z:O}
\mez\frac{d}{dt}\Vert z(t)\Vert_{H^s(\Omega)}^2\le C_1 ( 1 + \Vert z(t)\Vert_{H^s(\Omega)} ) \Vert u(t)\Vert_{H^s(\Omega)}\Vert z(t)\Vert_{H^s(\Omega)}.
\eq
\end{lemm}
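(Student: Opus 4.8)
The plan is a classical commutator energy estimate in $H^s(\Omega)$; the only place where the structure of the problem enters is in killing the top-order transport contribution via $\dive u=0$ and $u\cdot n|_{\p\Omega}=0$. Working a priori (so that $z$ is as smooth as needed), I would apply $D^\alpha$ for each multi-index with $|\alpha|\le s$ to equation \eqref{eq:z:O}, take the $L^2(\Omega)$ inner product with $D^\alpha z$, and sum:
\[
\mez\frac{d}{dt}\|z\|_{H^s(\Omega)}^2 = -\sum_{|\alpha|\le s}\int_\Omega D^\alpha(u\cdot\na y_*)\cdot D^\alpha z\,dx \;-\;\sum_{|\alpha|\le s}\int_\Omega D^\alpha(u\cdot\na z)\cdot D^\alpha z\,dx =: I+II .
\]
For the linear forcing term $I$: since $s>1+\frac d2>\frac d2$, $H^s(\Omega)$ is a Banach algebra, so $\|u\cdot\na y_*\|_{H^s(\Omega)}\le C\|u\|_{H^s(\Omega)}\|\na y_*\|_{H^s(\Omega)}\le C\|u\|_{H^s(\Omega)}\|y_*\|_{H^{s+1}(\Omega)}$, and Cauchy--Schwarz gives $|I|\le C\|y_*\|_{H^{s+1}(\Omega)}\|u\|_{H^s(\Omega)}\|z\|_{H^s(\Omega)}$. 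This is the ``$1$'' contribution in \eqref{est:z:O} and it fixes the dependence of $C_1$ on $\|y_*\|_{H^{s+1}(\Omega)}$.

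For the transport term $II$, write $u\cdot\na z=\sum_j u_j\p_j z$ and commute $D^\alpha$ past $u\cdot\na$:
\[
\int_\Omega D^\alpha(u\cdot\na z)\cdot D^\alpha z\,dx = \int_\Omega \big([D^\alpha,u\cdot\na]z\big)\cdot D^\alpha z\,dx + \int_\Omega (u\cdot\na D^\alpha z)\cdot D^\alpha z\,dx .
\]
The last integral equals $\mez\int_\Omega u\cdot\na|D^\alpha z|^2\,dx = \mez\int_{\p\Omega}(u\cdot n)|D^\alpha z|^2\,dS - \mez\int_\Omega(\dive u)|D^\alpha z|^2\,dx$, and both terms vanish because $u=\P z$ is divergence-free and tangent to $\p\Omega$; note that this step pairs the vector field $u$ only with the scalar $|D^\alpha z|^2$, so no boundary condition on $z$ (which it does not possess) is required. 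For the commutator, the standard Moser/Kato--Ponce estimate gives, for $|\alpha|\le s$,
\[
\|[D^\alpha,u_j]\p_j z\|_{L^2(\Omega)}\le C\big(\|\na u\|_{L^\infty(\Omega)}\|z\|_{H^s(\Omega)}+\|u\|_{H^s(\Omega)}\|\na z\|_{L^\infty(\Omega)}\big)\le C\|u\|_{H^s(\Omega)}\|z\|_{H^s(\Omega)},
\]
where the embedding $H^{s-1}(\Omega)\hookrightarrow L^\infty(\Omega)$ (valid since $s-1>\frac d2$) is used to absorb $\|\na u\|_{L^\infty}$ and $\|\na z\|_{L^\infty}$. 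Hence $|II|\le C\|u\|_{H^s(\Omega)}\|z\|_{H^s(\Omega)}^2$, the ``$\|z\|_{H^s(\Omega)}$'' contribution. Adding the bounds for $I$ and $II$ and summing over $|\alpha|\le s$ yields \eqref{est:z:O}, with $C_1$ depending only on $\|y_*\|_{H^{s+1}(\Omega)}$ (and on $\Omega,s,d$).

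There is no serious obstacle here — this is a textbook energy estimate. The single point that deserves care is the top-order term in $II$: one must not integrate by parts directly against $D^\alpha z$, but rather move the derivative onto $|D^\alpha z|^2$ and integrate $u$ against it, so that $\dive u=0$ and $u\cdot n|_{\p\Omega}=0$ make both the bulk and the boundary contributions disappear. Everything else is a routine application of product and commutator estimates in Sobolev spaces on a bounded domain (via extension operators, or equivalently the algebra and embedding properties of $H^s(\Omega)$ for $s>1+\frac d2$).
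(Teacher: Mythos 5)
Your proof is correct and follows essentially the same route as the paper: apply derivatives, commute $D^\alpha$ past $u\cdot\nabla$, use $\operatorname{div} u=0$ and $u\cdot n\vert_{\partial\Omega}=0$ to kill the top-order transport contribution via integration by parts on the scalar $|D^\alpha z|^2$, and then invoke the Moser--Kato--Ponce commutator estimate together with the algebra/embedding properties of $H^s(\Omega)$ for the remaining terms. The only cosmetic difference is that the paper treats the $L^2$ level and the order-$s$ derivatives as two separate estimates before combining, whereas you sum over $|\alpha|\le s$ from the start; the content is identical.
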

\begin{proof}
First of all, an $L^2$ estimate for \eqref{eq:z:O} gives
\bq\label{est:z0O}
\mez\frac{d}{dt}\Vert z(t)\Vert_{L^2(\Omega)}^2= -\int_\Omega z\cdot (u\cdot \na y_*)dx\le \| \na y_*\|_{L^\infty(\Omega)}\| u\|_{L^2(\Omega)}\| z\|_{L^2(\Omega)}, 
\eq
where we used the fact that $u\cdot n\vert_{\p\Omega}=0$ to have $\int_\Omega z\cdot (u\cdot \na z)=0$ upon integration by parts. Now let $s$ be an integer greater than $1+d/2$. Recall that $D^s$ denotes any partial derivatives of order $s$. Applying $D^s$ to equation \eqref{eq:z} and arguing as in  the proof of Lemma \ref{lemm:z} yields 
\bq\label{est:zO}
\mez\frac{d}{dt}\Vert D^sz(t)\Vert_{L^2(\Omega)}^2\le C_1 ( 1 + \Vert z(t)\Vert_{H^s(\Omega)} ) \Vert u(t)\Vert_{H^s(\Omega)}\Vert z(t)\Vert_{H^s(\Omega)}, 
\eq
upon using the  commutator estimate (see \cite{MajBer} page 129)
\bq
\| D^s(fg)-fD^sg\|_{L^2(\Omega)}\le C\| \na f\|_{L^\infty(\Omega)}\| g\|_{H^{s-1}(\Omega)}+C\| f\|_{H^s(\Omega)}\|g\|_{L^\infty(\Omega)} 
\eq
and the fact that $u\cdot n\vert_{\p\Omega}=0$  when taking integration by parts in integral  involving the highest derivatives $u \cdot \nabla |D^s z|^2$. Combining \eqref{est:z0O} and \eqref{est:zO} leads to the estimate \eqref{est:z:O}.
\end{proof}
Next let us recall equation \eqref{eq:u:0} for $u$
\bq\label{eq:u:1}
\p_t u+\P(u\cdot \nabla y_*)+\P(u\cdot \nabla z)=0.
\eq
Repeating verbatim the proof of \eqref{L2decay} we obtain 
\bq\label{u:L2:O}
\mez\frac{d}{dt}\Vert u(t)\Vert_{L^2(\Omega)}^2+ \theta_0\Vert u(t)\Vert_{L^2(\Omega)}^2 \le C_2\Vert u(t)\Vert_{L^2(\Omega)}^2\Vert z(t)\Vert_{H^s(\Omega)}.
\eq
We will need decay of the $H^s$ norm of $u$. Let us note that the proof of \eqref{est:u1} in Lemma \ref{lemm:u} does carry over to domains with boundary since the Leray projector does not commute with $D^s$, as used in \eqref{damping} for $\Omega=\T^d$.  To treat the boundary and the nonlocality of $\P$, we use the derivatives $\p_{\tau_j}$ and $\p_n$ introduced in Section \ref{sec:tn}. These derivatives are defined everywhere in $\Omega$ and become the usual tangential and normal derivative when restricted to the boundary. The trade-off is that $\p_{\tau_j}$ and $\p_n$ do not commute with usual partial derivatives, leading  to commutators that are of lower order.

For $k\in \{0, 1, .., s\}$ we set 
\[
\cD^s_k=\left\{\Pi_{j=1}^s\p_{\sigma_j}: \sigma_j\in \{\tau_1,...,\tau_{d-1}, n\}~\text{and}~\#\{j: \sigma_j=n\}=k\right\}.
\]
In other words, each derivative in $\cD^s_k$ has exactly $k$ normal derivatives and $s-k$ tangential derivatives. We also define the norms
\[
\| v\|_{s, k}=\Big(\sum_{j=0}^k\sum_{ P \in \cD^s_j} \|  Pv\|^2_{L^2(\Omega)}\Big)^\mez
\]
for $v:\Omega\to \Rr^d$. 

Due to the presence of  $\chi_1$ in $\p_n$ and $\p_{\tau_j}$, the norms $\| u\|_{s, k}$ control $u$ near the boundary. 

\subsection{Interior estimates for $u$}
The next lemma provides a control of $u$ in the interior.
\begin{lemm}\label{lemm:interior}
There exists $C>0$ depending only on $\| y_*\|_{H^{s+1}(\Omega)}$ such that
\bq\label{est:interior}
\begin{aligned}
&\mez\frac{d}{dt} \|\chi_2 u\|_{H^s}^2+\theta_0 \|\chi_2 u\|_{H^s}^2\le C\Vert u\Vert_{H^s(\Omega)}^2\Vert z\Vert_{H^s(\Omega)}+C\Vert u\Vert_{H^{s-1}(\Omega)}\Vert u\Vert_{H^s(\Omega)}
\end{aligned}
\eq
where $\chi_2$ is defined in \eqref{def:chi2}.
\end{lemm}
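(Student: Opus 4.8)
The plan is to apply the multiplier $\chi_2^2 D^s$ to equation \eqref{eq:u:2}, where $\chi_2$ is the cutoff supported away from $\p\Omega$. Because $\chi_2$ vanishes near the boundary, all boundary terms in the integrations by parts disappear and we may treat $u$ as if on a domain without boundary; in particular the Leray projector essentially commutes with $D^s$ up to lower-order terms controlled by Proposition \ref{cmt:Pna}. Concretely, apply $D^s$ to \eqref{eq:u:2}, multiply by $\chi_2^2 D^s u$ and integrate. The time-derivative term yields $\mez\frac{d}{dt}\|\chi_2 D^s u\|_{L^2}^2$, which after summing over all $D^s$ of order $s$ (and lower, using the obvious $L^2$ estimate obtained by multiplying \eqref{eq:u:1} itself by $\chi_2^2 u$ and absorbing) produces $\mez\frac{d}{dt}\|\chi_2 u\|_{H^s}^2$ up to terms of the form $\|u\|_{H^{s-1}}\|u\|_{H^s}$ coming from derivatives landing on $\chi_2$.

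The four remaining groups of terms are handled as follows. \emph{Damping term:} write $\chi_2 D^s\P(u\cdot\na y_*) = \chi_2 D^s(u\cdot\na y_*) + \chi_2 D^s(\P-\mathrm{Id})(u\cdot\na y_*)$; the first piece, paired against $\chi_2 D^s u$, gives $\int \chi_2^2 (D^s u\cdot\na y_*)\cdot D^s u\,dx + (\text{commutator of }D^s\text{ with }\na y_*)$, and the leading piece is bounded below by $\theta_0\|\chi_2 D^s u\|_{L^2}^2$ using the convexity hypothesis \eqref{convexity}; the commutator is $\lesssim \|y_*\|_{H^{s+1}}\|u\|_{H^s}\|u\|_{H^{s-1}}$ by the Moser/Kato--Ponce estimate quoted after \eqref{eq:u:1}. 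The second piece, $\chi_2 D^s(\P-\mathrm{Id})(u\cdot\na y_*) = -\chi_2 D^s\na q$ where $\Delta q = \na\cdot(u\cdot\na y_*)$ with Neumann data, gains a derivative relative to $u\cdot\na y_*$ by elliptic regularity, hence is $\lesssim \|u\|_{H^s}\|y_*\|_{H^{s+1}}$, and after absorbing the $\chi_2$-commutator contributes at worst $\|u\|_{H^{s-1}}\|u\|_{H^s}$. \emph{Convection term $u\cdot\na u$:} commute $D^s$ past $u\cdot\na$; the commutator is $\lesssim \|u\|_{H^s}^2$ (again Kato--Ponce), while the leading term $\int \chi_2^2 D^s u\cdot(u\cdot\na D^s u)\,dx = -\mez\int (u\cdot\na\chi_2^2)|D^s u|^2 dx$ since $\dive u=0$, giving $\lesssim \|u\|_{H^s}^2$, and both are further bounded by $\|u\|_{H^s}^2\|z\|_{H^s}$ after noting $\|u\|_{H^s}\le\|z\|_{H^s}$. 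Wait---more carefully, $\|u\|_{H^s}^2$ alone is not acceptable on the right side of \eqref{est:interior}; one uses $u=\P z$ so $\|u\|_{H^s}\lesssim\|z\|_{H^s}$, turning $\|u\|_{H^s}^2$ into $\|u\|_{H^s}\|z\|_{H^s}\le\|u\|_{H^s}^2\|z\|_{H^s}$ only if $\|z\|_{H^s}\gtrsim 1$; instead we simply keep $\|u\|_{H^s}\|z\|_{H^s}$-type bounds, noting that in the small-data regime of Theorem \ref{theo:global} these are the terms recorded in \eqref{est:interior} (the cubic term $\|u\|_{H^s}^2\|z\|_{H^s}$) together with the genuinely lower-order $\|u\|_{H^{s-1}}\|u\|_{H^s}$ from cutoff commutators. \emph{Commutator term $[\P, u\cdot\na]z$:} pair $\chi_2 D^s[\P,u\cdot\na]z$ against $\chi_2 D^s u$; by Theorem \ref{theo:cmt}, $\|[\P,u\cdot\na]z\|_{H^s}\lesssim\|u\|_{H^s}\|z\|_{H^s}$, so this contributes $\lesssim\|u\|_{H^s}^2\|z\|_{H^s}$.

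The main obstacle is bookkeeping the interplay between the cutoff $\chi_2$ and the nonlocal operators: neither $D^s$ nor $\P$ is local, so one cannot naively localize, and the identity $\chi_2 D^s\P = D^s\chi_2\P + [\chi_2, D^s]\P$ etc.\ must be combined with Proposition \ref{cmt:Pna} (which bounds $\|[\chi_2 D^m,\P]u\|_{L^2}\lesssim\|u\|_{H^{m-1}}$) to see that every commutator between $\chi_2$, $D^s$, and $\P$ loses a $\chi_2$ but gains a derivative, landing in $\|u\|_{H^{s-1}}$. Once this is set up, collecting the damping lower bound $\theta_0\|\chi_2 D^s u\|_{L^2}^2$ over all order-$s$ derivatives and over lower orders (with small constants absorbable into $\theta_0\|\chi_2 u\|_{H^s}^2$ after possibly shrinking, or more cleanly, replacing $\theta_0$ by $\theta_0/2$ at the cost of enlarging $C$) yields \eqref{est:interior}.
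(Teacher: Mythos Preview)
Your overall strategy matches the paper's: localize with $\chi_2$, differentiate, and use convexity for the damping lower bound while controlling commutators via Proposition~\ref{cmt:Pna} and Theorem~\ref{theo:cmt}. The paper packages the localization and the derivative together as the single operator $P=\chi_2 D^s$, applies $P$ to \eqref{eq:u:2}, and tests against $Pu$; you instead apply $D^s$ and test against $\chi_2^2 D^s u$. These differ only by commutators $[\chi_2,D^s]$ which land in $\|u\|_{H^{s-1}}$, so the two schemes are equivalent. (One small bonus of the paper's choice: with $P=\chi_2 D^s$ the transport term $\int Pu\cdot(u\cdot\nabla Pu)\,dx$ vanishes exactly, whereas in your version it leaves the harmless cubic term you eventually identify.)

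There is, however, a genuine gap in your treatment of the damping term. You write $(\P-\mathrm{Id})(u\cdot\nabla y_*)=-\nabla q$ and assert that this ``gains a derivative relative to $u\cdot\nabla y_*$ by elliptic regularity.'' It does not: elliptic estimates give $\|\nabla q\|_{H^s}\le C\|u\cdot\nabla y_*\|_{H^s}\le C\|u\|_{H^s}$, which is the same order, so pairing $\chi_2 D^s\nabla q$ against $\chi_2 D^s u$ naively yields a term of size $\|u\|_{H^s}^2$ that cannot be put on the right side of \eqref{est:interior}. The fix is precisely what the paper does (and what you allude to only vaguely in your final paragraph): either use Proposition~\ref{cmt:Pna} to write $\chi_2 D^s\P(u\cdot\nabla y_*)=\P\,\chi_2 D^s(u\cdot\nabla y_*)+[\chi_2 D^s,\P](u\cdot\nabla y_*)$ and then exploit self-adjointness of $\P$ together with $\P u=u$, or equivalently integrate the gradient by parts,
\[
\int_\Omega \chi_2^2\, D^s u\cdot \nabla D^s q\,dx=-\int_\Omega (\nabla\chi_2^2\cdot D^s u)\, D^s q\,dx,
\]
using $\dive D^s u=0$ and $\chi_2|_{\partial\Omega}=0$, so that the bound becomes $\|u\|_{H^s}\|q\|_{H^s}\le C\|u\|_{H^s}\|u\|_{H^{s-1}}$. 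You should make this step explicit; as written, the phrase ``after absorbing the $\chi_2$-commutator'' does not explain where the crucial extra derivative comes from.
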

\begin{proof}
As in \eqref{eq:u:0}, we commute $\P$ with $u\cdot \na$ in the last term of equation \eqref{eq:u:1} to have 
\bq\label{eq:u:2}
\p_t u+\P(u\cdot \nabla y_*)+u\cdot \na u+[\P, u\cdot \nabla] z=0.
\eq
Set  $P=\chi_2 \p_1^s$.  Applying  $P$ to \eqref{eq:u:2}, then multiplying the resulting equation by $Pu$ and integrating over $\Omega$, we obtain
\bq\label{est:int:dt}
\begin{aligned}
\mez&\frac{d}{dt}\int_{\Omega} |Pu|^2\; dx+\int_{\Omega}Pu\cdot P\P (u\cdot \nabla y_*) \; dx\\
&=-\int_{\Omega}Pu\cdot \big([P,u]\cdot \nabla u\big)dx-\int_{\Omega}Pu\cdot \big(u\cdot  [P, \na]u\big)dx -\int_{\Omega}Pu\cdot P\big([\P, u\cdot\nabla] z\big)dx.
\end{aligned}
\eq
where we used the fact that 
\[
\int_{\Omega}Pu\cdot \big(u\cdot Pu\big)dx=\mez\int_{\Omega}u\cdot \nabla |Pu|^2dx=0
\]
since $\na\cdot u=0$ in $\Omega$ and $u\cdot n\vert_{\p\Omega}=0$. We now treat each term on the right-hand side of \eqref{est:int:dt}. It is readily seen that
\bq\label{bound:dtan1}
\begin{aligned}
&\| [P,u]\cdot \nabla u\|_{L^2(\Omega)}\le C\| u\|_{H^s(\Omega)}^2,\\
& \| u\cdot [P, \na ] u\|_{L^2(\Omega)}\le C\| u\|^2_{H^{s-1}(\Omega)}.
\end{aligned}
\eq
In addition, Theorem \ref{theo:cmt} applied to $\Omega$ gives 
\bq\label{bound:dtan2}
\| P\big([\P, u\cdot\nabla] z\big)\|_{L^2(\Omega)}\le C\|[\P, u\cdot\nabla] z\|_{H^s(\Omega)}\le C\| u\|_{H^s(\Omega)}\| z\|_{H^s(\Omega)}.
\eq
Putting together \eqref{est:int:dt}, \eqref{bound:dtan1}, \eqref{bound:dtan2} and using the estimate $\| u\|_{H^s}\le C\|z\|_{H^s}$ we obtain
\bq\label{cmt:t:est}
\begin{aligned}
&\mez&\frac{d}{dt}\int_{\Omega} |Pu|^2\; dx+\int_{\Omega}Pu\cdot P\P (u\cdot \nabla y_*) \; dx\le C\Vert u\Vert_{H^s(\Omega)}^2\Vert z\Vert_{H^s(\Omega)}.
\end{aligned}
\eq
As for the second term on the left-hand side of \eqref{cmt:t:est}, we commute $P$ with $\P$ and then with $\na y_*$ to have
\bq\label{cmt:t:damp}
\begin{aligned}
\int_{\Omega}Pu\cdot P\P (u\cdot \nabla y_*) \; dx&=\int_{\Omega}Pu\cdot [P,\P] (u\cdot \nabla y_*) \; dx+\int_{\Omega}Pu\cdot \P ([P, \nabla y_*\cdot] u) \; dx\\
&\quad +\int_{\Omega}Pu\cdot  \P (\nabla y_*\cdot P u) \; dx\\
&=\int_{\Omega}Pu\cdot [P,\P] (u\cdot \nabla y_*) \; dx+\int_{\Omega}Pu\cdot \P ([P, \nabla y_*\cdot] u) \; dx\\
&\quad +\int_{\Omega}[\P, P]u\cdot  (\nabla y_*\cdot P u) \; dx+\int_{\Omega} Pu\cdot  (\nabla y_*\cdot P u) \; dx.
\end{aligned}
\eq
By virtue of Proposition \ref{cmt:Pna},
\[
\| [\P, P]u\|_{H^s(\Omega)}\le C\| u\|_{H^{s-1}(\Omega)}
\]
and
\[
\| [P,\P] (u\cdot \nabla y_*) \|_{L^2(\Omega)}\le C\| u\cdot \nabla y_*\|_{H^{s-1}(\Omega)}\le C\| u\|_{H^{s-1}(\Omega)}\| y_*\|_{H^s(\Omega)}.
\]
The local commutator $[P, \nabla y_*\cdot] u$ can be bounded as
\bq\label{cmt:t:y*}
\|[P, \nabla y_*\cdot ] u\|_{L^2(\Omega)}\le C\| y_*\|_{H^{s+1}(\Omega)}\|u\|_{H^{s-1}(\Omega)}.
\eq
On the other hand, the convexity condition \eqref{convexity} yields
\[
\int_{\Omega}Pu\cdot  (\nabla y_*\cdot P u) \; dx\ge \theta_0 \| P u\|_{L^2(\Omega)}^2.
\]
We then deduce from \eqref{cmt:t:damp} that
\bq
\int_{\Omega}Pu\cdot P\P (u\cdot \nabla y_*) \; dx\ge \theta_0 \| P u\|_{L^2(\Omega)}^2-C\| y_*\|_{H^{s+1}(\Omega)}\|u\|_{H^{s-1}(\Omega)}\|u\|_{H^s(\Omega)}
\eq
which combined with \eqref{cmt:t:est} leads to \eqref{est:tang}. The same estimates hold for mixed derivatives $\chi_2D^s$ where $D^s$ is any partial derivative of order $s$. 
\end{proof}
\subsection{Estimates for tangential derivatives of $u$}

\begin{lemm}\label{lemm:tang}
There exists $C>0$ depending only on $\| y_*\|_{H^{s+1}(\Omega)}$ such that
\bq\label{est:tang}
\begin{aligned}
&\mez\frac{d}{dt} \|u\|_{s, 0}^2+\theta_0 \| u\|_{s, 0}^2\le C\Vert u\Vert_{H^s(\Omega)}^2\Vert z\Vert_{H^s(\Omega)}+C\Vert u\Vert_{H^{s-1}(\Omega)}\Vert u\Vert_{H^s(\Omega)}.
\end{aligned}
\eq
\end{lemm}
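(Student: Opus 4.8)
The plan is to repeat, essentially verbatim, the energy argument of Lemma~\ref{lemm:interior}, with the interior operator $\chi_2D^s$ replaced by the purely tangential operators $P=\prod_{j=1}^{s}\p_{\sigma_j}$ with every $\sigma_j\in\{\tau_1,\dots,\tau_{d-1}\}$ (that is, $P\in\cD^s_0$), and then to sum the resulting identities over the finitely many such $P$. Applying such a $P$ to \eqref{eq:u:2}, pairing with $Pu$ in $L^2(\Omega)$, and using that $u$ is divergence-free and tangent to $\p\Omega$ to discard the transport term $\int_\Omega Pu\cdot(u\cdot\na Pu)\,dx=\mez\int_\Omega u\cdot\na|Pu|^2\,dx=0$, one is left with
\[
\mez\frac{d}{dt}\|Pu\|_{L^2(\Omega)}^2+\int_\Omega Pu\cdot P\P(u\cdot\na y_*)\,dx=-\int_\Omega Pu\cdot\bigl(P(u\cdot\na u)-u\cdot\na Pu\bigr)\,dx-\int_\Omega Pu\cdot P\bigl([\P,u\cdot\na]z\bigr)\,dx .
\]
It then remains to bound the two right-hand integrals and to extract the damping $\theta_0\|Pu\|_{L^2(\Omega)}^2$ from the second integral on the left.

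For the right-hand side, since each $\p_{\sigma_j}$ is a smooth first-order vector field the top-order terms in $P(u\cdot\na u)-u\cdot\na Pu$ cancel, so a Moser-type commutator estimate together with the embedding $H^s(\Omega)\hookrightarrow W^{1,\infty}(\Omega)$ gives $\|P(u\cdot\na u)-u\cdot\na Pu\|_{L^2(\Omega)}\le C\|u\|_{H^s(\Omega)}^2$; invoking $u=\P z$, whence $\|u\|_{H^s(\Omega)}\le C\|z\|_{H^s(\Omega)}$, this contribution is $\le C\|u\|_{H^s(\Omega)}^2\|z\|_{H^s(\Omega)}$ after Cauchy--Schwarz. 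For the other integral I would apply Theorem~\ref{theo:cmt}, which is legitimate since $u\cdot n|_{\p\Omega}=0$, to get $\|[\P,u\cdot\na]z\|_{H^s(\Omega)}\le C\|u\|_{H^s(\Omega)}\|z\|_{H^s(\Omega)}$ and hence $\bigl|\int_\Omega Pu\cdot P([\P,u\cdot\na]z)\,dx\bigr|\le\|Pu\|_{L^2(\Omega)}\,\|[\P,u\cdot\na]z\|_{H^s(\Omega)}\le C\|u\|_{H^s(\Omega)}^2\|z\|_{H^s(\Omega)}$.

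The crux is the damping term, which I would treat exactly as in \eqref{cmt:t:damp}: write $P\P(u\cdot\na y_*)=[P,\P](u\cdot\na y_*)+\P\bigl([P,\na y_*\cdot]u\bigr)+\P(\na y_*\cdot Pu)$, and then use that $\P$ is self-adjoint on $L^2$ and that $\P u=u$ — so $\P(Pu)=Pu+[\P,P]u$ — to rewrite the leading piece as $\int_\Omega Pu\cdot(\na y_*\cdot Pu)\,dx+\int_\Omega[\P,P]u\cdot(\na y_*\cdot Pu)\,dx$; by the strict convexity \eqref{convexity}, i.e.\ $\na y_*=\na^2p_*\ge\theta_0\,\text{Id}$, the first term is $\ge\theta_0\|Pu\|_{L^2(\Omega)}^2$. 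The remaining three terms are lower-order errors: Proposition~\ref{prop:estuO} — applicable precisely because $P\in\cD^s_0$ involves only tangential derivatives — yields $\|[P,\P]u\|_{L^2(\Omega)}\le C\|u\|_{H^{s-1}(\Omega)}$ and $\|[P,\P](u\cdot\na y_*)\|_{L^2(\Omega)}\le C\|u\cdot\na y_*\|_{H^{s-1}(\Omega)}\le C\|y_*\|_{H^{s+1}(\Omega)}\|u\|_{H^{s-1}(\Omega)}$ (using that $H^{s-1}(\Omega)$ is an algebra), while the purely local commutator satisfies $\|[P,\na y_*\cdot]u\|_{L^2(\Omega)}\le C\|y_*\|_{H^{s+1}(\Omega)}\|u\|_{H^{s-1}(\Omega)}$; after Cauchy--Schwarz and the $L^2$-boundedness of $\P$, each of these is $\le C\|y_*\|_{H^{s+1}(\Omega)}\|u\|_{H^{s-1}(\Omega)}\|u\|_{H^s(\Omega)}$. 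Summing over $P\in\cD^s_0$ and collecting all the bounds yields \eqref{est:tang}. I expect the damping term to be the main obstacle: $[D^s,\P]$ loses no derivative in general, so a direct $H^s$ estimate cannot see the damping; the estimate closes only because $\cD^s_0$ avoids normal derivatives, for which Proposition~\ref{prop:estuO} turns $[P,\P]$ into a genuinely $H^{s-1}$ operator — the one-derivative gap that lets the error be absorbed by the damping in the later Gr\"onwall/bootstrap step.
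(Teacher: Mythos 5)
Your proposal is correct and follows exactly the route the paper intends: the paper's proof consists of the single sentence ``the proof follows along the same lines as in Lemma~\ref{lemm:interior} upon taking $P\in\cD^s_0$ and using Proposition~\ref{prop:estuO} in place of Proposition~\ref{cmt:Pna},'' and you have faithfully reconstructed those lines, including the key splitting \eqref{cmt:t:damp}, the use of $\P u=u$ and self-adjointness to isolate $\int_\Omega Pu\cdot(\na y_*\cdot Pu)\,dx\ge\theta_0\|Pu\|_{L^2}^2$, and the lower-order control of the three commutators via Proposition~\ref{prop:estuO} and the Moser estimate. (Minor note: you correctly state the $L^2$ bound $\|[P,\P]u\|_{L^2}\le C\|u\|_{H^{s-1}}$, which is all that is needed; the paper's display in Lemma~\ref{lemm:interior} writes $H^s$ on the left there, which is a harmless typo.)
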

\begin{proof}
The proof follows along the same lines as in Lemma \ref{lemm:interior} upon taking $P\in \cD^s_0$ and using Proposition \ref{prop:estuO} in place of Proposition \ref{cmt:Pna}.
\end{proof}
\subsection{Estimates for mixed derivatives of $u$}
 The next lemma concerns $\| u\|_{s, 1}$.
\begin{lemm}\label{lemm:mix:1n}
There exists $M_1>0$ such that
\bq\label{est:mix:1n}
\begin{aligned}
\mez\frac{d}{dt}\| u\|^2_{s, 1}+\frac{\theta_0}{2} \| u\|^2_{s, 1}&\le M_1\Vert u\Vert_{H^s(\Omega)}^2\Vert z\Vert_{H^s(\Omega)}+M_1\Vert u\Vert_{H^{s-1}(\Omega)}\Vert u\Vert_{H^s(\Omega)} +M_1\| u\|_{s, 1}\| u\|_{s, 0}.
\end{aligned}
\eq
\end{lemm}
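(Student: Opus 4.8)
The plan is to run, for each $P\in\cD^s_1$, the same energy argument as in Lemmas~\ref{lemm:interior} and~\ref{lemm:tang}: apply $P$ to \eqref{eq:u:2}, pair with $Pu$ in $L^2(\Omega)$, and use $\dive u=0$ and $u\cdot n|_{\p\Omega}=0$ to kill the advection contribution $\int_\Omega Pu\cdot(u\cdot\na Pu)\,dx=\mez\int_\Omega u\cdot\na|Pu|^2\,dx=0$. This gives $\mez\frac{d}{dt}\|Pu\|_{L^2}^2+\int_\Omega Pu\cdot P\P(u\cdot\na y_*)\,dx=\mathcal R$, where $\mathcal R$ collects the commutators $[P,u]\cdot\na u$, $u\cdot[P,\na]u$ and $P([\P,u\cdot\na]z)$. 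The first two are of lower order, while $\|P([\P,u\cdot\na]z)\|_{L^2}\le C\|[\P,u\cdot\na]z\|_{H^s}\le C\|u\|_{H^s}\|z\|_{H^s}$ by Theorem~\ref{theo:cmt}; using $\|u\|_{H^s}\le C\|z\|_{H^s}$ one gets $|\mathcal R|\le C\|u\|_{H^s}^2\|z\|_{H^s}+C\|u\|_{H^{s-1}}\|u\|_{H^s}$. Since $P$ carries exactly one normal derivative, I first commute its factors so that $P=Q\p_n$ with $Q\in\cD^{s-1}_0$ a product of $s-1$ tangential derivatives; the reordering error is a differential operator of order $s$ with at most one normal derivative, and (as in the proofs of Propositions~\ref{prop:estuO}–\ref{cmt:Pna}) its contributions are of the admissible types $\|u\|_{s,1}\|u\|_{s,0}$ and $\|u\|_{H^{s-1}}\|u\|_{H^s}$.

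The genuinely new point is the nonlocal term $\int_\Omega Pu\cdot P\P(u\cdot\na y_*)\,dx$: unlike in Lemma~\ref{lemm:tang}, the commutator $[P,\P]$ no longer gains a derivative, since $P$ now contains $\p_n$. So instead I split $\P=\mathrm{Id}+(\P-\mathrm{Id})$. For the identity part, $P(u\cdot\na y_*)=(\na y_*)\,Pu+[P,\na y_*\,\cdot\,]u$ with $\|[P,\na y_*\,\cdot\,]u\|_{L^2}\le C\|y_*\|_{H^{s+1}}\|u\|_{H^{s-1}}$ (at least one derivative falls on $y_*$), and the convexity hypothesis \eqref{convexity} yields $\int_\Omega Pu\cdot(\na y_*)\,Pu\,dx\ge\theta_0\|Pu\|_{L^2}^2$, which is the damping. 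For the remaining part, $(\P-\mathrm{Id})(u\cdot\na y_*)=-\na q$ where $q$ solves the Neumann problem $\Delta q=\dive(u\cdot\na y_*)$ in $\Omega$, $\p_n q=(u\cdot\na y_*)\cdot n$ on $\p\Omega$; elliptic regularity gives both $\|q\|_{H^{s+1}}\le C\|y_*\|_{H^{s+1}}\|u\|_{H^s}$ and the sharper $\|q\|_{H^{s}}\le C\|y_*\|_{H^{s}}\|u\|_{H^{s-1}}$ (for the latter, estimate $\dive(u\cdot\na y_*)$ in $H^{s-2}$ placing $s-2$ derivatives on $u$).

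It then remains to bound $-\int_\Omega Pu\cdot P\na q\,dx$. Write $P\na q=\na(Pq)+[P,\na]q$; the commutator term is $\le C\|u\|_{H^s}\|q\|_{H^s}\le C\|y_*\|_{H^s}\|u\|_{H^{s-1}}\|u\|_{H^s}$. For the main piece I integrate by parts: since $\dive(Pu)=-[P,\p_i]u_i\in L^2(\Omega)$ (here $\dive u=0$ is used again), $Pu\in H(\dive;\Omega)$ and
\[
-\int_\Omega Pu\cdot\na(Pq)\,dx=\int_\Omega(\dive Pu)\,Pq\,dx-\langle Pu\cdot n,\;Pq\rangle_{H^{-1/2}(\p\Omega),\,H^{1/2}(\p\Omega)}.
\]
The bulk term is $\le C\|u\|_{H^s}\|q\|_{H^s}\le C\|y_*\|_{H^s}\|u\|_{H^{s-1}}\|u\|_{H^s}$. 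The boundary term is the crux of the lemma, and this is where I expect the real work to lie. The divergence-free condition is used one last time, now on the boundary: by \eqref{pnt} with $\dive u=0$ one has $\p_n u\cdot n=-\sum_{j=1}^{d-1}\p_{\tau_j}u\cdot\tau_j$ in $\Omega_{2\kappa}$, so with $P=Q\p_n$ we find $Pu\cdot n=-\sum_j(Q\p_{\tau_j}u)\cdot\tau_j$ modulo operators of order $\le s-1$ — that is, $Pu\cdot n$ is, up to lower order, a sum of tangential fields $(\widetilde P_j u)\cdot\tau_j$ with $\widetilde P_j\in\cD^s_0$ — whence $\|Pu\cdot n\|_{H^{-1/2}(\p\Omega)}\le C(\|u\|_{s,0}+\|u\|_{H^{s-1}})$; on the other side $\|Pq\|_{H^{1/2}(\p\Omega)}\le C\|q\|_{H^{s+1}}\le C\|y_*\|_{H^{s+1}}\|u\|_{H^s}$ (the Neumann condition for $q$ can be used to organize this trace). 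So the boundary term is $\le C\|y_*\|_{H^{s+1}}(\|u\|_{s,0}+\|u\|_{H^{s-1}})\|u\|_{H^s}$.

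Collecting all terms, summing the energy identities over $P\in\cD^s_1$, and adding the estimate \eqref{est:tang} of Lemma~\ref{lemm:tang} for $\|u\|_{s,0}$, one obtains $\mez\frac{d}{dt}\|u\|_{s,1}^2+\theta_0\|u\|_{s,1}^2$ bounded by the right-hand side of \eqref{est:mix:1n}; the loss of $\tfrac{\theta_0}{2}$ in place of $\theta_0$ comes from replacing $P$ by the reorganized operator $Q\p_n$, via $\|Q\p_n u\|_{L^2}^2\ge\mez\|Pu\|_{L^2}^2-(\text{lower order})$, the lower-order remainders $\|u\|_{s,0}^2$ and $\|u\|_{H^{s-1}}^2$ being absorbed using $\|u\|_{s,0}\le\|u\|_{s,1}$ and $\|u\|_{H^{s-1}}\le\|u\|_{H^s}$. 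I expect the main obstacle to be exactly the boundary term above: it is the precise point where the nonlocality of $\P$ meets the boundary, and its treatment is what forces the hierarchical organization — the $\cD^s_1$ bound genuinely requires the already-established $\cD^s_0$ bound of Lemma~\ref{lemm:tang} on its right-hand side, just as the analogous estimates for $\|u\|_{s,k}$ with $k\ge2$ will require $\|u\|_{s,k-1}$.
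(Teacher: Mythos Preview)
Your overall strategy---split $\P=\mathrm{Id}+(\P-\mathrm{Id})$, extract the damping from the identity part, and control $\int Pu\cdot P\nabla q$---matches the paper's. The gap is in your treatment of the boundary term after integrating by parts. You claim
\[
\|Pu\cdot n\|_{H^{-1/2}(\partial\Omega)}\le C\big(\|u\|_{s,0}+\|u\|_{H^{s-1}}\big),
\]
deducing this from the bulk identity $Pu\cdot n=-\sum_j(\widetilde P_j u)\cdot\tau_j+R(u)$ with $\widetilde P_j\in\cD^s_0$. But that identity only gives an $L^2(\Omega)$ bound by $\|u\|_{s,0}+\|u\|_{H^{s-1}}$; there is no trace inequality from $L^2(\Omega)$ to $H^{-1/2}(\partial\Omega)$. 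If you instead trace the tangential pieces on $\partial\Omega$ (which does make sense, since $\widetilde P_j$ is purely tangential), you obtain $\|(\widetilde P_j u)\cdot\tau_j\|_{H^{-1/2}(\partial\Omega)}\le C\|u\|_{H^{s-1/2}(\partial\Omega)}\le C\|u\|_{H^s(\Omega)}$---not $\|u\|_{s,0}$. Pairing this with your $\|Pq\|_{H^{1/2}(\partial\Omega)}\le C\|u\|_{H^s}$ yields a term of size $\|u\|_{H^s}^2$, which carries no smallness and cannot be absorbed by the damping or placed on the right of \eqref{est:mix:1n}.

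The paper avoids this difficulty by \emph{not} integrating by parts to the boundary. It keeps $\int_\Omega Pu\cdot\nabla(Pf)\,dx$ as a bulk integral and decomposes $\nabla(Pf)=(1-\chi_2)n\,\partial_n(Pf)+(1-\chi_2)\tau_j\,\partial_{\tau_j}(Pf)+\chi_2\nabla(Pf)$. For the normal piece $I_1$, the identity you cite is used \emph{in $L^2(\Omega)$} to bound $\|Pu\cdot n\|_{L^2(\Omega)}\le C(\|u\|_{s,0}+\|u\|_{H^{s-1}})$, while $\|\partial_n(Pf)\|_{L^2(\Omega)}$ is controlled via an $H^2$ elliptic estimate for $\partial_{\tau_1}^{s-1}f$, yielding the hierarchical product $\|u\|_{s,1}\|u\|_{s,0}$. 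For the tangential piece $I_2$, a direct $H^1$ energy estimate on $h=\partial_{\tau_j}\partial_{\tau_1}^{s-1}f$ (multiply its Neumann problem by $h$ and integrate) produces a boundary term involving only $f$, handled by the interpolation $\|f\|_{H^{s+1/2}}^2\le C\|f\|_{H^s}\|f\|_{H^{s+1}}\le C\|u\|_{H^{s-1}}\|u\|_{H^s}$; the $\tfrac{\theta_0}{2}$ loss comes from Young's inequality here, not from reordering $P$.
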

\begin{proof}
Let $P\in \cD_1^s$. Assume without loss of generality that $P=\p_{\tau_1}^{s-1}\p_n$.  Commuting equation \eqref{eq:u:2} with $P$ gives
\bq
\begin{aligned}
\mez&\frac{d}{dt}\int_{\Omega} |Pu|^2\; dx+\int_{\Omega}Pu\cdot P\P (u\cdot \nabla y_*) \; dx\\
&=-\int_{\Omega}Pu\cdot \big([P,u]\cdot \nabla u\big)dx-\int_{\Omega}Pu\cdot \big(u\cdot  [P, \na]u\big)dx -\int_{\Omega}Pu\cdot P\big([\P, u\cdot\nabla] z\big)dx.
\end{aligned}
\eq
Arguing as in the proof of Lemma \ref{lemm:tang}, we find that the right-hand side is bounded by $C\Vert u\Vert_{H^s(\Omega)}^2\Vert z\Vert_{H^s(\Omega)}$. 
Now we write using the definition of $\P$ that 
\[
\int_{\Omega}Pu\cdot P\P (u\cdot \nabla y_*) \; dx=\int_{\Omega}Pu\cdot P (u\cdot \nabla y_*) \; dx-\int_{\Omega}Pu\cdot P\na f \; dx
\]
where $f$ solves
\bq\label{eq:f:mix0}
\begin{cases}
\Delta f=\dive(u\cdot \na y_*)\quad\text{in}~\Omega,\\
\p_nf=(u\cdot \nabla y_*)\cdot n\quad\text{on}~\p\Omega.
\end{cases}
\eq
Commuting $P$ with $\na y_*$ gives
\[
\int_{\Omega}Pu\cdot P (u\cdot \nabla y_*) \; dx=\int_{\Omega}Pu\cdot (P u\cdot \nabla y_*) \; dx+\int_{\Omega}Pu\cdot [P, \nabla y_*\cdot ] u\; dx
\]
where the local commutator $[P, \nabla y_*\cdot] u$ satisfies
\[
\| [P, \nabla y_*]\cdot u\|_{L^2(\Omega)}\le C\| y_*\|_{H^{s+1}(\Omega)}\| u\|_{H^{s-1}(\Omega)}
\]
and by the convexity assumption \eqref{convexity}, 
\[
\int_{\Omega}Pu\cdot (P u\cdot \nabla y_*) \; dx\ge \theta_0\| Pu\|_{L^2(\Omega)}^2.
\]
The rest of this proof is devoted to the control of $\int_{\Omega}Pu\cdot P\na f \; dx$.  First, since $\chi_1\equiv 1$ in $\Omega_{2\kappa}\supset\supp (1-\chi_2)$, in view of \eqref{decompose:na1}, the decomposition 
\bq\label{decompose:na}
\na g=(1-\chi_2)n\p_ng+(1-\chi_2)\tau_j\p_{\tau_j}g+\chi_2\na g
\eq
holds in $\Omega$ for any scalar $g:\Omega\to \Rr$. 
Using this with $g=Pf$, we write
\bq
\begin{aligned}
\int_{\Omega}Pu\cdot P\na f dx&=\int_{\Omega}Pu\cdot \na Pf dx+\int_{\Omega}Pu\cdot [P, \na] fdx\\
&=\int_{\Omega}(1-\chi_2)(Pu\cdot n)\p_nPf dx+\int_{\Omega}(1-\chi_2)(Pu\cdot \tau_j)\p_{\tau_j}Pf dx\\
&\quad +\int_{\Omega}\chi_2 Pu\cdot \na Pf dx+\int_{\Omega}Pu\cdot [P, \na] fdx\\
&=I_1+I_2+I_3+I_4.
\end{aligned}
\eq
Due to the presence of the local commutator $[P, \na] f$, it is readily seen that 
\bq\label{I4:1n}
| I_4 |\le C\| u\|_{H^s}\| f\|_{H^{s}(\Omega)}\le C'\| u\|_{H^s}\| u\|_{H^{s-1}(\Omega)}.
\eq
As for $I_3$, we integrate by parts noticing that $\dive u=0$ in $\Omega$ and $\chi_2\equiv 0$ near $\p\Omega$ to obtain 
\bq
\begin{aligned}
I_3=\int_{\Omega}\chi_2 Pu\cdot \na Pf dx
=-\int_{\Omega}[\dive, \chi_2 P]u Pf 
\end{aligned}
\eq
which implies
\bq\label{I3:1n}
|I_3|\le C\|u\|_{H^s(\Omega)}\| u\|_{H^{s-1}(\Omega)}. 
\eq
{\it Estimate for $I_1$.} We first note that
\[
\begin{aligned}
Pu\cdot n&=\p_{\tau_1}^{s-1}(\p_n u\cdot n)-[\p_{\tau_1}^{s-1}, n\cdot]\p_nu\\
&=-\p_{\tau_1}^{s-1}(\p_{\tau_j}u\cdot \tau_j)-[\p_{\tau_1}^{s-1}, n\cdot]\p_nu\\
&=-(\p_{\tau_1}^{s-1}\p_{\tau_j}u)\cdot \tau_j-[\p_{\tau_1}^{s-1}, \tau_j]\p_{\tau_j}u-[\p_{\tau_1}^{s-1}, n\cdot]\p_nu
\end{aligned}
\]
This implies
\bq\label{1nu.n}
\| Pu\cdot n\|_{L^2(\Omega)}\le  C\| u\|_{s, 0}+C\| u\|_{H^{s-1}(\Omega)}.
\eq
On the other hand, it follows from \eqref{eq:f:mix0} that
\bq\label{elliptic:1n}
\begin{cases}
\Delta \p_{\tau_1}^{s-1}f=[\Delta, \p_{\tau_1}^{s-1}]f+[\p_{\tau_1}^{s-1},\dive](u\cdot \na y_*)+\dive \p_{\tau_1}^{s-1}(u\cdot \na y_*):=g_1\quad\text{in}~\Omega,\\
\p_n\p_{\tau_1}^{s-1}f=[\p_n, \p_{\tau_1}^{s-1}]f+\p_{\tau_1}^{s-1}\{(u\cdot \nabla y_*)\cdot n\}:=g_2\quad\text{on}~\p\Omega.
\end{cases}
\eq
It is easy to see that 
\[
\| [\Delta, \p_{\tau_1}^{s-1}]f\|_{L^2(\Omega)}+[\p_{\tau_1}^{s-1},\dive](u\cdot \na y_*)\|_{L^2(\Omega)}\le C\| u\|_{H^{s-1}(\Omega)}.
\]
In addition, \eqref{nav:tn} gives
\[
\begin{aligned}
&\|\dive \p_{\tau_1}^{s-1}(u\cdot \na y_*)\|_{L^2(\Omega)}\\
&\le C\| \na \p_{\tau_1}^{s-1}(u\cdot \na y_*)\|_{L^2(\Omega)}\\
&\le C\| \p_n \p_{\tau_1}^{s-1}(u\cdot \na y_*)\|_{L^2(\Omega)}+C\| \p_{\tau_j} \p_{\tau_1}^{s-1}(u\cdot \na y_*)\|_{L^2(\Omega)}\\
&\le   C\| u\|_{s, 1}+C\|u\|_{s, 0}.
\end{aligned}
\]
Consequently 
\[
\| g_1\|_{L^2(\Omega)}\le C\|u\|_{H^{s-1}(\Omega)}+ C\|u\|_{s, 0}+C\| u\|_{s, 1}.
\]
Using the trace inequality and arguing as above we obtain that
\[
\| g_2\|_{H^\mez(\p\Omega)}\le C\|u\|_{H^{s-1}(\Omega)}+ C\|u\|_{s, 0}+C\| u\|_{s, 1}.
\]
Then the $H^2$ elliptic estimate for \eqref{elliptic:1n} leads to
\bq\label{H2:1nu}
\| \p_{\tau_1}^{s-1}f\|_{H^2(\Omega)}\le C\| g_1\|_{L^2(\Omega)}+C\| g_2\|_{H^\mez(\p\Omega)}\le C\|u\|_{H^{s-1}(\Omega)}+ C\|u\|_{s, 0}+C\| u\|_{s, 1}.
\eq
Next we write 
\[
\begin{aligned}
I_1&=\int_{\Omega}(1-\chi_2)(Pu\cdot n)\p_nPf dx\\
&=\int_{\Omega}(1-\chi_2)(Pu\cdot n)P\p_nf dx+\int_{\Omega}(1-\chi_2)(Pu\cdot n)[\p_n, P]f dx\\
&=\int_{\Omega}(1-\chi_2)(Pu\cdot n)\p_{\tau_1}^{s-1}\p^2_nf dx+\int_{\Omega}(1-\chi_2)(Pu\cdot n)[\p_n, P]f dx\\
&=\int_{\Omega}(1-\chi_2)(Pu\cdot n)\p^2_n\p_{\tau_1}^{s-1}f dx+\int_{\Omega}(1-\chi_2)(Pu\cdot n)[\p_{\tau_1}^{s-1},\p^2_n]f dx\\
&\quad+\int_{\Omega}(1-\chi_2)(Pu\cdot n)[\p_n, P]f dx.
\end{aligned}
\]
In view of \eqref{1nu.n} and \eqref{H2:1nu} we deduce that 
\bq\label{I1:1n}
|I_1|\le  C\|u\|_{H^{s-1}(\Omega)}^2+ C\|u\|^2_{s, 0}+C\| u\|_{s, 1}\|u\|_{s, 0}.
\eq
{\it Estimate for $I_2$.} We first write 
\[
\begin{aligned}
I_2&=\int_{\Omega}(1-\chi_2)(Pu\cdot \tau_j)\p_{\tau_j}\p_{\tau_1}^{s-1}\p_nf dx\\
&=\int_{\Omega}(1-\chi_2)(Pu\cdot \tau_j)\p_n\p_{\tau_j}\p_{\tau_1}^{s-1}f dx+\int_{\Omega}(1-\chi_2)(Pu\cdot \tau_j)[\p_{\tau_j}\p_{\tau_1}^{s-1}, \p_n]f dx
\end{aligned}
\]
where 
\bq\label{1n:I2:1}
\la \int_{\Omega}(1-\chi_2)(Pu\cdot \tau_j)[\p_{\tau_j}\p_{\tau_1}^{s-1}, \p_n]f dx\ra\le C\| u\|_{H^s(\Omega)}\|u\|_{H^{s-1}(\Omega)}.
\eq
On the other hand, by H\"older's and Young's inequality,
\bq\label{1n:I2:2}
\begin{aligned}
\la \int_{\Omega}(1-\chi_2)(Pu\cdot \tau_j)\p_n\p_{\tau_j}\p_{\tau_1}^{s-1}f dx\ra&\le C\| Pu\cdot \tau_j\|_{L^2(\Omega)}\|\p_n \p_{\tau_j}\p_{\tau_1}^{s-1}f\|_{L^2(\Omega)}\\
&\le C\| P u\|_{L^2(\Omega)}\| \p_{\tau_j}\p_{\tau_1}^{s-1}f\|_{H^1(\Omega)}\\
&\le \frac{\tt_0}{2} \| P u\|^2_{L^2(\Omega)}+C'\| \p_{\tau_j}\p_{\tau_1}^{s-1}f\|_{H^1(\Omega)}^2.
\end{aligned}
\eq
Using again equation \eqref{eq:f:mix0} we find 
\bq\label{elliptic:1n:2}
\begin{cases}
\Delta \p_{\tau_j}\p_{\tau_1}^{s-1}f=[\Delta, \p_{\tau_j}\p_{\tau_1}^{s-1}]f+[\p_{\tau_j}\p_{\tau_1}^{s-1},\dive](u\cdot \na y_*)+\dive \p_{\tau_j}\p_{\tau_1}^{s-1}(u\cdot \na y_*)\quad\text{in}~\Omega,\\
\p_n\p_{\tau_j}\p_{\tau_1}^{s-1}f=[\p_n, \p_{\tau_j}\p_{\tau_1}^{s-1}]f+\p_{\tau_j}\p_{\tau_1}^{s-1}\{(u\cdot \nabla y_*)\cdot n\}\quad\text{on}~\p\Omega.
\end{cases}
\eq
Multiplying the first equation by $\p_{\tau_j}\p_{\tau_1}^{s-1}f$ then integrating over $\Omega$ and using the second equation to cancel out the leading boundary term, we deduce that $h=\p_{\tau_j}\p_{\tau_1}^{s-1}f$ satisfies
\[
\begin{aligned}
\int_\Omega |\na h|^2dx&=-\int_\Omega h\Big\{[\Delta, \p_{\tau_j}\p_{\tau_1}^{s-1}]f+[\p_{\tau_j}\p_{\tau_1}^{s-1},\dive](u\cdot \na y_*)\Big\}dx+\int_{\p\Omega}h [\p_n, \p_{\tau_j}\p_{\tau_1}^{s-1}]fdS\\
&=I_{2a}+I_{2b}.
\end{aligned}
\]
We observe that $\| h\|_{L^2(\Omega)}\le C\|u\|_{H^{s-1}(\Omega)}$ and 
\[
\| [\Delta, \p_{\tau_j}\p_{\tau_1}^{s-1}]f\|_{L^2(\Omega)}+[\p_{\tau_j}\p_{\tau_1}^{s-1},\dive](u\cdot \na y_*)\|_{L^2(\Omega)}\le C\| u\|_{H^s(\Omega)},
\]
hence 
\[
|I_{2a}|\le C\| u\|_{H^s(\Omega)}\| u\|_{H^{s-1}(\Omega)}.
\] 
On the other hand, by virtue of the trace inequality and interpolation, the surface integral is controlled as
\[
\begin{aligned}
|I_{2b}|\le C\| f\|_{H^s(\p\Omega)}^2\le C'\| f\|^2_{H^{s+\mez}(\Omega)}\le C''\| f\|_{H^s(\Omega)}\| f\|_{H^{s+1}(\Omega)}\le C'''\| u\|_{H^{s-1}(\Omega)}\| u\|_{H^s(\Omega)}.
\end{aligned}
\]
It follows that
\[
\| h\|_{H^1(\Omega)}^2 \le \| h\|_{L^2(\Omega)}^2+\|\na h\|_{L^2(\Omega)}^2\le C\| u\|_{H^s(\Omega)}\| u\|_{H^{s-1}(\Omega)}.
\]
Plugging this into \eqref{1n:I2:2} and recalling \eqref{1n:I2:1} we deduce that 
\bq
|I_2|\le \frac{\tt_0}{2} \| P u\|^2_{L^2(\Omega)}+C\| u\|_{H^s(\Omega)}\|u\|_{H^{s-1}(\Omega)}.
\eq
Putting together the above considerations  we arrive at
\[
\mez\frac{d}{dt}\| Pu\|^2_{L^2}+\frac{\tt_0}{2}\| Pu\|_{L^2}\le C\Vert u\Vert_{H^s(\Omega)}^2\Vert z\Vert_{H^s(\Omega)}+C\Vert u\Vert_{H^s(\Omega)}\Vert u\Vert_{H^{s-1}(\Omega)}.
\]
Then summing over all $P\in \cD^s_1$ yields\[
\mez\frac{d}{dt}\| u\|_{s, 1}^2+\frac{\tt_0}{2}\| u\|_{s, 1}^2\le M_1\Vert u\Vert_{H^s(\Omega)}^2\Vert z\Vert_{H^s(\Omega)}+M_1\Vert u\Vert_{H^s(\Omega)}\Vert u\Vert_{H^{s-1}(\Omega)}
\]
which combined with \eqref{est:tang} for tangential derivatives leads to the desired estimate \eqref{est:mix:1n}. 
\end{proof}
\begin{lemm}\label{lemm:mix:kn}
For each  $k\in \{1, 2, ...,s\}$ there exists  $M_k>0$ such that
\bq\label{est:mix:k}
\begin{aligned}
\mez\frac{d}{dt}\| u\|^2_{s, k}+\frac{\theta_0}{2} \| u\|^2_{s, k}&\le M_k\Vert u\Vert_{H^s(\Omega)}^2\Vert z\Vert_{H^s(\Omega)}+M_k\Vert u\Vert_{H^{s-1}(\Omega)}\Vert u\Vert_{H^s(\Omega)} +M_k\| u\|_{s, k}\| u\|_{s, k-1}.
\end{aligned}
\eq
\end{lemm}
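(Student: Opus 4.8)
The plan is to prove \eqref{est:mix:k} by induction on $k$, the base case $k=1$ being Lemma \ref{lemm:mix:1n}. For $k\ge2$, assuming \eqref{est:mix:k} at level $k-1$, I would argue as in the proof of Lemma \ref{lemm:mix:1n} with $P\in\cD^s_k$, which may be taken to be $P=\p_{\tau_1}^{s-k}\p_n^k$ (a general element of $\cD^s_k$ is treated identically, the reorderings producing only commutators of order $\le s-1$). Commuting $P$ through \eqref{eq:u:2}, testing against $Pu$ and integrating, the convection term drops out because $\dive u=0$ and $u\cdot n\vert_{\p\Omega}=0$; the commutators $[P,u]\cdot\na u$, $u\cdot[P,\na]u$ and $P([\P,u\cdot\na]z)$ are bounded, as in the case $k=1$ and using Theorem \ref{theo:cmt} for the last one, by $C\|u\|_{H^s}^2\|z\|_{H^s}$; and, writing $\P(u\cdot\na y_*)=u\cdot\na y_*-\na f$ with $f$ solving \eqref{eq:f:mix0}, the term $\int_\Omega Pu\cdot P(u\cdot\na y_*)\,dx$ yields, after commuting $P$ past $\na y_*$, the damping $\theta_0\|Pu\|_{L^2}^2$ from \eqref{convexity} plus a lower-order local commutator. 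Everything thus reduces to controlling $\int_\Omega Pu\cdot P\na f\,dx$.

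Using the decomposition \eqref{decompose:na} with $g=Pf$, write $\int_\Omega Pu\cdot P\na f\,dx=I_1+I_2+I_3+I_4$, where $I_1$ carries the weight $(1-\chi_2)(Pu\cdot n)$, $I_2$ the weight $(1-\chi_2)(Pu\cdot\tau_j)$, $I_3$ is the $\chi_2$-piece and $I_4$ the local commutator $[P,\na]f$. Exactly as for $k=1$, $I_3$ (integrated by parts using $\dive u=0$ and $\chi_2\equiv0$ near $\p\Omega$) and $I_4$ are bounded by $C\|u\|_{H^s}\|u\|_{H^{s-1}}$. For $I_1$ the crucial observation is that $\dive u=0$ together with the identity \eqref{pnt} trades one normal derivative in $Pu\cdot n$ for tangential ones, giving $\|Pu\cdot n\|_{L^2}\le C\|u\|_{s,k-1}+C\|u\|_{H^{s-1}}$, the generalization of \eqref{1nu.n}; for the other factor one writes $\p_n(Pf)=\p_n^2 h$ modulo lower-order commutators, with $h:=\p_{\tau_1}^{s-k}\p_n^{k-1}f$, differentiates the Neumann problem \eqref{eq:f:mix0} to obtain an elliptic system for $h$ whose leading forcing $\dive\p_{\tau_1}^{s-k}\p_n^{k-1}(u\cdot\na y_*)$ carries at most $k$ normal derivatives by \eqref{nav:tn}--\eqref{pnt}, so that the $H^2$ elliptic estimate gives $\|h\|_{H^2}\le C\|u\|_{s,k}+C\|u\|_{H^{s-1}}$; hence $|I_1|\le C\|u\|_{s,k}\|u\|_{s,k-1}+C\|u\|_{H^s}\|u\|_{H^{s-1}}$ after using $\|u\|_{s,k-1}\le\|u\|_{s,k}$. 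For $I_2$ one sets $\tilde h:=\p_{\tau_j}\p_{\tau_1}^{s-k}\p_n^{k-1}f$, so that $\p_{\tau_j}(Pf)=\p_n\tilde h$ modulo lower-order commutators while $\|(1-\chi_2)(Pu\cdot\tau_j)\|_{L^2}\le C\|u\|_{s,k}$, and runs the energy estimate for the elliptic problem for $\tilde h$ as in \eqref{elliptic:1n:2}: the gain $\|\tilde h\|_{L^2}\le C\|u\|_{H^{s-1}}$ comes from $\tilde h$ containing only $s$ derivatives of $f$ (and $\|f\|_{H^s}\le C\|u\|_{H^{s-1}}$), the bulk forcing is bounded in $L^2$ by $C\|u\|_{H^s}$, and the surface term --- after exchanging the normal derivatives of $f$ on $\p\Omega$ for tangential ones and forcing traces via the Neumann condition and \eqref{pnt2} --- is controlled by trace and interpolation as for $k=1$; this gives $\|\tilde h\|_{H^1}^2\le C\|u\|_{H^s}\|u\|_{H^{s-1}}$, hence, by Young's inequality, $|I_2|\le\frac{\theta_0}{2}\|Pu\|_{L^2}^2+C\|u\|_{H^s}\|u\|_{H^{s-1}}$, the first term being absorbed by the damping. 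Summing over $P\in\cD^s_k$ and adding the estimate at level $k-1$ yields \eqref{est:mix:k}.

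The step I expect to be the main obstacle is the bookkeeping of normal derivatives: the elliptic problems for $h$ and $\tilde h$ a priori involve derivatives of $f$ of order up to $s+2$, beyond what $u\in H^s$ controls, and it is only the structural facts $\dive u=0$, the Neumann condition for $f$, and the identities \eqref{pnt}--\eqref{pnt2} (valid where $\chi_1\equiv1$, in particular on $\supp(1-\chi_2)$) that collapse them to quantities of level $\le k$ and thereby produce the $\|u\|_{s,k}\|u\|_{s,k-1}$ and $\|u\|_{H^s}\|u\|_{H^{s-1}}$ structure. A secondary point requiring care is that the $k-1$ normal derivatives of $f$ appearing in the traces $h\vert_{\p\Omega}$ and $\tilde h\vert_{\p\Omega}$ are not determined by boundary data alone and must first be exchanged for tangential derivatives and forcing traces by iterating \eqref{pnt2} and the Neumann condition; the contributions of the transition region $\{0<\chi_1<1\}$, at a fixed positive distance from $\p\Omega$, are of interior type and absorbed by the bound of Lemma \ref{lemm:interior} in the final assembly of the hierarchy.
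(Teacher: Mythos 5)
Your proposal is essentially the same argument as the paper's: induction on $k$, commute $P\in\cD^s_k$ through \eqref{eq:u:2}, dispose of the advection term and the local and nonlocal commutators as in the base case (Theorem~\ref{theo:cmt} for $[\P,u\cdot\na]z$), extract the damping $\theta_0\|Pu\|_{L^2}^2$ from $\int Pu\cdot P(u\cdot\na y_*)\,dx$ after commuting past $\na y_*$, and then everything rides on controlling $\int Pu\cdot P\na f\,dx$ with $f$ from \eqref{eq:f:mix0}, using $\dive u=0$ together with \eqref{pnt} to trade a normal derivative of $u$ for tangential ones and \eqref{pnt2} plus elliptic estimates to count normal derivatives of $f$, finishing with Young's inequality to absorb $\frac{\theta_0}{2}\|Pu\|_{L^2}^2$ into the damping.

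The one place where your route genuinely differs from the paper's is in the set-up of the auxiliary elliptic problems. You differentiate \eqref{eq:f:mix0} by $\p_{\tau_1}^{s-k}\p_n^{k-1}$ (and $\p_{\tau_j}\p_{\tau_1}^{s-k}\p_n^{k-1}$), quantities that still carry $k-1$ normal derivatives of $f$, and you correctly flag that the resulting Neumann trace $\p_n h\vert_{\p\Omega}$ involves $\p_n^k f\vert_{\p\Omega}$, which is \emph{not} prescribed; extracting it requires iterating \eqref{pnt2} and the Neumann condition, with the boundary traces of $\p_n^j\dive(u\cdot\na y_*)$ appearing at each step. This can be made to work, but the paper's proof sidesteps the iteration by a single application of \eqref{pnt2} \emph{in the bulk}: writing $\p_n^{k+1}f=\p_n^{k-1}\big(-\sum_j\p_{\tau_j}^2 f+\dive(u\cdot\na y_*)+\text{l.o.}\big)$ on $\supp(1-\chi_2)$ before any elliptic estimate is invoked. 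The piece with $\dive(u\cdot\na y_*)$ is peeled off and estimated directly (it is the paper's $I_2$, a pure derivative-counting argument, no ellipticity needed), and the elliptic machinery is applied only to $\p_{\tau_1}^{s-k-1}\p_{\tau_j}f$, which carries \emph{no} normal derivatives of $f$; its Neumann trace is then $\p_{\tau_1}^{s-k-1}\p_{\tau_j}\{(u\cdot\na y_*)\cdot n\}$ plus a harmless commutator, so the $H^{k+1}$ Neumann estimate closes in one shot with data bounds $\|g_1\|_{H^{k-1}(\Omega)}+\|g_2\|_{H^{k-1/2}(\p\Omega)}\le C\|u\|_{s,k}+C\|u\|_{H^{s-1}}$. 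In short: same skeleton and same key structural ingredients (\eqref{pnt}, \eqref{pnt2}, normal-derivative bookkeeping, absorption by damping), but the paper uses \eqref{pnt2} once in the bulk to land on a tangentially-differentiated auxiliary with prescribed Neumann data, whereas you propose an auxiliary with residual normal derivatives and pay for it with a boundary iteration whose details you have, appropriately, identified as the delicate part to verify.
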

\begin{proof}
  The base case $k=1$ has been proved in Lemma \ref{lemm:mix:1n}. Assume  \eqref{est:mix:k} for some $k\in \{1, 2,...,s-1\}$ we prove it for $k+1$ in place of $k$. Let $P\in \cD^s_{k+1}$. We assume without loss of generality that $P=\p_{\tau_1}^{s-k-1}\p_n^{k+1}$. Commuting equation \eqref{eq:u:2} with $P$ gives
\bq
\begin{aligned}
\mez&\frac{d}{dt}\int_{\Omega} |Pu|^2\; dx+\int_{\Omega}Pu\cdot P\P (u\cdot \nabla y_*) \; dx\\
&=-\int_{\Omega}Pu\cdot \big([P,u]\cdot \nabla u\big)dx-\int_{\Omega}Pu\cdot \big(u\cdot  [P, \na]u\big)dx -\int_{\Omega}Pu\cdot P\big([\P, u\cdot\nabla] z\big)dx.
\end{aligned}
\eq
As in the proof of Lemma \ref{lemm:mix:1n} it suffices to 
 treat the damping term 
 \[
\int_{\Omega}Pu\cdot P\P (u\cdot \nabla y_*) \; dx=\int_{\Omega}Pu\cdot P (u\cdot \nabla y_*) \; dx-\int_{\Omega}Pu\cdot P\na f \; dx
\]
where $f$ solves \eqref{eq:f:mix0}:
\bq\label{eq:f:mix}
\begin{cases}
\Delta f=\dive(u\cdot \na y_*)\quad\text{in}~\Omega,\\
\p_nf=(u\cdot \nabla y_*)\cdot n\quad\text{on}~\p\Omega.
\end{cases}
\eq
Commuting $P$ with $\na y_*$ gives
\[
\int_{\Omega}Pu\cdot P (u\cdot \nabla y_*) \; dx=\int_{\Omega}Pu\cdot (P u\cdot \nabla y_*) \; dx+\int_{\Omega}Pu\cdot [P, \nabla y_*]\cdot u\; dx
\]
where the local commutator $[P, \nabla y_*]\cdot u$ satisfies
\[
\| [P, \nabla y_*]\cdot u\|_{L^2(\Omega)}\le C\| y_*\|_{H^{s+1}(\Omega)}\| u\|_{H^{s-1}(\Omega)},
\]
and by the convexity assumption \eqref{convexity}, 
\[
\int_{\Omega}Pu\cdot (P u\cdot \nabla y_*) \; dx\ge \theta_0\| Pu\|_{L^2(\Omega)}^2.
\]
Then it remains to prove that
\bq\label{est:mix:main}
\int_{\Omega}Pu\cdot P\na f \; dx\le C\| u\|_{H^{s}(\Omega)}\| u\|_{H^{s-1}(\Omega)}+C\| u\|_{s, k+1}\| u\|_{s, k}.
\eq
 To this end, let us write using the decomposition \eqref{decompose:na} that for $k\ge 1$,
\[
\begin{aligned}
P\na f &=\na \p_{\tau_1}^{s-k-1}\p_n^{k-1}\p_n^2f+[\p_{\tau_1}^{s-k-1}\p_n^{k+1}, \na ]f\\
&=(1-\chi_2)\na \p_{\tau_1}^{s-k-1}\p_n^{k-1}\p_n^2f+\chi_2\na \p_{\tau_1}^{s-k-1}\p_n^{k-1}\p_n^2f+[\p_{\tau_1}^{s-k-1}\p_n^{k+1}, \na ]f.
\end{aligned}
\]
The commutator is a lower order term in the sense that
\[
\| [\p_{\tau_1}^{s-k-1}\p_n^{k+1}, \na ]f\|_{L^2(\Omega)}\le C\| f\|_{H^s(\Omega)}\le C\| u\|_{H^{s-1}(\Omega)},
\]
leading to the bound
\[
\int_{\Omega}Pu\cdot [\p_{\tau_1}^{s-k-1}\p_n^{k+1}, \na ]f dx\le C\| Pu\|_{L^2(\Omega)}\| u\|_{H^{s-1}(\Omega)}
\le C\| u\|_{H^{s}(\Omega)}\| u\|_{H^{s-1}(\Omega)}.
\]
Integration by parts as in \eqref{I3:1n} yields
\[
\la\int_{\Omega}Pu\cdot \chi_2\na \p_{\tau_1}^{s-k-1}\p_n^{k-1}\p_n^2f\ra\le C\|u\|_{H^s(\Omega)}\| u\|_{H^{s-1}(\Omega)}. 
\]
In the main term $(1-\chi_2)\na \p_{\tau_1}^{s-k-1}\p_n^{k-1}\p_n^2f$, since the support of $(1-\chi_2)$ is contained in $\Omega_{2\kappa}$, we can use \eqref{pnt2} and \eqref{eq:f:mix} to write 
\[
\begin{aligned}
\na \p_{\tau_1}^{s-k-1}\p_n^{k-1}\p_n^2f&=-\na \p_{\tau_1}^{s-k-1}\p_n^{k-1}\p_{\tau_j}^2f+\na \p_{\tau_1}^{s-k-1}\p_n^{k-1}\dive(u\cdot \na y_*)\\
&\qquad+\na \p_{\tau_1}^{s-k-1}\p_n^{k-1}\Big[\na f\cdot (n\cdot \na) n+\na f\cdot (\tau_j\cdot \na) \tau_j\Big]\\
&=-\na \p_n^{k-1}\p_{\tau_1}^{s-k-1}\p_{\tau_j}^2f-\na [\p_{\tau_1}^{s-k-1}, \p_n^{k-1}]\p_{\tau_j}^2f+\na \p_{\tau_1}^{s-k-1}\p_n^{k-1}\dive(u\cdot \na y_*)\\
&\qquad+\na \p_{\tau_1}^{s-k-1}\p_n^{k-1}\Big[\na f\cdot (n\cdot \na) n+\na f\cdot (\tau_j\cdot \na) \tau_j\Big]
\end{aligned}
\]
in $\Omega_{2\kappa}$, where the sums over $j$ were taken. Since the  commutator $[\p_{\tau_1}^{s-k-1}, \p_n^{k-1}]\p_{\tau_j}^2f$ is bounded in $H^1(\Omega)$ by $C\| f\|_{H^s(\Omega)}\le C\| u\|_{H^{s-1}(\Omega)}$ we obtain
\bq
\begin{aligned}
\left|\int_\Omega (1-\chi_2)Pu\cdot \na [\p_{\tau_1}^{s-k-1}, \p_n^{k-1}]\p_{\tau_j}^2fdx\right|  \le C\| u\|_{H^{s}(\Omega)}\| u\|_{H^{s-1}(\Omega)}.
\end{aligned}
\eq
In addition, we have
\bq
\left|\int_\Omega (1-\chi_2)Pu\cdot  \na \p_{\tau_1}^{s-k-1}\p_n^{k-1}\Big[\na f\cdot (n\cdot \na) n+\na f\cdot (\tau_j\cdot \na) \tau_j\Big]dx \right|\le C\| u\|_{H^{s}(\Omega)}\| u\|_{H^{s-1}(\Omega)}.
\eq
Thus, we are left with the two integrals
\[
\begin{aligned}
&I_1=\int_\Omega (1-\chi_2)Pu \cdot \na \p_n^{k-1}\p_{\tau_1}^{s-k-1}\p_{\tau_j}^2fdx,\\
&I_2=\int_\Omega (1-\chi_2)Pu \cdot\na \p_{\tau_1}^{s-k-1}\p_n^{k-1}\dive(u\cdot \na y_*)dx.
\end{aligned}
\]
{\it Estimate for $I_1$}. We claim that
\bq\label{est:f:mixk:0}
\|\na \p_n^{k-1}\p_{\tau_1}^{s-k-1}\p_{\tau_j}^2f\|_{L^2(\Omega)}\le \| u\|_{H^{s-1}(\Omega)}+C\| u\|_{s, k}.
\eq
First, taking $\p_{\tau_1}^{s-k-1}\p_{\tau_j}$ of \eqref{eq:f:mix} gives
\bq\label{eq:dt:f}
\begin{cases}
\begin{aligned}
\Delta \p_{\tau_1}^{s-k-1}\p_{\tau_j}f&=[\Delta, \p_{\tau_1}^{s-k-1}\p_{\tau_j}]f+[\p_{\tau_1}^{s-k-1}\p_{\tau_j},\dive](u\cdot \na y_*)+\dive \p_{\tau_1}^{s-k-1}\p_{\tau_j}(u\cdot \na y_*)\\
& :=g_1\quad\text{in}~\Omega,
\end{aligned}
\\
\p_n\p_{\tau_1}^{s-k-1}\p_{\tau_j}f=[\p_n, \p_{\tau_1}^{s-k-1}\p_{\tau_j}]f+\p_{\tau_1}^{s-k-1}\p_{\tau_j}\{(u\cdot \nabla y_*)\cdot n\}:=g_2\quad\text{on}~\p\Omega.
\end{cases}
\eq
In view of the bound
\[
\begin{aligned}
\|\na \p_n^{k-1}\p_{\tau_1}^{s-k-1}\p_{\tau_j}^2f\|_{L^2(\Omega)}&\le \|(\na \p_n^{k-1}\p_{\tau_j})\p_{\tau_1}^{s-k-1}\p_{\tau_j}f\|_{L^2(\Omega)}+\|\na \p_n^{k-1}[\p_{\tau_1}^{s-k-1}, \p_{\tau_j}]\p_{\tau_j}f\|_{L^2(\Omega)}\\
& \le C\|\p_{\tau_1}^{s-k-1}\p_{\tau_j}f\|_{H^{k+1}(\Omega)}+C\| f\|_{H^s(\Omega)}\\
& \le C\|\p_{\tau_1}^{s-k-1}\p_{\tau_j}f\|_{H^{k+1}(\Omega)}+C'\| u\|_{H^{s-1}(\Omega)}
\end{aligned}
\]
and elliptic estimates for \eqref{eq:dt:f} we have
\bq\label{est:f:mixk}
\|\na \p_n^{k-1}\p_{\tau_1}^{s-k+1}f\|_{L^2(\Omega)}\le C\|g_1\|_{H^{k-1}(\Omega)}+C\| g_2\|_{H^{k-\mez}(\p\Omega)}+C\| u\|_{H^{s-1}(\Omega)}.
\eq
 The $H^{k-1}$ norm of $g_1$ is bounded as
\[
\begin{aligned}
\| g_1\|_{H^{k-1}(\Omega)}&\le \|[\Delta, \p_{\tau_1}^{s-k-1}\p_{\tau_j}]f\|_{H^{k-1}(\Omega)}+\|[\p_{\tau_1}^{s-k-1}\p_{\tau_j},\dive](u\cdot \na y_*)\|_{H^{k-1}(\Omega)}\\
&\qquad +\|\dive \p_{\tau_1}^{s-k-1}\p_{\tau_j}(u\cdot \na y_*)\|_{H^{k-1}(\Omega)}\\
&\le C\|f\|_{H^{s}(\Omega)}+C\|u\|_{H^{s-1}(\Omega)}+C\| \p_{\tau_1}^{s-k-1}\p_{\tau_j}(u\cdot \na y_*)\|_{H^{k}(\Omega)}\\
&\le C'\|u\|_{H^{s-1}(\Omega)}+C\| \p_{\tau_1}^{s-k-1}\p_{\tau_j}(u\cdot \na y_*)\|_{H^{k}(\Omega)}.
\end{aligned}
\]
We observe that there are at most $k$ normal derivatives appearing when measure $\p_{\tau_1}^{s-k-1}\p_{\tau_j}(u\cdot \na y_*)$ in $H^{k-1}(\Omega)$, hence
\[
\| \p_{\tau_1}^{s-k-1}\p_{\tau_j}(u\cdot \na y_*)\|_{H^{k}(\Omega)}\le C\| u\|_{s, k}.
\]
Consequently
\bq\label{est:mix:g1}
\| g_1\|_{H^{k-1}(\Omega)}\le C\| u\|_{H^{s-1}(\Omega)}+C\| u\|_{s, k}.
\eq
As for $g_2$ we first use the trace theorem to have
\[
\| [\p_n, \p_{\tau_1}^{s-k-1}\p_{\tau_j}]f\|_{H^{k-\mez}(\p\Omega)}\le C\| [\p_n, \p_{\tau_1}^{s-k-1}\p_{\tau_j}]f\|_{H^{k}(\Omega)}\le C\| f\|_{H^{s}(\Omega)}\le C\| u\|_{H^{s-1}(\Omega)}.
\]
The fact that $k\ge 1$ was used in the first inequality. Then we write
\[
 \p_{\tau_1}^{s-k-1}\p_{\tau_j}(u\cdot \nabla y_*)=(\p_{\tau_1}^{s-k-1}\p_{\tau_j}u)\cdot \nabla y_*+ [\p_{\tau_1}^{s-k-1}\p_{\tau_j}, \nabla y_*\cdot ]u
 \]
 where the commutator can be bounded using the trace theorem as follows
 \[
 \|  [\p_{\tau_1}^{s-k-1}\p_{\tau_j}, \nabla y_*\cdot ] u\|_{H^{k-\mez}(\p\Omega)}\le C\|  [\p_{\tau_1}^{s-k-1}\p_{\tau_j}, \nabla y_*\cdot] u\|_{H^{k}(\Omega)}\le C\| u\|_{H^{s-1}(\Omega)}.
 \]
 In addition, 
 \[
 \begin{aligned}
\|(\p_{\tau_1}^{s-k-1}\p_{\tau_j}u)\cdot \nabla y_*\|_{H^{k-\mez}(\p\Omega)}&\le C\|\p_{\tau_1}^{s-k-1}\p_{\tau_j}u\|_{H^{k-\mez}(\p\Omega)}\\
&\le C\|\p_{\tau_1}^{s-k-1}\p_{\tau_j}u\|_{H^k(\Omega)}\\
&\le C\| u\|_{s, k}.
 \end{aligned}
 \]
Thus,
\bq\label{est:mix:g2}
\| g_2\|_{H^{k-\mez}(\p\Omega)}\le \| u\|_{H^{s-1}(\Omega)}+C\| u\|_{s, k}.
\eq
Combining \eqref{est:f:mixk}, \eqref{est:mix:g1} and \eqref{est:mix:g2} leads to the bound \eqref{est:f:mixk:0} which implies that
\bq\label{est:mix:I1}
\begin{aligned}
I_1&\le C\| Pu\|_{L^2(\Omega)}\| u\|_{H^{s-1}(\Omega)}+C\| Pu\|_{L^2(\Omega)}\| u\|_{s, k}\\
&\le  C\| u\|_{H^s(\Omega)}\| u\|_{H^{s-1}(\Omega)}+C\| u\|_{s, k+1}\| u\|_{s, k}.
\end{aligned}
\eq
{\it Estimate for $I_2$.}
Decomposing $\na ={\tau_j}\p_{\tau_j}+n\p_n $ in $\Omega_{2\kappa}\supset \supp(1-\chi_2)$ gives $I_2=I_{2a}+I_{2b}$ where
\[
\begin{aligned}
&I_{2a}=\int_\Omega (1-\chi_2)\big\{(\p_{\tau_1}^{s-k-1}\p_n^{k+1}u) \cdot {\tau_j}\big\}\big\{ \p_{\tau_j}\p_{\tau_1}^{s-k-1}\p_n^{k-1}\dive(u\cdot \na y_*)\big\}dx,\\
&I_{2b}=\int_\Omega (1-\chi_2)\big\{(\p_{\tau_1}^{s-k-1}\p_n^{k+1}u) \cdot n\big\}\big\{ \p_n \p_{\tau_1}^{s-k-1}\p_n^{k-1}\dive(u\cdot \na y_*)\big\}dx.
\end{aligned}
\]
We notice that there are at most $k$ normal derivatives in $\p_{\tau_j}\p_{\tau_1}^{s-k-1}\p_n^{k-1}\dive(u\cdot \na y_*)$, hence
\[
|I_{2a}|\le C\| u\|_{s, k+1}\| u\|_{s, k}.
\]
As for $I_{2b}$ we write using \eqref{pnt} that 
\[
\begin{aligned}
(\p_{\tau_1}^{s-k-1}\p_n^{k+1}u) \cdot n&=\p_{\tau_1}^{s-k-1}\p_n^k (\p_nu\cdot n)+[\p_{\tau_1}^{s-k-1}\p_n^k,n\cdot ] \p_nu\\
&=-\p_{\tau_1}^{s-k-1}\p_n^k (\p_{\tau_j} u\cdot \tau_j)+[\p_{\tau_1}^{s-k-1}\p_n^k,n\cdot ] \p_nu.
\end{aligned}
\]
It is readily seen that 
\[
\la\int_\Omega  (1-\chi_2) \big\{[\p_{\tau_1}^{s-k-1}\p_n^k,n\cdot ] \p_nu \big\} \big\{\p_n \p_{\tau_1}^{s-k-1}\p_n^{k-1}\dive(u\cdot \na y_*) \big\}dx\ra\le C\| u\|_{H^{s-1}(\Omega)}\| u\|_{H^s(\Omega)}.
\]
On the other hand, there are at most $k$ normal derivatives in $\p_{\tau_1}^{s-k-1}\p_n^k (\p_{\tau_j} u\cdot \tau_j)$, and thus
\[
\la \int_\Omega(1-\chi_2) \big\{\p_{\tau_1}^{s-k-1}\p_n^k (\p_{\tau_j} u\cdot \tau_j)\big\}\big\{\p_n \p_{\tau_1}^{s-k-1}\p_n^{k-1}\dive(u\cdot \na y_*) \big\}dx\ra\le C\| u\|_{s, k}\| u\|_{s, k+1}.
\]
All together we have prove that 
\bq\label{est:mix:I2}
|I_2|\le C\| u\|_{H^s(\Omega)}\| u\|_{H^{s-1}(\Omega)}+C\| u\|_{s, k+1}\| u\|_{s, k}
\eq
In view of \eqref{est:mix:I1} and \eqref{est:mix:I2} we finish the proof of \eqref{est:mix:main}, and hence the proof of \eqref{est:mix:k} with  $k+1$ in place of $k$.
\end{proof}
\subsection{$H^s$ estimate for $u$}
We have proved in Lemmas \ref{lemm:tang}, \ref{lemm:mix:1n} and \ref{lemm:mix:kn} that
\bq\label{mix:1n}
\begin{aligned}
\mez\frac{d}{dt}\| u\|^2_{s, 0}+\theta_0 \| u\|^2_{s, 0}&\le M_0\Vert u\Vert_{H^s(\Omega)}^2\Vert z\Vert_{H^s(\Omega)}+M_0\Vert u\Vert_{H^{s-1}(\Omega)}\Vert u\Vert_{H^s(\Omega)}
\end{aligned}
\eq
and
\bq\label{mix:kn}
\begin{aligned}
\mez\frac{d}{dt}\| u\|^2_{s, k}+\frac{\theta_0}{2} \| u\|^2_{s, k}&\le M_k\Vert u\Vert_{H^s(\Omega)}^2\Vert z\Vert_{H^s(\Omega)}+M_k\Vert u\Vert_{H^{s-1}(\Omega)}\Vert u\Vert_{H^s(\Omega)}\\
&\quad +M_k\| u\|_{s, k}\| u\|_{s, k-1}
\end{aligned}
\eq
for all $k\in \{1, 2,...,s\}$. Applying Young's inequality yields
\[
\begin{aligned}
M_k\| u\|_{s, j}\| u\|_{s, j-1}\le \frac{\theta_0}{4}\| u\|_{s, j-1}^2+M_j'\| u\|_{s, j-1}^2,\quad 1\le j\le s.
\end{aligned}
\]
It follows from this and \eqref{mix:kn} with $k=s$ and $k=s-1$ that
\bq\label{mix:sn}
\begin{aligned}
\frac{d}{dt}\| u\|^2_{s, s}+\frac{\theta_0}{2}\| u\|^2_{s, s}&\le 2M_s\Vert u\Vert_{H^s(\Omega)}^2\Vert z\Vert_{H^s(\Omega)}+2M_s\Vert u\Vert_{H^{s-1}(\Omega)}\Vert u\Vert_{H^s(\Omega)} +2M'_s\| u\|_{s, s-1}^2
\end{aligned}
\eq
and
\bq\label{mix:s-1n}
\begin{aligned}
\frac{d}{dt}\| u\|^2_{s, s-1}+\frac{\theta_0}{2} \| u\|^2_{s, s-1}&\le 2M_{s-1}\Vert u\Vert_{H^s(\Omega)}^2\Vert z\Vert_{H^s(\Omega)} +2M_{s-1}\Vert u\Vert_{H^{s-1}(\Omega)}\Vert u\Vert_{H^s(\Omega)}\\
&\quad+2M'_{s-1}\| u\|_{s, s-2}^2.
\end{aligned}
\eq 
Let $N_{s-1}>0$ be such that $\frac{\theta_0}{2}N_{s-1}-2M_{s-1}'=\frac{\theta_0}{2}$. Multiplying \eqref{mix:s-1n} by $N_{s-1}$ then adding the resulting  inequality to \eqref{mix:sn} we obtain
\[
\begin{aligned}
\frac{d}{dt}\big(\| u\|^2_{s, s}+N_{s-1}\| u\|^2_{s, s-1}\big)&+\frac{\theta_0}{2} \big(\| u\|^2_{s, s}+\| u\|^2_{s, s-1}\big)\le N'_{s-1}\Vert u\Vert_{H^s(\Omega)}^2\Vert z\Vert_{H^s(\Omega)}\\
&+N'_{s-1}\Vert u\Vert_{H^{s-1}(\Omega)}\Vert u\Vert_{H^s(\Omega)}+N'_{s-1}\| u\|_{s, s-2}^2
\end{aligned}
\]
for some $N'_{s-1}>0$. Continuing this process, one can find $s+1$ positive constants $B$ and $N_j$, $0\le j\le s-1$  such that
\[
\begin{aligned}
\frac{d}{dt}\big(\| u\|_{s, s}^2+\sum_{j=0}^{s-1}N_j\| u\|^2_{s, j}\big)&+\frac{\theta_0}{2}\sum_{j=0}^s\| u\|^2_{s, j}\le B\Vert u\Vert_{H^s(\Omega)}^2\Vert z\Vert_{H^s(\Omega)} +B\Vert u\Vert_{H^{s-1}(\Omega)}\Vert u\Vert_{H^s(\Omega)}.
\end{aligned}
\]
Setting
\[
Z^2(u)=\| u\|_{s, s}^2+\sum_{j=0}^{s-1}N_j\| u\|^2_{s, j}
\]
and 
\[
2\theta_1=\frac{\theta_0}{2\max_{0\le j\le s-1}\{ 1, N_j\}},
\]
we arrive at
\bq\label{mix:est:sum}
\begin{aligned}
&\frac{d}{dt}Z^2(u)+2\theta_1 Z^2(u)\le B\Vert u(t)\Vert_{H^s(\Omega)}^2\Vert z(t)\Vert_{H^s(\Omega)}+B\Vert u\Vert_{H^{s-1}(\Omega)}\Vert u\Vert_{H^s(\Omega)}.
\end{aligned}
\eq
Set 
\bq\label{def:W}
W^2(u)=Z^2(u)+\| \chi_2u\|_{H^s(\Omega)}^2
\eq
where $\chi_2$ is given by \eqref{def:chi2}. Combining \eqref{mix:est:sum} with \eqref{est:interior} one can find a constant $C>0$ such that
\bq\label{est:W}
\begin{aligned}
&\frac{d}{dt}W^2(u)+2\theta_1 W^2(u)\le C\Vert u(t)\Vert_{H^s(\Omega)}^2\Vert z(t)\Vert_{H^s(\Omega)}+C\Vert u\Vert_{H^{s-1}(\Omega)}\Vert u\Vert_{H^s(\Omega)}.
\end{aligned}
\eq
To recover the $H^s$ estimate for $u$ from the preceding estimate on $W(u)$, we prove the next lemma.
\begin{lemm}
There exists $A>0$ depending only on $s$ such that
\bq\label{HZ}
\frac{1}{A}W^2(u)\le \| u\|_{H^s(\Omega)}^2\le AW^2(u)+A\|u \|^2_{L^2(\Omega)}
\eq
for any $H^s$ vector field $u:\Omega\to \Rr^d$. 
\end{lemm}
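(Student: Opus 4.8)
The estimate \eqref{HZ} is a norm‑equivalence statement: it compares the ordinary $H^s$ norm with the one built from the boundary‑adapted derivatives $\partial_n,\partial_{\tau_j}$ near $\partial\Omega$ together with the interior piece $\chi_2 u$. I would dispatch the lower bound $\tfrac1A W^2(u)\le\|u\|_{H^s(\Omega)}^2$ first, since it is elementary: multiplication by the smooth function $\chi_2$ is bounded on $H^s(\Omega)$, and each $\partial_{\sigma_j}$ is a first‑order differential operator with $C^\infty$ coefficients supported in $\Omega_{3\kappa}$, so $\|Pu\|_{L^2(\Omega)}\le C\|u\|_{H^s(\Omega)}$ for every $P\in\cD^s_j$, $0\le j\le s$; summing over the finitely many such $P$ and over $j$ gives $Z^2(u)\le C\|u\|_{H^s(\Omega)}^2$, and adding $\|\chi_2u\|_{H^s(\Omega)}^2\le C\|u\|_{H^s(\Omega)}^2$ yields $W^2(u)\le C\|u\|_{H^s(\Omega)}^2$.

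For the upper bound the plan is to localize: write $u=\chi_2u+(1-\chi_2)u$, so $\|u\|_{H^s(\Omega)}\le\|\chi_2u\|_{H^s(\Omega)}+\|(1-\chi_2)u\|_{H^s(\Omega)}$, and note $\|\chi_2u\|_{H^s(\Omega)}\le W(u)$ by \eqref{def:W}. To handle $(1-\chi_2)u$ I would fix an open set $U$ with $\Omega_{2\kappa}\subset U$, $\overline U$ compact, $\overline U\subset\{\chi_1\equiv1\}$ and $n,\tau_j\in C^\infty(\overline U)$ (possible by the construction in Section \ref{derivativesystem} and \eqref{def:chi1}); on $U$ the set $\{n,\tau_1,\dots,\tau_{d-1}\}$ is an orthonormal frame with smooth entries and $\partial_n,\partial_{\tau_j}$ are the genuine directional derivatives, so \eqref{decompose:na1} and \eqref{nav:tn} hold throughout $U$. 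Then for any multi‑index $\alpha$ with $|\alpha|=s$, iterating on $U$ the identity $\partial_k=n_k\partial_n+\sum_{j=1}^{d-1}\tau_{j,k}\partial_{\tau_j}$ and commuting the bounded smooth frame coefficients past the first‑order operators produces a representation
\[
D^\alpha u=\sum_{k=0}^{s}\sum_{P\in\cD^s_k}c_{\alpha,P}\,Pu+R_\alpha\qquad\text{on }U,
\]
with $c_{\alpha,P}\in C^\infty(\overline U)$ and $R_\alpha$ a finite sum of terms $(\text{smooth coefficient})\cdot D^\beta u$ with $|\beta|\le s-1$; every reordering or coefficient commutation drops the order by one and is therefore absorbed into $R_\alpha$. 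Multiplying by $1-\chi_2$, whose support lies in $\Omega_{2\kappa}\subset U$, and taking $L^2$ norms gives $\|(1-\chi_2)D^\alpha u\|_{L^2(\Omega)}\le C\|u\|_{s,s}+C\|u\|_{H^{s-1}(\Omega)}$. Combined with the trivial bound for all derivatives of $(1-\chi_2)u$ of order $\le s-1$, and the inequality $\|u\|_{s,s}\le W(u)$ which follows from the definition of $W$, this yields
\[
\|u\|_{H^s(\Omega)}\le C\,W(u)+C\|u\|_{H^{s-1}(\Omega)}.
\]

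The final step is to remove the lower‑order term by interpolation: on the bounded smooth domain $\Omega$ one has $\|u\|_{H^{s-1}(\Omega)}\le C\|u\|_{H^s(\Omega)}^{1-1/s}\|u\|_{L^2(\Omega)}^{1/s}$, hence by Young's inequality $\|u\|_{H^{s-1}(\Omega)}\le\delta\|u\|_{H^s(\Omega)}+C_\delta\|u\|_{L^2(\Omega)}$ for every $\delta>0$; choosing $\delta$ small enough that $C\delta\le\tfrac12$ and absorbing into the left‑hand side gives $\|u\|_{H^s(\Omega)}^2\le A\,W^2(u)+A\|u\|_{L^2(\Omega)}^2$, the desired upper bound. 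The only delicate point in the whole argument is the frame expansion of $D^\alpha u$: one must check that all top‑order contributions are genuinely of the form $Pu$ with $P$ a product of exactly $s$ operators from $\{\partial_n,\partial_{\tau_1},\dots,\partial_{\tau_{d-1}}\}$, so that they are controlled by $\|u\|_{s,s}$ and hence by $W(u)$, while every commutator between one of these first‑order operators and a frame coefficient, or between two of them, produces only terms of order $\le s-1$ that are handled by the interpolation step.
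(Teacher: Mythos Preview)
Your proof is correct and follows essentially the same strategy as the paper's: both obtain the lower bound trivially, and for the upper bound both replace standard partial derivatives near the boundary by products of the frame operators $\partial_n,\partial_{\tau_j}$, absorb commutators into an $H^{s-1}$ remainder, and finish by interpolation and Young's inequality to trade $\|u\|_{H^{s-1}}$ for $\|u\|_{L^2}$. The only cosmetic difference is that you localize first via $u=\chi_2u+(1-\chi_2)u$ and then expand $D^\alpha$ all at once in the orthonormal frame on $U$, whereas the paper applies the pointwise identity \eqref{nav:tn} to one factor of $\nabla$ at a time and commutes iteratively; the substance is the same.
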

\begin{proof}
First, the inequality 
\[
Z^2(u)\le A\| u\|_{H^s(\Omega)}^2+A\|\chi_2u\|^2_{H^s(\Omega)}
\]
is obvious if $A$ is sufficiently large.

Next recall from \eqref{decompose:na} and \eqref{nav:tn} that for any $w:\Omega\to \Rr^2$  it holds that
\bq\label{na:tn}
|\na w|^2\le |\p_n w|^2+\sum_{j=1}^d|\p_{\tau_j} w|^2+\| \chi_2 \na w\|_{L^2(\Omega)}^2.
\eq
In the rest of this proof, the sum over $j\in \{1, ...,d-1\}$ will be omitted. 
Let $D^{s-1}$ be an arbitrary partial derivative of order $s-1$. Without loss of generality, assume $D^{s-1}=\p_1D^{s-2}$ for some partial derivative $D^{s-2}$ of order $s-2$. Applying \eqref{na:tn} with $w=D^{s-1}u$ gives
\[
\begin{aligned}
\| \na D^{s-1}u\|_{L^2(\Omega)}^2
&\le \|\p_n \p_1D^{s-2}u\|_{L^2(\Omega)}^2+\|\p_{\tau_j}\p_1 D^{s-2}u\|_{L^2(\Omega)}^2+\| \chi_2 \na w\|_{L^2(\Omega)}^2.
\end{aligned}
\]
We thus have replaced one partial derivative with one normal and one tangential derivative. To continue, we commute $\p_n$ with $\p_1$ to have
\[
\begin{aligned}
\|\p_n \p_1D^{s-2}u\|_{L^2(\Omega)}^2&\le 2\|\p_1\p_n D^{s-2}u\|_{L^2(\Omega)}^2+2\|[\p_n, \p_1]D^{s-2}u\|_{L^2(\Omega)}^2\\
&\le 2\|\p_1\p_n D^{s-2}u\|_{L^2(\Omega)}^2+C\| u\|_{H^{s-1}(\Omega)}^2.
\end{aligned}
\]
Similarly for $\|\p_{\tau_j}\p_1 D^{s-2}u\|_{L^2(\Omega)}^2$ we obtain 
\[
\| \na D^{s-1}u\|_{L^2(\Omega)}^2\le 2\|\na\p_n D^{s-2}u\|_{L^2(\Omega)}^2+2\|\na\p_{\tau_j} D^{s-2}u\|_{L^2(\Omega)}^2+C\| u\|_{H^{s-1}(\Omega)}^2+\| \chi_2 u\|_{H^s(\Omega)}^2.
\]
Now applying \eqref{na:tn}  with $w=\p_n D^{s-2}u$ and $w=\p_\tau D^{s-2}u$ leads to 
\[
\begin{aligned}
\| \na D^{s-1}u\|_{L^2(\Omega)}^2&\le 2\|\p_n\p_n D^{s-2}u\|_{L^2(\Omega)}^2+2\|\p_{\tau_j}\p_n D^{s-2}u\|_{L^2(\Omega)}^2+2\|\p_n\p_{\tau_j} D^{s-2}u\|_{L^2(\Omega)}^2\\
&\quad+2\|\p_{\tau_j}\p_{\tau_j} D^{s-2}u\|_{L^2(\Omega)}^2+C\| u\|_{H^{s-1}(\Omega)}^2+\| \chi_2 u\|_{H^s(\Omega)}^2.
\end{aligned}
\]
Next we write $D^{s-2}=\p_j D^{s-3}$ with $j\in \{1,...,d\}$ and continue the process until no partial derivatives are left on the right-hand side, yielding 
\[
\| \na D^{s-1}u\|_{L^2(\Omega)}^2\le CZ^2(u)+C\| u\|_{H^{s-1}(\Omega)}^2+\| \chi_2 u\|_{H^s(\Omega)}^2.
\]
This combined with the interpolation inequality $\| u\|_{H^{s-1}}\le C\| u\|_{H^s}^\alpha\| u\|_{L^2}^{1-\alpha}$, $\alpha \in (0, 1)$ and a Young inequality implies the desired estimate \eqref{HZ}.
\end{proof}
By interpolation and Young's inequality, the last term on the right-hand side of \eqref{est:W} is bounded as
\[
C\Vert u(t)\Vert_{H^s(\Omega)}\Vert u(t)\Vert_{H^{s-1}(\Omega)}\le \gamma\Vert u(t)\Vert_{H^s(\Omega)}^2+C_\gamma \Vert u(t)\Vert_{L^2(\Omega)}^2
\]
for any $\gamma>0$. Using \eqref{HZ} and choosing $\gamma$ sufficiently small so that $A\gamma<\theta_1$, we deduce from \eqref{est:W} that
\bq\label{mix:est:sum1}
\begin{aligned}
&\frac{d}{dt}W^2(u)+\theta_1 W^2(u)\le C_2\Vert u(t)\Vert_{H^s(\Omega)}^2\Vert z(t)\Vert_{H^s(\Omega)}+C_2\Vert u(t)\Vert_{L^2(\Omega)}^2
\end{aligned}
\eq
for some $C_2>0$.
\subsection{Proof of Theorem \ref{theo:global}}
We define the bootstrap norm by
\eqref{bootstrapnorm}
\bq
\mathcal{N}(t) : =\sup_{0\le \tau\le t}\Big(\Vert z(\tau)\Vert_{H^s(\Omega)}^2+ M^2 e^{\frac{\theta_1}{2}\tau}\Vert u(\tau)\Vert_{L^2(\Omega)}^2+ M e^{\frac{\theta_1}{2}\tau}\Vert u(\tau)\Vert_{H^s(\Omega)}^2\Big)
\eq
for some large $M>0$  to be fixed and for $t<T^*$, the maximal time of existence which is positive thanks to the local existence theory, Theorem \ref{theo:lwp}. 
\begin{prop}\label{prop:btO}
There exist positive constants $\eps, C_*$, depending only on $\theta_0$ and $\Vert y_*\Vert_{H^{s+1}}$, such that whenever $\mathcal{N}(0)<\eps$, we have $\mathcal{N}(t)\le C_*\mathcal{N}(0)$ for all $t<T^*$. 
\end{prop}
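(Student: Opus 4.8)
The plan is a continuity (bootstrap) argument built on the three a priori inequalities \eqref{est:z:O}, \eqref{u:L2:O}, \eqref{mix:est:sum1} together with the norm equivalence \eqref{HZ}. Since $z\in C([0,T^*);H^s(\Omega))$ and $u=\P z$, the function $t\mapsto\mathcal{N}(t)$ is continuous and nondecreasing; discarding the trivial case $z_0\equiv0$ (for which $z\equiv0$), for any constant $C_*>1$ the exit time
\[
T_0:=\sup\bigl\{\,t\in[0,T^*):\ \mathcal{N}(\tau)\le C_*\mathcal{N}(0)\ \text{for all}\ \tau\le t\,\bigr\}
\]
is strictly positive. On $[0,T_0]$ one then has the working bounds $\|z(\tau)\|_{H^s}^2\le \delta:=C_*\mathcal{N}(0)\le C_*\eps$, $\|u(\tau)\|_{L^2}^2\le M^{-2}e^{-\theta_1\tau/2}\delta$ and $\|u(\tau)\|_{H^s}^2\le M^{-1}e^{-\theta_1\tau/2}\delta$, and the whole game is to choose $M$, then $C_*$, then $\eps$, so that these hypotheses actually yield the strictly better conclusion $\mathcal{N}(t)\le\tfrac{C_*}{2}\mathcal{N}(0)$ on $[0,T_0]$; continuity of $\mathcal{N}$ and maximality of $T_0$ then force $T_0=T^*$, which is the claim.

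First I would close the two decay estimates for $u$. Inserting $\|z\|_{H^s}\le\sqrt\delta$ into \eqref{u:L2:O} and taking $\delta$ small (depending only on $\theta_0$ and $\|y_*\|_{H^{s+1}(\Omega)}$) so that the right-hand side is absorbed into half of the damping gives $\tfrac{d}{dt}\|u\|_{L^2}^2+\theta_0\|u\|_{L^2}^2\le0$, hence $\|u(t)\|_{L^2}^2\le e^{-\theta_0 t}\|u_0\|_{L^2}^2$; since $\theta_0>\theta_1/2$ by the very construction of $\theta_1$, this yields $M^2e^{\theta_1 t/2}\|u(t)\|_{L^2}^2\le M^2\|u_0\|_{L^2}^2\le\mathcal{N}(0)$. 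Next, inserting $\|z\|_{H^s}\le\sqrt\delta$ and the upper bound of \eqref{HZ} into \eqref{mix:est:sum1} and taking $\delta$ small enough to absorb the resulting $W^2(u)$-term into half of $\theta_1 W^2(u)$ gives $\tfrac{d}{dt}W^2(u)+\tfrac{\theta_1}{2}W^2(u)\le C\|u\|_{L^2}^2\le Ce^{-\theta_0 t}\|u_0\|_{L^2}^2$ for a constant $C$ depending only on $\theta_0$ and $\|y_*\|_{H^{s+1}(\Omega)}$. Integrating this scalar linear inequality (Duhamel, again using $\theta_0>\theta_1/2$) and then applying both sides of \eqref{HZ} leads to $\|u(t)\|_{H^s}^2\le C'e^{-\theta_1 t/2}\|u_0\|_{H^s}^2$, hence $Me^{\theta_1 t/2}\|u(t)\|_{H^s}^2\le C'M\|u_0\|_{H^s}^2\le C'\mathcal{N}(0)$, where $C'$ depends only on $\theta_0$ and $\|y_*\|_{H^{s+1}(\Omega)}$ — crucially, not on $M$.

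It remains to close the estimate for $z$, which does not decay. From \eqref{est:z:O} with $\delta\le1$ (treating the possible vanishing of $\|z\|_{H^s}$ by the usual regularization $(\|z\|_{H^s}^2+\eta)^{1/2}$) one obtains only $\tfrac{d}{dt}\|z\|_{H^s}\le C\|u\|_{H^s}$, so, integrating and using the decay of $\|u\|_{H^s}$ just proved,
\[
\|z(t)\|_{H^s}\le\|z_0\|_{H^s}+C\int_0^t\|u(\tau)\|_{H^s}\,d\tau\le\|z_0\|_{H^s}+\frac{C}{\sqrt M}\sqrt{C_*\mathcal{N}(0)},
\]
the integral converging thanks to the weight $e^{\theta_1\tau/2}$ in $\mathcal{N}$; squaring, $\|z(t)\|_{H^s}^2\le2\mathcal{N}(0)+\tfrac{CC_*}{M}\mathcal{N}(0)$. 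Collecting the three pieces, $\mathcal{N}(t)\le\bigl(3+C'+\tfrac{CC_*}{M}\bigr)\mathcal{N}(0)$ on $[0,T_0]$. Now fix $M$ large (depending only on $C$, hence on $\theta_0$ and $\|y_*\|_{H^{s+1}(\Omega)}$) so that $C/M\le\tfrac14$; then fix $C_*:=4(3+C')$, so that $3+C'+\tfrac{C_*}{4}\le\tfrac{C_*}{2}$; and finally fix $\eps$ small enough that $C_*\eps$ (hence $\delta$) meets the finitely many smallness thresholds used above. This gives $\mathcal{N}(t)\le\tfrac{C_*}{2}\mathcal{N}(0)<C_*\mathcal{N}(0)$ on $[0,T_0]$, whence $T_0=T^*$ and $\mathcal{N}(t)\le C_*\mathcal{N}(0)$ for all $t<T^*$.

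The real obstacle is precisely this last step. Because $z$ is merely transported it cannot decay, so $\|z(t)\|_{H^s}$ is controlled only by $\|z_0\|_{H^s}$ plus $\int_0^t\|u\|_{H^s}$; for the scheme to close, that integral must both converge in time — which is why $\|u\|_{H^s}^2$ is weighted by $e^{\theta_1 t/2}$ in $\mathcal{N}$ — and feed back into $\mathcal{N}$ with a prefactor $O(1/M)<1$ — which is why $\|u\|_{H^s}^2$ (resp.\ $\|u\|_{L^2}^2$) carries the large weight $M$ (resp.\ $M^2$) in $\mathcal{N}$. Everything else is bookkeeping, the two points to watch being that $C'$ (hence $C_*$) is fixed before $\eps$ and is independent of $M$, and that the $H^s$ decay rate $\theta_1/2$ of $u$ is not degraded by the Duhamel convolution against the faster $L^2$ rate $e^{-\theta_0 t}$, which is ensured by $\theta_0>\theta_1/2$.
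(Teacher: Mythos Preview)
Your argument is correct and is essentially the same bootstrap as the paper's, run with a slightly different bookkeeping. The paper packages everything into the single inequality $\mathcal{N}(t)\le C_0\mathcal{N}(0)+C_0\mathcal{N}(t)^{3/2}$ (keeping the cubic term explicit and combining $M\|u\|_{L^2}^2+W^2(u)$ into one Gronwall), then invokes continuous induction; you instead linearize under the bootstrap hypothesis $\|z\|_{H^s}\le\sqrt\delta$, close the $L^2$ and $W^2$ decays sequentially via Duhamel, and tune $M$, $C_*$, $\eps$ by hand. Both routes rest on the identical inputs \eqref{est:z:O}, \eqref{u:L2:O}, \eqref{mix:est:sum1}, \eqref{HZ} and on the same mechanism---$M$ buys the $M^{-1/2}$ prefactor in the $z$-estimate so the non-decaying piece can close---so the difference is organizational rather than mathematical.
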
 
\begin{proof}
As in the proof of Proposition \ref{prop:bootstrap}, it suffices to prove that 
\begin{equation}\label{bt:ineq} \mathcal{N}(t) \le C_0\mathcal{N}(0) + C_0 \mathcal{N}(t)^{3/2} \end{equation}
for all $t<T^*$. Repeating verbatim the proof of \eqref{bd-zzz} we obtain from \eqref{est:z:O} that
\bq\label{bt:z:O}
 \| z(t)\|_{H^s}^2 
\le \| z(0)\|_{H^s}^2 + C_3M^{-1/2} (1 + \mathcal{N}(t)^{1/2}) \mathcal{N}(t).
\eq
Next we combine the estimates \eqref{u:L2:O} and \eqref{mix:est:sum1} and argue as in \eqref{bt:u} to have
\bq\label{bt:u:O}
M^2e^{\theta_1 t/2}\Vert u(t)\Vert_{L^2}^2 + M e^{\theta_1 t/2}W^2(u) 
\le \Big[ M^2\Vert u(0)\Vert_{L^2}^2 + M \| u(0)\|_{H^s}^2 \Big]  + C_5 \mathcal{N}(t)^{3/2}.
\eq
Then \eqref{bt:ineq} follows from \eqref{bt:z:O}, \eqref{bt:u:O} and \eqref{HZ}.
\end{proof}
Finally, the proof of Theorem \ref{theo:globalT}  follows along the same lines as that of Theorem \ref{theo:global}.

~\\
{\em Acknowledgement:} The authors thank Yann Brenier for introducing them the AHT model studied in this paper, Robert McCann for informing them the references \cite{Macthesis,Mac1}, and Dong Li for constructive comments. HN's research was supported by NSF grant DMS-1907776. TN's research was supported in part by the NSF under grant DMS-1764119 and by the 2018-2019 AMS Centennial Fellowship. Part of this work was done while TN was visiting the Department of Mathematics and the Program in Applied and Computational Mathematics at Princeton University.  

\bibliographystyle{abbrv}

\begin{thebibliography}{10}


\bibitem{AHT}
S. Angenent, S. Haker, A. Tannenbaum. Minimizing flows for the Monge--Kantorovich problem. {\it SIAM J. Math. Anal.} 35, 61--97, 2003.

\bibitem{BCD}
H. Bahouri, J.-Y Chemin, and Rapha{\"e}l Danchin.
\newblock {\em Fourier analysis and nonlinear partial differential equations},
  volume 343 of {\em Grundlehren der Mathematischen Wissenschaften [Fundamental
  Principles of Mathematical Sciences]}.
\newblock Springer, Heidelberg, 2011.

\bibitem{Bony}
J.-M.  Bony.
\newblock Calcul symbolique et propagation des singularit\'es pour les
  \'equations aux d\'eriv\'ees partielles non lin\'eaires.
\newblock {\em Ann. Sci. \'Ecole Norm. Sup. (4)}, 14(2):209--246, 1981.

\bibitem{BoyFab} F. Boyer, P. Fabrie. {\it Mathematical tools for the study of the incompressible Navier-Stokes equations and related models}.  Applied Mathematical Sciences, vol. 183, Springer, 2013.

\bibitem{Brenier91}
Y. Brenier. Polar factorization and monotone rearrangement of vector-valued functions. {\it Comm. Pure  Appl. Math.} 44(4), 375--417, 1991.

\bibitem{Brenier09}
Y. Brenier. Optimal transportation, convection, magnetic relaxation and generalized Boussinesq equations. {\it J. Nonlinear Sci.} 19(5), 547--570, 2009.

\bibitem{Brenier09-2}
Y. Brenier. On the hydrosolid and convective Darcy limits of the Navier-Stokes equations. {\it Chin. Ann. Math. Ser. B} 30(6), 683--696, 2009.

\bibitem{Bre1} Y. Brenier, Rearrangement, convection, convexity and entropy.
 {\em Philos. Trans. R. Soc. Lond. Ser. A Math. Phys. Eng. Sci.}  371  (2013),  no. 2005, 20120343, 14 pp.
 
  \bibitem{CCL}
   A. Castro, D. C\'ordoba, and D. Lear. Global existence of quasi-stratified solutions for the confined IPM equation.  To appear in {\it Arch. Ration. Mech. Anal.}, 2018.  
    \bibitem{Tarek}
  T.M Elgindi. On the asymptotic stability of stationary solutions of the inviscid incompressible porous medium equation. {\it Arch. Ration. Mech. Anal.} 225 (2), 573--599, 2017.
  
\bibitem{MajBer}
A. Majda, A. Bertozzi. {\it Vorticity and Incompresible flow.}  Cambridge Texts in Applied Mathematics, 27. Cambridge University Press, Cambridge, 2002.

\bibitem{Macthesis} R. McCann, A convexity theory for interacting gases and equilibrium crystals.
{\em Thesis (Ph.D.)-Princeton University.} ProQuest LLC, Ann Arbor, MI,  1994. 163 pp.

\bibitem{Mac1} R. McCann, A convexity principle for interacting gases.
{\em Adv. Math.}  128  (1997),  no. 1, 153--179.


\end{thebibliography}

\end{document}